\newtheorem{theorem}{Theorem}
\newtheorem{lemma}{Lemma}
\newtheorem{proposition}{Proposition}
\newtheorem{corollary}{Corollary}
\theoremstyle{definition}
\theoremstyle{remark}
\newtheorem{remark}{Remark}
\def \a{\alpha }
\def \b {\beta}
\newcommand{\R}[1]{\mathcal R^{({#1})}}
\newcommand{\V}[1]{\mathcal V^{({#1})}}
\newcommand{\A}[1]{\mathcal A^{({#1})}}
\newcommand{\B}[1]{\mathcal B^{({#1})}}
\newcommand{\W}[1]{\mathcal W^{({#1})}}
\newcommand{\sltwo}{\mathfrak{sl}_2}
\newcommand{\asltwo}{\widehat{\sltwo}}
\newcommand{\lksl}{L_k(\sltwo)}
\newcommand{\PiO}{\Phi(0)}
\newcommand{\LL}[2]{\mathcal L_{#1}^{(#2)}}
\newcommand{\bea}{\begin{eqnarray}}
\newcommand{\eea}{\end{eqnarray}}
\newcommand{\be}{\begin {equation}}
\newcommand{\ee}{\end{equation}}
\newcommand{\Z}{\Bbb Z}
\newcommand{\C}{\Bbb C}
\newcommand{\Q}{\Bbb Q}
\newcommand{\la}{\langle}
\newcommand{\ra}{\rangle}
\begin{document}

\title{ The vertex algebras $\mathcal R^{(p)}$ and $\V{p}$  }

\author[] {Dra\v zen Adamovi\' c,
Thomas Creutzig,
Naoki Genra and
Jinwei Yang}

\begin{abstract}
The vertex algebras $\mathcal V^{(p)}$ and $\mathcal R^{(p)}$ introduced in \cite{A} are very interesting relatives of the famous triplet algebras of logarithmic CFT.
The algebra $\mathcal V^{(p)}$ (respectively, $\mathcal R^{(p)}$) is a large extension of the simple affine vertex algebra $\lksl$ (respectively, $\lksl$ times a Heisenberg algebra), at level $k=-2+1/p$ for positive integer $p$. Particularly, the algebra $\mathcal V^{(2)}$ is the simple small $N=4$ superconformal vertex algebra with $c=-9$, and $\mathcal R^{(2)}$ is $L_{-3/2}(\mathfrak{sl}_3)$. In this paper, we derive structural results of these algebras and prove various conjectures coming from representation theory and physics.

We show that $SU(2)$ acts as automorphisms on $\V{p}$ and
we decompose $\V{p}$ as an $\lksl$-module and $\R{p}$ as an $L_k(\mathfrak{gl}_2)$-module. The decomposition of $\V{p}$ shows that $\V{p}$ is the large level limit of a corner vertex algebra appearing in the context of $S$-duality.
We also show that the quantum Hamiltonian reduction of $\V{p}$ is the logarithmic doublet algebra $\A{p}$ introduced in \cite{AdM-doublet}, while the reduction of $\R{p}$ yields the $\B{p}$-algebra of \cite{CRW}. Conversely, we realize $\V{p}$ and $\R{p}$ from $\A{p}$ and $\B{p}$ via a procedure that deserves to be called inverse quantum Hamiltonian reduction.
As a corollary, we obtain that the category $KL_{k}$ of ordinary $\lksl$-modules at level $k=-2+1/p$ is a rigid vertex tensor category equivalent to a twist of the category $\text{Rep}(SU(2))$. This finally completes rigid braided tensor category structures for $\lksl$ at all complex levels $k$.

We also establish a uniqueness result of certain vertex operator algebra extensions and use this result to prove that both $\R{p}$ and $\B{p}$ are certain non-principal $\mathcal W$-algebras of type $A$ at boundary admissible levels. The same uniqueness result also shows that $\R{p}$ and $\B{p}$ are the chiral algebras of Argyres-Douglas theories of type $(A_1, D_{2p})$ and $(A_1, A_{2p-3})$.
\end{abstract}

\maketitle

\date{}


\bibliographystyle{amsalpha}

\theoremstyle{plain}
\newtheorem*{introthm}{Theorem}
\newtheorem{obs}{Observation}
\newtheorem{thm}{Theorem}[section]
\newtheorem{prop}[thm]{Proposition}
\newtheorem{lem}[thm]{Lemma}
\newtheorem{cor}[thm]{Corollary}
\newtheorem{conj}[thm]{Conjecture}

\theoremstyle{definition}
\newtheorem{defi}[thm]{Definition}
\newtheorem{rem}[thm]{Remark}

\newcommand {\AL}{AL}
\newcommand {\CC}{\mathbb{C}}
\newcommand {\ZZ}{\mathbb{Z}}
\newcommand {\tr}{\text{tr}}
\newcommand {\ch}{\text{ch}}
\newcommand {\sch}{\text{sch}}
\newcommand {\cW}{\mathcal{W}}
\newcommand {\cX}{\mathcal{X}}
\newcommand {\cB}{\mathcal{B}}
\newcommand {\cS}{\mathcal{S}}
\newcommand {\cO}{\mathcal{O}_p}
\newcommand {\voa}{vertex operator algebra}
\newcommand {\voas}{vertex operator algebras}
\newcommand{\Sing}{M(p)}
\newcommand{\Trip}{W(p)}
\newcommand{\vak}{\bf 1}

\newcommand{\hopflink}{{\text{\textmarried}}}

\renewcommand{\baselinestretch}{1.2}

\maketitle

\section{Introduction}

The singlet \cite{A-singlet, AdM-singlet, CM, CMW}, doublet \cite{AdM-doublet}, triplet \cite{AdM-triplet, AdM-zhu, FGST, FGST2, TW},   and $\B{p}$ \cite{CRW, ACKR} algebras   are the best understood examples of \voas{} with non semi-simple represenration theory and they are of significant importance for logarithmic conformal field theory \cite{CR-log, CG, AdM-LCFT}.    These algebras are large extensions of the Virasoro \voa{} $L^{\rm{Vir}}(c_{1, p},0)$ at central charge $c_{1, p} = 1 - 6(p-1)^2/p$ for $p$ in $\Z_{\geq 2}$. The Virasoro algebra in turn is the quantum Hamiltonian reduction of the affine \voa{} $\lksl$ of $\sltwo$ at level $k=-2+\frac{1}{p}$. In this work, we realize and study \voas{} whose quantum Hamiltonian reductions are these well-known singlet, triplet and $\B{p}$-algebras. These algebras provide important sources of logarithmic conformal field theories and we will investigate their representation theory in future work. The importance of the present work is to resolve various open questions motivated from four dimensional physics, i.e. questions in Argyres-Douglas theories and in $S$-duality. Along the way, we discover a few additional interesting structure, which we shall describe in detail. First we introduce the definitions and the main properties of these important algebras.

\subsection{The $\V{p}$-algebra}

The $\V{p}$-algebra introduced in \cite{A} is a certain abelian intertwining algebra that we shall study first. Let us briefly recall its definition (see \eqref{defVp} for the detail): The $\V{p}$-algebra is a subalgebra of $M \otimes F_{\frac{p}{2}}$ where $M$ is the Weyl \voa{} (also often called the $\beta\gamma$-system) and
$F_{\frac{p}{2}}$ is the abelain intertwining algebra associated to the weight lattice of $\sltwo$ rescaled by $\sqrt{p}$. It is characterized as the kernel of a screening operator $\widetilde Q$ (see (\ref{scr-gen})):
\[
\V{p} = \mbox{Ker}  _{ M \otimes F_{\frac{p}{2}} } \widetilde{Q}.
\]

We think of $\V{p}$ as an analogue of the doublet algebra $\A{p}$ introduced in \cite{AdM-doublet}. The doublet algebra is an abelian intertwining algebra with $SL(2, \C)$ acting as automorphisms \cite{ALM} and it is a large extension of the Virasoro algebra at central charge $c_{1, p}$. Its even subalgebra is the famous triplet algebra. We elaborate various relations between $\A{p}$ and $\V{p}$. Firstly, our
Corollary \ref{cor;VpviaAp} says that $\V{p}$ is a subalgebra of $\A{p} \otimes \Pi(0)^\frac{1}{2}$, where $\Pi(0)^\frac{1}{2}$ is a certain extension along a rank one isotropic lattice of a rank two Heisenberg \voa. $\V{p}$ is then characterized as the kernel of another screening operator $S$ (see (\ref{operatorS})):
\[
 \V{p} = \mbox{\rm{Ker}}_{\A{p} \otimes \Pi(0)^{\frac{1}{2}} } S.
\]

Set $k=-2+\frac{1}{p}$ and $p \in \Z_{\geq 1}$. We denote by $\LL{n}{p}$ the simple highest-weight module of $\lksl$ of highest-weight $n\omega$ with $\omega$ the fundamental weight of $\sltwo$. We also use the short-hand notation $\rho_n=\rho_{n\omega}$ for the integrable $\sltwo$-modules. One of our main aims was to prove

\begin{theorem} \label{thm;sl2decomp} \textup{(Theorem \ref{thm:Vp-dec} and Corollary \ref{sl2der})} \newline
The Lie algebra $\sltwo$ acts on $\V{p}$ as derivations and $SL(2, \C)$ is a group of automorphisms. Moreover,
$\V{p}$ decomposes as an $\sltwo \otimes \lksl$-module as
 $$\V{p}  =  \bigoplus_{n=0} ^{\infty} \rho_n \otimes \LL{n}{p}. $$
\end{theorem}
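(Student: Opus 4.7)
The plan is to establish the $\lksl$-module decomposition of $\V{p}$ first, and then derive the $\sltwo$-action and the $SL(2,\C)$-automorphism group as consequences, which matches the indicated split between Theorem \ref{thm:Vp-dec} and Corollary \ref{sl2der}.

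Since $\V{p}=\ker_{M\otimes F_{p/2}}\widetilde Q$, and since a Wakimoto-type embedding realizes $\lksl$ inside $M\otimes\pi_0$ (with $\pi_0$ the Heisenberg subalgebra of $F_{p/2}$) already in $\ker\widetilde Q$, the algebra $\V{p}$ is naturally an $\lksl$-module containing $\lksl=\LL{0}{p}$ as a submodule. The first step is to exhibit, for each $n\geq 0$, an explicit $\lksl$-highest-weight vector $u_n\in\V{p}$ of weight $n\omega$, constructed as a normal-ordered product involving the lattice vertex operator $e^{n\sqrt{p}\,\omega}$. One verifies $u_n\in\ker\widetilde Q$ from the OPE of $\widetilde Q$ with the lattice exponential, and identifies the submodule generated by $u_n$ with $\LL{n}{p}$; irreducibility at the boundary admissible level $k=-2+1/p$ follows from the maximality of the corresponding Wakimoto modules, a standard argument at these special levels.

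Second, to pin down the multiplicity $n+1=\dim\rho_n$ and to establish completeness, I would compare graded characters. The character of $\V{p}$ can be computed directly as the character of $\ker\widetilde Q$, via either a Felder-type resolution of $M\otimes F_{p/2}$ or explicit counting, producing a theta-like series. Matching this against $\sum_{n\geq 0}(n+1)\,\ch\LL{n}{p}$ establishes simultaneously that exactly the modules $\LL{n}{p}$ appear and that each appears with multiplicity $n+1$. An equivalent route is to recognize $\V{p}$ as a conformal extension of $\lksl$ along $\{\LL{n}{p}\}_{n\geq 0}$ whose fusion structure is (up to twist) that of $\mathrm{Rep}\,SU(2)$, which forces the multiplicities.

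Third, the $\sltwo$-action comes from the isotypic component $\rho_1\otimes\LL{1}{p}$: its top graded piece contributes three weight-one primary fields in $\V{p}$ whose zero modes are verified directly, via an OPE computation in $M\otimes F_{p/2}$, to commute with $\lksl$ and to satisfy $\sltwo$-commutation relations. Zero modes of weight-one fields act as derivations automatically, and local finiteness of the action, ensured by the fact that each graded piece of $\V{p}$ is a finite direct sum of finite-dimensional $\sltwo$-modules with integer weights, promotes the Lie-algebra action to an algebraic $SL(2,\C)$-action by automorphisms.

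The main obstacle is the completeness step: constructing the vectors $u_n$ and checking the screening-OPEs is routine, but proving that $\V{p}$ has no further $\lksl$-composition factors and that the $\LL{n}{p}$-isotypic component is a direct sum rather than a non-split extension requires either a delicate theta-function identity comparing the two characters or the full strength of the module theory of $\lksl$ at this admissible level. I expect the character identity to be the most technically demanding piece of the argument, with everything else following cleanly from it.
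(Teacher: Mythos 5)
Your overall strategy (explicit highest-weight vectors in $\ker\widetilde Q$ plus a character comparison) is genuinely different from the paper's, which never computes a character: the paper first proves $\V{p}=\mbox{Ker}_{\A{p}\otimes\Pi(0)^{1/2}}S$ (Corollary \ref{cor;VpviaAp}), imports the known $\sltwo\otimes\mathrm{Vir}$-decomposition $\A{p}=\bigoplus_n\rho_n\otimes L^{\rm{Vir}}(c_{1,p},h_{1,n+1})$ from \cite{AdM-triplet,AdM-doublet}, and then shows via a singular-vector uniqueness lemma (Lemma \ref{ker}) that $\mbox{Ker}_{L^{\rm{Vir}}(c_{1,p},h_{1,n+1})\otimes\Pi(0)^{1/2}}S\cong\LL{n}{p}$, so the decomposition of $\V{p}$ falls out summand by summand with the $\rho_n$-structure already attached. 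Your route leaves exactly the hardest points unproved: you defer completeness and semisimplicity of the $\lksl$-decomposition to an unspecified theta-function identity, and even granting that identity you would only obtain the multiplicity $n+1$ as a number, not the $\rho_n$-module structure on the multiplicity space, which is part of the statement.

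More seriously, your third step is wrong as stated. For $p\geq 2$ the component $\rho_1\otimes\LL{1}{p}$ has lowest conformal weight $\frac{3p}{4}\geq\frac{3}{2}$ and its top space is the four-dimensional $\rho_1\otimes\rho_1$ (these are the four generators \eqref{eq:genVp}), so it contributes no weight-one primary fields; the only weight-one fields of $\V{p}$ are $e,f,h$ of $\lksl$ itself, whose zero modes give the horizontal $\sltwo$ acting on the second tensor factors $\LL{n}{p}$, not the external $\sltwo$ acting on the multiplicity spaces $\rho_n$. The external action cannot be realized by zero modes of weight-one elements of $\V{p}$. In the paper it is inherited from the $\sltwo$-action on the doublet algebra $\A{p}$ established in \cite{ALM,AdM-doublet}: there $e$ acts as the screening operator $Q=e^{\gamma}_0$ (a derivation, but not the zero mode of a weight-one vector of $\A{p}$), $h$ acts as $\gamma(0)/p$, and $f$ acts as $-\Psi^{-1}Q\Psi$ for an order-two automorphism $\Psi$; this action extends to $\A{p}\otimes\Pi(0)^{1/2}$ and restricts to the subalgebra $\V{p}$. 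You would need to replace your step three with such a construction (or with $Q$-type screening operators acting directly on $M\otimes F_{\frac{p}{2}}$) before the derivation and $SL(2,\C)$ statements can be established.
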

This resolves the conjecture of \cite{C} stated at the end of Section 1.2 of that work.
There are then several small useful results that we establish about $\V{p}$,
\begin{enumerate}
\item
Corollary \ref{strong} tells us that
 $\V{p}$ is strongly generated by $ x= x(-1) {\bf 1} \otimes 1$, $x \in \{ e, f, h \} $ and the four vectors stated in \eqref{eq:genVp}.
 \item
Proposition \ref{simple} tells us that
 $\V{p}$ is a simple abelian intertwining algebra.
 \item
 Corollary \ref{cor:int} characterizes $\V{p}$ as the subalgebra of $\A{p}   \otimes \Pi(0)^{\frac{1}{2}}$ that is integrable with respect to the $\sltwo$-action of the horizontal subalgebra of $\lksl$,
$$ \V{p}  =  \left(  \A{p}   \otimes \Pi(0)^{\frac{1}{2}} \right)^{\rm{int}}.$$
\item Let us also note that $\V{p}$   has the following structure:
\begin{enumerate}
\item If $p \equiv 2 \ (\mbox{mod}\ 4)$,   $\V{p}$ is a $\frac{1}{2}\Z_{\ge 0}$-graded vertex operator superalgebra.
\item If $p \equiv  0 \ (\mbox{mod}\ 4)$,  $\V{p}$ is a  $ \Z_{\ge 0}$-graded vertex operator algebra.
\item If $p \equiv 1, 3\ (\mbox{mod}\ 4)$, $\V{p}$ is an abelian intertwining algebra.
\end{enumerate}
\end{enumerate}

\subsection{The $\R{p}$-algebra}
The vertex algebra $\R{p}$ appeared in \cite{A} is motivated by the free-field realization of the affine vertex algebra $L_{-3/2} (\mathfrak{sl}_2)$ that is isomorphic to $\mathcal R^{(2)}$.  These algebras are also studied in \cite{C}, where they are conjecturally identified as certain affine vertex algebras or vertex algebras for Argyres-Douglas theories.

The vertex algebra $\R{p}$ is defined as a subalgebra of $\V{p} \otimes F_{-\frac{p}{2}}$ where $F_{-\frac{p}{2}}$ is the abelain intertwining algebra associated to the weight lattice of $\sltwo$ rescaled by $\sqrt{-p}$. It is generated under operator products by $\lksl \otimes M(1)$ ($M(1)$ is the Heisenberg \voa) together with four vectors stated in \eqref{for-e12-p}. The $\R{p}$-algebra is related to the $\B{p}$-algebra of \cite{CRW}, which is characterized as
\[
 \B{p} = \left(\A{p} \otimes F_{-\frac{p}{2}} \right)^{U(1)}.
\]
The $\R{p}$-algebra is mainly studied in Section \ref{sec:Rp}. Most properties are inherited from $\V{p}$ and we list the main results as follows:
\begin{enumerate}
\item (Corollary \ref{r=rtilde})
\[
 \R{p} = \left(\V{p} \otimes F_{-\frac{p}{2}} \right)^{U(1)}
\]
and especially $\R{p}$ is simple.
\item As $\lksl \otimes  M(1)$-module
\[
{ \R{p}} \cong  \bigoplus_{\ell \in \Z}  \bigoplus_{s=0}^{\infty} \LL{\vert \ell  \vert + 2 s }{p} \otimes M_\varphi(1, -\ell)
\]
with $ M_\varphi(1, -\ell)$ certain Fock modules.
\item $\R{p} = \mbox{Ker} _{  \B{p} \otimes \Pi(0)^{\frac{1}{2}} } S $
\item
 (Corollaries  \ref{r=rtilde} and \ref{cor:int})
$
{\R{p}} =  \left(  \B{p} \otimes \Pi(0)^{\frac{1}{2}}\right)^{\rm{int}}$
 \end{enumerate}

\subsection{Tensor categories related to $\rm{KL}_k(\sltwo)$}

Let $V$ be a \voa{} and $\mathcal C$ a category of $V$-modules. A crucial problem is whether $\mathcal C$ has a (rigid) vertex tensor category structure. Having such a tensor category facilitates proving structural results as e.g. an effective theory of vertex operator superalgebra extensions \cite{HKL, CKM, CKM2}
and orbifolds \cite{M, M2}. As will be explained in the next subsection, we are able to employ our vertex tensor category findings together with just mentioned theory to prove powerful uniqueness results of \voa{} structures.

Let $\mathfrak{g}$ a simple Lie algebra, $k$ in $\C$ and $\rm{KL}_k(\mathfrak{g})$ be the category of ordinary modules for the simple affine \voa{} $L_k(\mathfrak{g})$ of $\mathfrak g$ at level $k$. A general aim is to establish rigid vertex tensor category structure on this category for all $\mathfrak g$ and $k$. Generically, that is if $k \notin \Q_{\geq -h^\vee}$, this has been achieved in seminal work by Kazhdan and Lusztig \cite{KL1}--\cite{KL5}.  For $k \in \Z_{\geq 1}$ this follows from \cite{H3, H4}, while for $k$ an admissible level the vertex tensor category structure has been proven to exist in \cite{CHY} and rigidity in the simply-laced case in \cite{C2}. In the accompanying work \cite{CY}, it is proven that semi-simplicity of
$\rm{KL}_k(\mathfrak{g})$ implies the existence of vertex tensor category. This result together with a main Theorem of \cite{M} and our Theorem \ref{thm;sl2decomp} implies that $\rm{KL}_k(\sltwo)$ for $k=-2+\frac{1}{p}$ and $p$ in $\Z_{\geq 1}$ is a rigid vertex tensor category and as such braided equivalent to a twist by some abelian $3$-cocycle of $\rm{Rep}(SU(2))$.
Together with \cite{KL1}--\cite{KL5}, \cite{H3, H4, CHY}, this result completes the case $\sltwo$ and thus
\begin{corollary}
For all $k \in \C$, the category of ordinary modules $\rm{KL}_k(\sltwo)$ is a rigid vertex tensor category.
\end{corollary}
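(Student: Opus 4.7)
The plan is a case analysis on $k$ in which the new input is precisely the family $k = -2 + 1/p$ for $p \in \Z_{\geq 1}$ supplied by Theorem \ref{thm;sl2decomp}. For $k \notin \Q_{\geq -2}$, rigidity of $\rm{KL}_k(\sltwo)$ is the Kazhdan--Lusztig equivalence \cite{KL1}--\cite{KL5}; for $k \in \Z_{\geq 1}$ it follows from Huang's theorem on rational $C_2$-cofinite \voas{} \cite{H3, H4}; for admissible levels the vertex tensor category structure is established in \cite{CHY} and rigidity in the simply laced case in \cite{C2}. For $\sltwo$ these three ranges together exhaust $\C$ except for the new family, so the only remaining task is to prove rigidity at $k = -2 + 1/p$.

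At such a level I would argue in three steps. First, invoke Theorem \ref{thm;sl2decomp} to produce the simple (abelian intertwining) extension $\lksl \hookrightarrow \V{p}$ with the explicit branching
\[
\V{p} \;=\; \bigoplus_{n \geq 0} \rho_n \otimes \LL{n}{p},
\]
so that every simple module $\LL{n}{p}$ in $\rm{KL}_k(\sltwo)$ appears as a direct summand of $\V{p}$ with multiplicity space $\rho_n$. Second, apply the main Theorem of \cite{M}: the $SL(2,\C)$-equivariant extension $\V{p}$ packages every simple of $\rm{KL}_k(\sltwo)$ into a single controlled algebra of the form demanded by that theorem, yielding semisimplicity of $\rm{KL}_k(\sltwo)$. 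Third, invoke the main Theorem of the accompanying paper \cite{CY}, which promotes semisimplicity of $\rm{KL}_k(\mathfrak{g})$ to the existence of a braided vertex tensor category structure, and apply it to $\mathfrak{g} = \sltwo$ at our level.

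Given all of this, rigidity and the identification as a twist of $\rm{Rep}(SU(2))$ follow from the branching of Step 1: the Grothendieck ring of $\rm{KL}_k(\sltwo)$ is generated by the classes $[\LL{n}{p}]$ obeying the $\sltwo$ Clebsch--Gordan rules, and semisimplicity forces these to exhaust the simples; the classification of braided tensor structures with such a fusion ring then forces $\rm{KL}_k(\sltwo)$ to be braided equivalent to $\rm{Rep}(SU(2))$ twisted by an abelian $3$-cocycle, which is rigid. The main obstacle I anticipate is Step 2: verifying the hypotheses of \cite{M} requires interpreting $\V{p}$ as a commutative algebra in a braided tensor structure on $\rm{KL}_k(\sltwo)$ that is not yet known to be rigid, which is delicate when $p$ is odd so that $\V{p}$ is only an abelian intertwining algebra rather than a genuine vertex operator (super)algebra; one resolution would be to pass first to the integer-graded subalgebra, apply the extension theory of \cite{HKL, CKM} there, and recover the general case via orbifold and deorbifold theory under the $SU(2)$-action provided by Theorem \ref{thm;sl2decomp}.
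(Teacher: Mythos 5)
Your case analysis for $k \notin \Q_{\geq -2}$, $k \in \Z_{\geq 1}$ and $k$ admissible matches the paper, and the three ingredients you name for the remaining levels $k = -2+\frac{1}{p}$ (the decomposition of $\V{p}$, the orbifold theorem of \cite{M}, and the semisimplicity-implies-tensor-category result of \cite{CY}) are indeed the ones used in Corollary \ref{cor:tc}. However, your Step 2 has the logic backwards, and as written the argument is circular. Theorem \ref{mcrae}(1) only asserts that the single object $V=\V{p}$ is semisimple as an $SU(2)\otimes\lksl$-module --- which is essentially the content of Theorem \ref{thm;sl2decomp} itself --- and says nothing about the absence of nonsplit extensions between the $\LL{n}{p}$ inside the full category of ordinary modules. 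Theorem \ref{mcrae}(2), which is what identifies the category with ${\rm Rep}_{A,F,\Omega}(SU(2))$, takes as a \emph{hypothesis} that $V^G=\lksl$ already has a braided tensor category of modules containing all the $\LL{n}{p}$. So you cannot use \cite{M} to produce semisimplicity and then feed semisimplicity into \cite{CY} to produce the tensor structure: \cite{M} needs the tensor structure as input.

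The missing ingredient is an independent proof that $\rm{KL}_k(\sltwo)$ is semisimple at $k=-2+\frac{1}{p}$. The paper gets this from Kumar's Ext-vanishing theorem for Kac--Moody algebras \cite{K}, a purely Lie-theoretic input: $\rm{KL}_k$ is semisimple with simple objects the $\LL{n}{p}$. The correct order of deductions is then: \cite{K} gives semisimplicity; \cite{CY} upgrades semisimplicity to a braided vertex tensor category structure; Theorem \ref{thm;sl2decomp} exhibits $\V{p}$ as a simple abelian intertwining algebra decomposing over $SU(2)\otimes\lksl$ with every simple object of $\rm{KL}_k$ appearing; and only then does Theorem \ref{mcrae}(2) apply, yielding $\rm{KL}_k \cong {\rm Rep}_{A,F,\Omega}(SU(2))$ and hence rigidity. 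Your closing remarks about the fusion ring and about the $p$ odd case are not needed once the theorem of \cite{M} is invoked with the correct hypotheses, since that theorem is formulated directly for simple abelian intertwining algebras with a compact automorphism group.
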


Let $\rm{KL}_k(\sltwo)^{\rm{even}}$ be the full tensor subcategory whose simple objects are the $\LL{2n}{p}$. We also prove that $\rm{KL}_k(\sltwo)^{\rm{even}}\cong \rm{Rep}(SO(3))$ as symmetric tensor categories.

A corollary of the vertex tensor category structure is that we have many simple currents as discussed in Subsection \ref{sec:sc}. For example,
since $\R{p}$ is realized as a $U(1)$-orbifold of some larger abelian intertwining algebra one gets that the $\R{p}$-modules appearing in the decomposition are all simple currents due to results of \cite{M, CKLR}.

\subsection{Uniqueness of \voa{} structure}

Given two \voas{} $V$ and $W$ that share common properties, e.g. they have the same character or they have the same type of strong generators or
 they are isomorphic as modules for some common subalgebra. In such a case one usually would like to know if these two \voas{} are actually isomorphic leading to the general question: Under which assumptions can we guarantee that two \voas{} are isomorphic?
  For example a simple affine \voa{} is uniquely specified by the Lie algebra structure on its weight one subspace together with the invariant bilinear form restricted to the weight one subspace. Similarly \voas{} that are strongly generated by fields in weight one and $3/2$ are also uniquely specified by certain structure \cite{ACKL}.

We shall apply the correspondence between the \voa{} extensions and the commutative and associative algebra objects in the vertex tensor category \cite{HKL}.
We first use that the $U(1)$-orbifold of $\A{1}$ is nothing but the rank one Heisenberg \voa{} to deduce that a certain object in (a completion of) $\rm{Rep}(SO(3))$ can be given a unique simple commutative and associative algebra structure. Secondly this implies the uniqueness of corresponding extensions of $\lksl$, see Theorem \ref{thm:uniqueness}. Since simple current extensions also have the uniqueness property, we get the conclusion that a simple \voa{} $\mathcal X$ that is isomorphic as an $\lksl \otimes M(1)$-module to $\R{p}$ must even be isomorphic to $\R{p}$ as a \voa. A similar argument applies to $\B{p}$-algebras using the novel vertex tensor category results of the Virasoro algebra \cite{CFJRY}.
\begin{corollary}\rm{(Corollaries \ref{cor:uniquessRp} and \ref{cor:uniquessBp})} For $p$ in $\Z_{\geq 1}$ and $k=-2+\frac{1}{p}$, let $\mathcal X$ be a simple \voa such that $\mathcal X \cong \R{p}$
as an $\lksl \otimes M(1)$-module. Then $\mathcal X \cong \R{p}$ as \voas.

Analogously if a simple \voa{} $\mathcal Y\cong \B{p}$ as an $L^{\rm{Vir}}(c_{1, p},0) \otimes M(1)$-module, then $\mathcal Y\cong \B{p}$ as \voas.
\end{corollary}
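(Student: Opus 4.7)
The plan is to use the HKL correspondence \cite{HKL,CKM} between simple VOA extensions and simple commutative associative algebra objects in an appropriate vertex tensor category, together with the $\Z$-grading of $\R{p}$ coming from the Heisenberg subalgebra $M(1)$. The argument splits into two steps: first establishing uniqueness of the $U(1)$-fixed subalgebra, and then extending via the uniqueness of simple current extensions.

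For the first step, I would pass to the $U(1)$-fixed subalgebra $\mathcal{X}^0 \subset \mathcal{X}$, which as an $\lksl \otimes M(1)$-module is isomorphic to the $\ell=0$ slice $\bigoplus_{s=0}^\infty \LL{2s}{p} \otimes M_\varphi(1,0)$ of $\R{p}$. Under the symmetric tensor equivalence $\mathrm{KL}_k(\sltwo)^{\mathrm{even}} \cong \mathrm{Rep}(SO(3))$ established earlier in the paper, the VOA structure on $\mathcal{X}^0$ corresponds to a simple commutative associative algebra object in a completion of $\mathrm{Rep}(SO(3))$ (tensored with the Heisenberg piece). To show uniqueness of this algebra object I would exploit the $p=1$ degeneration: there $\A{1}$ is explicitly known and its $U(1)$-orbifold is the rank-one Heisenberg VOA, pinning down the algebra structure on the corresponding object in $\mathrm{Rep}(SO(3))$ up to isomorphism; this structure is then transported to general $p$ via the categorical equivalence. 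This is precisely the content of Theorem \ref{thm:uniqueness}, yielding $\mathcal{X}^0 \cong \R{p,0}$ as VOAs, where $\R{p,0}$ denotes the $U(1)$-fixed subalgebra of $\R{p}$.

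For the second step, the nonzero $\ell$ summands in the decomposition of $\R{p}$ are simple currents over $\R{p,0}$, as noted in Subsection \ref{sec:sc}. Hence both $\R{p}$ and $\mathcal{X}$ are simple current extensions of $\R{p,0}$ respectively $\mathcal{X}^0$, graded by the abelian group $\Z$ via $U(1)$-charge. Uniqueness of simple current extensions \cite{CKLR} then produces the desired VOA isomorphism $\mathcal{X} \cong \R{p}$. The $\B{p}$ statement follows by an identical argument: replace $\lksl$ by $L^{\mathrm{Vir}}(c_{1,p},0)$ and use the Virasoro vertex tensor category of \cite{CFJRY} in place of $\mathrm{KL}_k(\sltwo)$; the analogous $p=1$ degeneration (again recovering Heisenberg) together with the simple current lift then gives $\mathcal{Y} \cong \B{p}$.

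The main obstacle is the first step: the algebra object in question is infinite-dimensional and lives in a completion of $\mathrm{Rep}(SO(3))$, so standard finite-dimensional uniqueness arguments do not apply directly. The essential input is the free-field realization at $p=1$, which seeds the uniqueness and then propagates to all $p$ through the categorical equivalence. Once the degree-zero identification is secured, the simple current extension step is more routine, since the grading group $\Z$ is free and the relevant obstruction cohomology with $\C^\times$ coefficients vanishes.
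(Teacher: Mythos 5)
Your proposal is correct and follows essentially the same route as the paper: reduce to the charge-zero part (the paper phrases this as the Heisenberg coset $\mathrm{Com}(M_\varphi(1),\mathcal X)\cong\V{p}_0$), invoke Theorem \ref{thm:uniqueness} --- whose proof is exactly your $p=1$ seeding via $\A{1}_0\cong M(1)$ transported along $\mathrm{KL}_k^{\mathrm{even}}\cong\mathrm{Rep}(SO(3))$, resp.\ the Virasoro analogue from \cite{CFJRY} --- and then conclude by uniqueness of simple current extensions. The only cosmetic difference is the reference for the simple-current step, where the paper combines \cite[Proposition 2.15]{CGR} with \cite[Remark 3.11]{CKL} rather than \cite{CKLR}.
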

This conclusion solves the conjectures of \cite{C} concerning $\cW$-algebras at boundary admissible levels and chiral algebras for Argyres-Douglas theories.
We now explain the $\cW$-algebra connections and turn to the physics at the end of this introduction.

\subsection{$\cW$-algebras and conformal embeddings}

Let $\mathfrak g$ be a simple Lie algebra, $f$ a nilpotent element in $\mathfrak g$ and $k$ a complex number. Then to this data one associates via quantum Hamiltonian reduction from the affine \voa{} $V^k(\mathfrak g)$ the universal $\cW$-algebra of $\mathfrak g$ at level $k$ corresponding to $f$, denoted by $\cW^k(\mathfrak g, f)$ \cite{KW3}. Let $\cW_k(\mathfrak g, f)$ denote the unique simple quotient of $\cW^k(\mathfrak g, f)$. The level $k$ is admissible if $k=-h^\vee + \frac{u}{v}$ and $u, v$ positive, coprime integers with $u\geq h^\vee$  if $v$ is coprime to the lacity of $\mathfrak g$ and $u\geq h$ otherwise. Here $h^\vee$ and $h$ denote the dual Coxeter and Coxeter number of $\mathfrak g$.
An interesting question that has been studied much recently is to classify $\cW^k(\mathfrak g, f)$ that are conformal extensions of affine \voas{} and moreover to understand their decomposition in terms of modules of this affine \voa{}. This has been particularly well understood if $f$ is trivial or minimal nilpotent \cite{AKMP, AKMP2, AKMP3}. In these cases one knows the operator product algebra and also has a powerful uniqueness theorem \cite{KW3, ACKL}.

Finding examples of conformal embeddings and branching rules for $\cW$-algebras corresponding to other nilpotent elements is difficult. We successfully give explicit $\cW$-algebras realizations of $\R{p}$ and $\B{p}$-algebras through character computations \cite{C, ACKL} together with our uniqueness results. Furthermore, the $\R{p}$-case is a conformal embedding.
 \begin{theorem}\label{thm:W} \rm{(Theorems \ref{thm:RpW} and \ref{thm:BpW})} For $p\in \mathbb Z_{\geq 2}$
 \begin{enumerate}
\item  Let $\ell = -\frac{p^2-1}{p}$ and $f$ a nilpotent element in $\mathfrak{sl}_{p+1}$ corresponding to the partition $(p-1, 1, 1)$ of $p+1$, then
 $\cW_\ell(\mathfrak{sl}_{p+1}, f) \cong \R{p}$ as \voas.
 \item Let $\ell = -\frac{(p-1)^2}{p}$, then
 $\cW_\ell(\mathfrak{sl}_{p-1}, f_{\text{sub}}) \cong \B{p}$ as \voas.
 \end{enumerate}
 \end{theorem}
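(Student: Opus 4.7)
The plan is to invoke the uniqueness Corollaries \ref{cor:uniquessRp} and \ref{cor:uniquessBp}, which reduce each assertion to three verifications: (i) the $\cW$-algebra in question is simple; (ii) it contains $\lksl\otimes M(1)$ (in part~(1)) or $L^{\rm{Vir}}(c_{1,p},0)\otimes M(1)$ (in part~(2)) as a sub-VOA; and (iii) its branching as a module over this subalgebra coincides with the decomposition of $\R{p}$ (resp.\ $\B{p}$) established earlier in the paper.

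For (i), both levels $\ell=-\frac{p^2-1}{p}$ on $\mathfrak{sl}_{p+1}$ and $\ell=-\frac{(p-1)^2}{p}$ on $\mathfrak{sl}_{p-1}$ are boundary admissible. Arakawa's identification of the associated variety of the simple affine vertex algebra with the closure of a nilpotent orbit containing the chosen $f$, combined with the exactness of the quantum Hamiltonian reduction functor on the relevant category of modules, yields that $\cW_\ell(\mathfrak{g},f)$ is the non-zero simple quotient of $\cW^\ell(\mathfrak{g},f)$.

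For (ii), the reductive centralizer of an $\mathfrak{sl}_2$-triple through $f$ is $\mathfrak{gl}_2$ in case~(1) (two parts of size~$1$ in the partition $(p-1,1,1)$) and $\mathfrak{gl}_1$ in case~(2). General $\cW$-algebra theory produces an affine subalgebra of each type, whose level is determined by $\ell$ and the Dynkin indices of the embedding. A direct Kac--Wakimoto type computation gives level $k=-2+1/p$ for the $\mathfrak{sl}_2$-factor in case~(1), and central charge $c_{1,p}$ for the Virasoro obtained by subtracting the Heisenberg stress tensor from the $\cW$-algebra conformal vector in case~(2). Simplicity of $\cW_\ell(\mathfrak{g},f)$ then forces the embedded affine (resp.\ Virasoro) vertex operator algebra to be simple, so the sub-VOA in question is $\lksl\otimes M(1)$ (resp.\ $L^{\rm{Vir}}(c_{1,p},0)\otimes M(1)$); in case~(1) this is moreover a conformal embedding since the Sugawara vector of the affine subalgebra matches the $\cW$-algebra conformal vector.

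For (iii), one invokes the Kac--Roan--Wakimoto character formula for $\cW$-algebras at admissible levels, combined with the explicit character calculations of \cite{C, ACKL}, to determine the branching character of each $\cW$-algebra over its identified subalgebra. These characters match term by term with the decompositions
\[
\R{p}\cong\bigoplus_{m\in\Z}\bigoplus_{s\ge 0}\LL{|m|+2s}{p}\otimes M_\varphi(1,-m)
\]
and the analogous one for $\B{p}$. Applying Corollaries \ref{cor:uniquessRp} and \ref{cor:uniquessBp} then yields both isomorphisms. The principal obstacle is~(ii): pinning down the precise level of the affine subalgebra in a non-principal reduction and confirming that it is exactly $k=-2+1/p$ (rather than some other level compatible with the branching character), together with the simplicity statements needed to identify the sub-VOA with $\lksl\otimes M(1)$ or $L^{\rm{Vir}}(c_{1,p},0)\otimes M(1)$.
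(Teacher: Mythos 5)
Your proposal follows essentially the same route as the paper: both reduce the theorem to the uniqueness results (Corollaries \ref{cor:uniquessRp} and \ref{cor:uniquenessBp}) and then verify that the $\cW$-algebras carry the required subalgebra and branching/character data. The only difference is that the paper outsources your verifications (ii) and (iii) to \cite[Theorem 5.7]{C} and \cite[Theorem 27]{ACKR} (and your step (i) is immediate since $\cW_\ell(\mathfrak g,f)$ is by definition the simple quotient), whereas you sketch how to carry them out directly; note also that for $\B{p}$ the paper invokes the character-based Corollary \ref{cor:uniquenessBp} rather than the module-based one, precisely because the cited input is only a character identity.
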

The second statement solves the conjecture that $\B{p}$  is a simple quotient of an affine  $\cW$-algebra of type $A$ \cite{CRW}.

\subsection{Inverting Quantum Hamiltonian reduction}

The quantum Hamiltonian reduction realizes the Virasoro algebra at central charge $c_{1, p}$ as a certain cohomology of a complex associated to $\lksl$ with $k=-2+\frac{1}{p}$. The cohomology of  the $\LL{n}{p}$ are then corresponding Virasoro algebra modules and it is no problem to verify that $H_{\rm{DS}}^0(\V{p}) \cong \A{p}$ and $H_{\rm{DS}}^0(\R{p}) \cong \B{p}$ as Virasoro algebra modules. We work out the quantum Hamiltonian reduction of relaxed-highest weight modules (Proposition \ref{pom-1}) in order to prove that
\begin{theorem} \textup{(Theorem \ref{reduction})}
As \voas{}
\[
H_{\rm{DS}}^0(\R{p}) \cong \B{p},
\]
and as abelian intertwining algebras
\[
H_{\rm{DS}}^0(\V{p}) \cong \A{p}.
\]
\end{theorem}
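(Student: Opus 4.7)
The plan is to apply the BRST functor $H^0_{\rm DS}$ summand by summand to the explicit decompositions of $\V{p}$ and $\R{p}$, compute $H^0_{\rm DS}$ of each simple $\lksl$-summand using Arakawa's results at admissible levels, compare with the known decompositions of $\A{p}$ and $\B{p}$ into Virasoro (resp.\ Virasoro $\otimes$ Heisenberg) modules, and finally upgrade the resulting module isomorphism to one of (abelian intertwining) vertex algebras.

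First I would observe that the $\sltwo$-action on $\V{p}$ supplied by Theorem \ref{thm;sl2decomp}, and the Heisenberg action on $\R{p}$ visible in its decomposition, both commute with the $\lksl$-action that is being reduced, and so descend to actions on the zeroth cohomology. Since $H^0_{\rm DS}$ is exact on the relevant category of $\lksl$-modules --- which is exactly the point of Proposition \ref{pom-1} on reduction of relaxed highest-weight modules --- the decompositions give
\begin{align*}
H^0_{\rm DS}(\V{p}) &\cong \bigoplus_{n \geq 0} \rho_n \otimes H^0_{\rm DS}(\LL{n}{p}), \\
H^0_{\rm DS}(\R{p}) &\cong \bigoplus_{\ell \in \Z,\, s \geq 0} H^0_{\rm DS}(\LL{|\ell|+2s}{p}) \otimes M_\varphi(1, -\ell).
\end{align*}

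Next, at $k = -2 + 1/p$ the reduction of the simple highest-weight $\lksl$-module of weight $n\omega$ is the simple Virasoro module $L^{\rm Vir}(c_{1,p}, h_{n+1,1})$, with $h_{n+1,1}$ read off from the Kac table at $c=c_{1,p}$; this is a standard consequence of Arakawa's work on reduction of simple modules at admissible levels, and can alternatively be obtained from a BGG-style resolution of $\LL{n}{p}$. Plugging this into the formulas above and comparing with the Virasoro decomposition of $\A{p}$ from \cite{AdM-doublet} (together with the induced Virasoro $\otimes$ Heisenberg decomposition of $\B{p}$ derived from $\B{p} = (\A{p} \otimes F_{-\frac{p}{2}})^{U(1)}$) yields the claimed isomorphisms as modules for the Virasoro (respectively Virasoro $\otimes$ Heisenberg) subalgebras.

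Since $H^0_{\rm DS}$ is a functor with target vertex (super)algebras, the left-hand sides automatically carry vertex algebra structures, and the previous step identifies them with $\A{p}$ and $\B{p}$ as vertex algebras after invoking simplicity and, if needed, the uniqueness result of Corollary \ref{cor:uniquessBp}. The main obstacle is the abelian intertwining refinement in the $\V{p}\to\A{p}$ case: one must additionally track the $\frac12\Z$-grading induced from the rescaled lattice $F_{\frac{p}{2}}$ and the abelian $3$-cocycle it determines. This should follow from $SL(2,\C)$-equivariance of $H^0_{\rm DS}$ together with the observation that the corresponding grading and cocycle on $\A{p}$ arise from the analogous lattice at the doublet level, so that the two structures are automatically compatible.
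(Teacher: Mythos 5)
Your first two steps (termwise reduction of the decompositions via Arakawa's vanishing theorem and matching against the known Virasoro, resp.\ Virasoro $\otimes$ Heisenberg, decompositions of $\A{p}$ and $\B{p}$) agree with the paper's Lemma \ref{reduction1}, which establishes exactly these module-level isomorphisms. The divergence, and the gap, lies in how you upgrade them to isomorphisms of (abelian intertwining) vertex algebras. You propose to invoke simplicity plus the uniqueness results, but this fails at two points. First, Corollary \ref{cor:uniquessBp} requires the candidate algebra to be a \emph{simple} \voa, and simplicity of $H^0_{\rm{DS}}(\R{p})$ is not automatic: quantum Hamiltonian reduction does not in general send simple vertex algebras to simple ones, and nothing in your argument rules out the cohomology carrying a non-simple vertex algebra structure on the same underlying $L^{\rm{Vir}}(c_{1,p},0)\otimes M(1)$-module. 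Second, for the $\V{p}\to\A{p}$ case there is no uniqueness theorem available to invoke at all: the paper's uniqueness results cover the singlet, $\B{p}$, $\R{p}$ and the triplet, not the doublet together with its abelian $3$-cocycle data, so the appeal to ``$SL(2,\C)$-equivariance of $H^0_{\rm{DS}}$'' is not a substitute for an argument. (A minor mis-attribution: exactness and vanishing for the modules $\LL{n}{p}$ come from Arakawa's theorem cited in Lemma \ref{reduction1}, not from Proposition \ref{pom-1}, which concerns reduction of $\Pi(0)$-modules.)

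The paper closes this gap by a different mechanism, and you would need some version of it. It starts from the short exact sequence $0\to\V{p}\xrightarrow{\widetilde\phi}(\A{p}\otimes\Pi(0)^{\frac{1}{2}})^{\Z_2}\to\mathrm{Im}(S)\to 0$ of Proposition \ref{prop:VpviaAp}; the reduction of the middle term is computed to be exactly $\A{p}$ using the explicit cohomology of $\Pi(0)$-modules (Proposition \ref{pom-1} and Corollary \ref{notes}). The induced map $\phi\colon H^0_{\rm{DS}}(\V{p})\to\A{p}$ is then automatically a homomorphism of abelian intertwining algebras because it is induced by the embedding $\widetilde\phi$, and Lemma \ref{lemma1} shows it is an isomorphism by verifying that each singular vector $v_{1,s+1,j}\otimes e^{\frac{s}{2}c}$ survives to a nonzero class in cohomology, so that $\phi$ restricts to a nonzero, hence bijective, map on every simple Virasoro summand. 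This yields the vertex-algebra and cocycle compatibility for free, which is precisely what your route leaves open. If you wish to retain your strategy for the $\R{p}$ statement, you must at minimum prove that $H^0_{\rm{DS}}(\R{p})$ is simple (say by an argument analogous to Proposition \ref{simple}), and you would still need a genuinely different argument for the doublet case.
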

The above Theorem resolves Conjecture 5.11 of \cite{C} and we will return to it when discussing the physics applications.
Having this Theorem in mind one sees that the statements
\[
{\R{p}} =  \left(  \B{p} \otimes \Pi(0)^{\frac{1}{2}}\right)^{\rm{int}}, \qquad
 \V{p}  =  \left(  \A{p}   \otimes \Pi(0)^{\frac{1}{2}} \right)^{\rm{int}}
\]
invert the quantum Hamiltonian reduction. This and also our other findings are interesting in the context of vertex algebras for $S$-duality and Argyres-Douglas theories as we will finally explain now.

\subsection{Vertex Algebras for $S$-duality}

Let $G$ be a compact Lie group with Lie algebra $\mathfrak g$ and let $\Psi$ be a complex number. One associates to this data so-called GL-twisted $\mathcal N=4$ superconformal four-dimensional gauge theories. $G$ is the gauge group and $\Psi$ the coupling of the theory and GL indicates the connection to the geometric Langlands program \cite{KaWi} (see also \cite{AFO, FG} for recent work on the connection between quantum geometric Langlands and $S$-duality). The physics motivation is to generalize Montonen-Olive electro-magnetic duality and the overall picture is that there are dualities between gauge theories associated to $G$ and either $G$ itself or its Langlands dual ${}^LG$ with coupling constants related by M\"obius transformation $\Psi \mapsto \frac{a\Psi +b}{c\Psi+d}$ with $\left( \begin{smallmatrix} a& b \\ c & d \end{smallmatrix}\right)$ in $\text{GL}(2, \mathbb Z)$. The inversion of $\Psi$ is referred to as $S$-duality. Vertex algebras appear at the intersections of three-dimensional topological boundary conditions, while categories of modules are attached to the various boundary conditions. The type of \voa{} depends on the type of boundary conditions, see \cite{CG, GR}. Most importantly, the conjecture is that the following object has the structure of a simple vertex operator superalgebra,
\[
A^n[\mathfrak g, \Psi] = \bigoplus_{\lambda \in P^+_n} V^k(\lambda) \otimes V^\ell(\lambda)
\]
 where $k, \ell$ are related to the coupling $\Psi$ via $\Psi=k+h^\vee$ and
 \[
 \frac{1}{k+h^\vee} + \frac{1}{\ell+h^\vee} = n \ \in  \ \mathbb Z_{\geq 1}.
 \]
Here $P^+_n$ is a subset of dominant integral weights depending on $n$ such that it includes all dominant integral weights that lie in the root lattice $Q$, i.e. $Q\cap P^+ \subset P^+_n$ and the $V^k(\lambda)$ are Weyl modules, namely the $V^k(\mathfrak g)$-modules induced from the irreducible highest-weight representation $\rho_\lambda$ of $\mathfrak g$ of highest-weight $\lambda$. Other interesting \voas{} appear by applying the quantum Hamiltonian reduction functor of either level $\ell$ or $k$ for some nilpotent element $f$ of $\mathfrak g$. The existence of these \voas{} is mostly open, except for $\mathfrak g$ simply-laced and $f$ principal nilpotent \cite{ACL} and $\mathfrak g =\mathfrak{sl}_2$ and $n=1, 2$ \cite{CG, CGL}. These latter cases are related to the exceptional Lie superalgebra $\mathfrak d(2, 1, \alpha)$ and its minimal $\cW$-superalgebra, the large $N=4$ superconformal algebra.
The algebras $A^n[\mathfrak g, \Psi]$ are expected to play an important role in quantum geometric Langlands, while its large $\Psi$-limit should relate to the classical geometric Langlands program. The expectation is that $A^n[\mathfrak g]$ is a deformable family of \voas{} in the sense of \cite{CL} so that the limit $\Psi\rightarrow \infty$ exists. Moreover, the simple quotient of this limit should be a \voa{} with $G$ as a subgroup of automorphisms and it should be of the form
\[
A^n[\mathfrak g, \infty] = \bigoplus_{\lambda \in P^+_n} \rho_\lambda \otimes V^\ell(\lambda)
\]
as $G\otimes V^\ell(\mathfrak g)$-module and $\ell= -h^\vee +\frac{1}{n}$. All these are conjectural, see Section 1.3.1 of \cite{CG} and our uniqueness result proves these large $\psi$-limit conjectures for $G=SU(2)$, i.e we have that \cite[Conjecture 1.2]{CG} is true for $G=SU(2)$,
\[
A^n[\mathfrak g, \infty] \cong \begin{cases}  \V{n} & \qquad n \ \text{even} \\ (\V{n})^{\mathbb Z_2} & \qquad n \ \text{odd}. \end{cases}
\]
Quantum Hamiltonian reduction of $\V{n}$ gives $\A{n}$, which is the large $\Psi$-limit of another such corner \voa.

\subsection{Chiral Algebras for Argyres-Douglas theories}

The second physics instance relevant to our work are chiral algebras of Argyres-Douglas theories. These are also four dimensional but $\mathcal N=2$ supersymmetric gauge theories \cite{AD} associated to pairs of Dynkin diagrams $(X, Y)$ of simple Lie algebras. Vertex algebras appear as chiral algebras of protected sectors of these gauge theories \cite{Beemetal} and in this instance the central charge, the rank of the Heisenberg subalgebra, affine subalgebras and their levels and the graded character of the chiral algebra can be determined from physics considerations, see e.g. \cite{BN1, CS}. The clear question is then if indeed a \voa{} with the desired properties exists and if it is uniquely determined by them. We also require that the chiral algebra is a simple \voa.

Set $X=A_1$ and $Y$ either of type $A$ or $D$. The Schur index and central charge of $(A_1,A_{2n})$ Argyres-Douglas theories coincide with the character
and the central charge of $L^{\text{Vir}}(c_{2, 2n+3},0)$ with $c_{2, n} = 1 - 6(2n + 1)^2/(4n + 6)$ \cite{Ras} and there is no flavor symmetry meaning that there is no Heisenberg or affine subalgebra.
In the case of $(A_1 , D_{2n+1})$, the physics data determine the chiral algebra as the simple affine vertex operator algebra of $\mathfrak{sl}_2$ at level $k = -4n/(2n + 1)$. The uniqueness of these \voas{} is obvious, i.e. any simple \voa{} whose character and central charge coincides with the simple Virasoro vertex operator algebra or simple affine vertex operator algebra $\lksl$ must be isomorphic to this \voa. The cases of the chiral algebras of Argyres-Douglas theories of types $(A_1, D_{2p})$ and $(A_1, A_{2p-3})$ are much more complicated. In \cite{C}, the Schur-index was identified with the one of the $\cW$-algbera of our Theorem \ref{thm:W} in the $(A_1, D_{2p})$-case and with the character of the $\B{p}$-algebra in the type $(A_1, A_{2p-3})$-case. Our uniqueness Theorems identify the chiral algebras of these Argyres-Douglas theories, see Section \ref{sec:AD}.
That is, we have for $p \geq 2$:
\begin{enumerate}
\item The chiral algebra of type $(A_1, D_{2p})$ is $\R{p}$.
\item  The chiral algebra of type $(A_1, A_{2p-3})$ is $\B{p}$.
\end{enumerate}

\subsection{Outlook}

Higher rank analogues of the triplet algebras are introduced by Feigin and Tipunin \cite{FT}. Not much is known about these algebras \cite{AdM-LCFT2, CM2} and they deserve further study. For example there are higher rank analogues of $\B{p}$ whose character coincides with a Schur index of a higher rank Argyres-Douglas theory \cite{C3,BN2}. Our current aim building on this work is to solve more decomposition problems of conformal embeddings of $\cW$-algebras and to understand quantum Hamiltonian reduction on the category of relaxed-highest weight modules and their spectrally flown images better.

\subsection{Organization of this work}

We start in section \ref{sec:2} by defining the $\V{p}$ and $\R{p}$-algebras along with stating a few basic properties.
The next two sections are then devoted to establish most of the structural results about $\V{p}$ and $\R{p}$ that we mentioned in the introduction.
The case $p=1$ is different (and much simpler) than the general case and is discussed in section \ref{sec:5}.
Our structural results are then used in section \ref{sec:6} to determine tensor category structure and to use this to prove the uniqueness of vertex operator algebra structure on our algebras. As a consequence we identify $\R{p}$ and $\B{p}$ with $\mathcal W$-algebras. The next section then uses these uniqueness results to identify $\R{p}$ and $\B{p}$ with chiral algebras of Argyres-Douglas theories.
In section \ref{sec:8} we study properties of the quantum Hamiltonian reduction functor from $\lksl$ to the Virasoro algebra. Especially we give a procedure that goes back from Virasoro modules to $\lksl$-modules. This is used to show that $\V{p}$ and $\R{p}$ are related to $\A{p}$ and $\B{p}$ via quantum Hamiltonian reduction.

\section{Introduction of the relevant algebras }\label{sec:2}

We recall \cite{A} and  \cite{A-2019} and realize the algebras that we are interested inside larger free field times lattice vertex algebras.
For this let $ p\in {\Z}_{\ge 2}$ and let $N^{(p)} $ be the following lattice
\begin{equation}
N^{(p)}= {\Z} \a + {\Z}\b + {\Z} \delta
\end{equation}
with the ${\Q}$--valued bilinear form $\la \cdot, \cdot \ra$ such
that
\begin{equation}
 \la \a , \a \ra = 1,  \quad \la \b , \b
\ra =-1 \quad \text{and} \quad \la \delta, \delta \ra = \frac{2}{p}
\end{equation} and all other products of basis vectors are zero.
Let $V_{ N^{(p)} } $ be the associated  abelian intertwining algebra
and set
$k = -2+ \frac{1}{p}$.
One defines the
 three elements
\begin{equation} \label{def-efh}
\begin{split}
e &= e^{\a + \b}, \\
 h &= -2 \b (-1) + \delta(-1),  \\
 f &= ( (k+1)  ( \a (-1) ^{ 2} - \a(-2) ) - \a(-1)
\delta(-1) +     (k+2)  \a(-1) \b (-1)  )   e^{-\a - \b}.
\end{split}
\end{equation}
Then the components of the fields $$Y(x,z) = \sum\limits_{n \in {\Z}} x(n)
z ^{-n-1}, \ x \in \{e,f,h\}$$ satisfy the commutations relations
for the affine Lie algebra $\asltwo$ of level $k$. Moreover, the subalgebra of $V_{ L ^{(p)} }$ generated by the
set $\{e,f,h\}$ is isomorphic to the simple vertex operator
algebra $\lksl$.

The screening operators that we need are
\bea
\label{scr-gen}   Q = \mbox{Res}_z Y( e^{\alpha + \beta - p  \delta}, z) , \quad  \widetilde{Q} = \mbox{Res}_z Y( e^{ - \frac{1}{p} (\alpha + \beta)  +\delta}, z).
\eea
They commute with the $\asltwo$--action. It is now useful to introduce some additional elements
following \cite{A-2019}.    Let
\begin{equation}\label{eq:gammamunu}
\begin{split}
 \gamma &:= \alpha + \beta - \tfrac{1}{k+2}\delta =  \alpha + \beta - p \delta,\\
  \mu &:= - \beta + \tfrac{1}{2} \delta, \\
  \nu &:= -\tfrac{k}{2} \alpha - \tfrac{k+2}{2}\beta  + \tfrac{1}{2}\delta =  \alpha - \tfrac{1}{2p}(\alpha+\beta)  + \tfrac{1}{2}\delta.
  \end{split}
\end{equation}
Then
\begin{equation}
 \langle \gamma, \gamma \rangle = \frac{2}{k+2} = 2p , \quad \langle \mu, \mu \rangle = - \la \nu, \nu \ra = \frac{k}{2},
 \end{equation}
and all other products are zero.  The screening charges then take the form
\bea
\label{scr-gen2}   Q = \mbox{Res}_z Y( e^{\gamma}, z) , \quad  \widetilde{Q} = \mbox{Res}_z Y( e^{ - \frac{\gamma}{p}}, z).
\eea
For our calculation, it is useful to notice that
\begin{equation}
\begin{split}
 \alpha &= \nu + \tfrac{k+2}{2}\gamma, \qquad
  \beta = -\tfrac{k+2}{2} \gamma + \tfrac{2}{k} \mu - \tfrac{k+2}{k} \nu,\\
 \delta &= - (k+2) \gamma + \tfrac{2 (k+2)}{k} \mu - \tfrac{2 (k+2)}{k} \nu.
 \end{split}
 \end{equation}
Let
\begin{equation}\label{eq:cd}
 c= \frac{2}{k} (\mu - \nu), \ d = \mu + \nu.
 \end{equation}
Then

 $$ \frac{p}{2} \delta =  -\frac{\gamma}{2} + \frac{c}{2} $$

Let $M$ be the subalgebra of $V_{N^{(p)}}$ generated by
$$ a  = e ^{\alpha + \beta}, \ a ^{*} =-\alpha(-1) e^{-\alpha -
\beta}. $$ Then $M$ is isomorphic to the Weyl vertex algebra. The Weyl vertex algebra is often also called the $\beta\gamma$-\voa{} and in physics the symplectic boson algebra.

Let $M_{\delta}(1)$ be the Heisenberg vertex algebra generated by the field $\delta (z) = \sum\limits_{n \in {\Z}} \delta(n) z ^{-n-1} $
and let $\Lambda_{\delta}^{(p)} := \frac{p\delta}{2} \Z$. Then
\begin{equation}
F_{\frac{p}{2}} := V_{\Lambda_\delta^{(p)}} = M_{\delta} (1) \otimes {\C}[ {\Z} \frac{p}{2} \delta]
\end{equation}
and
\begin{equation}
M \otimes F_{\frac{p}{2}}
\end{equation}
are subalgebras of $V_{N^{(p)}}$.

We have the following abelian intertwining algebra
\begin{equation}\label{defVp}
 \V{p} = \mbox{Ker}  _{ M \otimes F_{\frac{p}{2}} } \widetilde{Q}.
 \end{equation}
Moreover, $\lksl$ can be realized as a subalgebra of
 $M \otimes M_{\delta}(1) \subset  M \otimes F_{\frac{p}{2}}$, namely
\begin{equation}\label{def-ehf-cw}
\begin{split}
 e(z)  &= a (z), \\
 h(z) &= -2 : a ^{*} (z) a (z) : +  \delta(z) ,\\
 f(z)  &= - : a^{*} (z) ^{2} a (z) : +  k \partial_{z}
a^{*} (z) +    a^{*} (z) \delta(z).
\end{split}
\end{equation}
Since $ \widetilde{Q}$ commutes with the action of  $\asltwo$ we have that  $$\lksl \subset \V{p}. $$ Moreover, one can show that $Q$ acts as a derivation on ${\mathcal V} ^{(p)}$.
Note that if $p$ is even, then $\V{p}$ is a vertex superalgebra, while for odd $p$ it is not a vertex (super)algebra but only an abelian intertwining algebra.

Recall that the  screening operators are the zero-modes
$Q = e^{\gamma}_0$ and $\widetilde Q = e^{-\frac{\gamma}{p} } _0$.
We remark that
\begin{align*}
[Q, \widetilde{Q}] = (e^{\gamma}_0 e^{-\frac{\gamma}{p} })_0 = (\gamma(-1)e^{\frac{(p-1)\gamma}{p} })_0 = \frac{p}{p-1}(\partial e^{\frac{(p-1)\gamma}{p} })_0 = 0.
\end{align*}
There are thus four important vectors that are obviously in $\V{p}$, namely
\begin{equation}\label{eq:genVp}
\begin{split}
\tau_{(p)} ^+  &= e^{ \frac{p}{2} \delta},  \\
\overline{\tau}_{(p)} ^+  &= Q  e^{ \frac{p}{2} \delta},  \\
{\tau}_{(p)} ^- &= f(0) e^{ \frac{p}{2} \delta},   \\
 \overline{\tau}_{(p)} ^- &= -f(0)  Q e^{ \frac{p}{2} \delta}.
 \end{split}
 \end{equation}

Now we introduce the second algebra we are interested in, which we call the $\R{p}$-algebra.
Let $\varphi$ satisfy $\langle \varphi, \varphi\rangle = -\frac{2}{p}$ and let $\Lambda^{(p)}_\varphi = \frac{p\varphi}{2}\Z$ and $F_{-\frac{p}{2}}:= V_{\Lambda^{(p)}_\varphi}$.
The vertex algebra $\R{p}$ is defined to be
the subalgebra of $\V{p} \otimes F_{-\frac{p}{2}}$ generated by
$ x= x(-1) {\bf 1} \otimes 1$, $x \in \{ e, f, h \} $, $1 \otimes \varphi(-1) {\bf 1}$  and
\begin{equation}\label{for-e12-p}
\begin{split}
e_{\alpha_1,p} &:=  \frac{1}{\sqrt{2}}\  \tau^{+}_{(p)}  \otimes e^{ \frac{p}{2}\varphi} =   \frac{1}{\sqrt{2}}  e^{ \frac{p}{2}( \delta +  \varphi) },   \\
f_{\alpha_1,p }  &:= \frac{1}{\sqrt{2}} \  \overline{\tau} ^{-}_{(p)}  \otimes e^{- \frac{p}{2} \varphi} =   - \frac{1}{\sqrt{2}} f(0)   Q  e^{ \frac{p}{2}( \delta +  \varphi) },    \\
e_{\alpha_2,p }  &:= \frac{1}{\sqrt{2}} \ \overline{\tau} ^{+} _{(p)} \otimes e^{- \frac{p}{2} \varphi}  =  \frac{1}{\sqrt{2}} Q  e^{ \frac{p}{2}( \delta +  \varphi) },    \\
f_{\alpha_2,p }&:= \frac{1}{\sqrt{2}}\ {\tau} ^{-} _{(p) } \otimes e^{\frac{p}{2} \varphi}  =   \frac{1}{\sqrt{2}}  f(0)  e^{ \frac{p}{2}( \delta +  \varphi) }.
\end{split}
 \end{equation}
 The Heisenberg vertex algebra generated by $\varphi$ is denoted by $M_{\varphi} (1) $ and
  in general, $\R{p}$ is an extension of
$$ \lksl \otimes M_{\varphi} (1) $$ by the four fields of conformal weight $p/2$ in (\ref{for-e12-p}). Set $M_{\delta, \varphi}(1)=M_\delta(1)\otimes M_{\varphi}(1)$
and let
\begin{equation}
\PiO = M_{\varphi, \delta} (1) \otimes {\C}[ \Z \frac{p}{2} ( \delta + \varphi) ] \subset F_{\frac{p}{2}} \otimes F_{-\frac{p}{2}}.
\end{equation}
Then $\PiO$ contains a rank one isotropic lattice \voa.
In general we have that
 $$ \mathcal R^{(p)} \subset  (M \otimes \PiO)^{\rm{int}}, $$
where $(M \otimes \PiO)^{\rm{int}}$ is the maximal $\sltwo$--integrable submodule of $M \otimes \PiO$.
The cases $p=2, 3$ have been studied:
\begin{theorem}   \cite{A, AKMP}
\begin{enumerate}
\item   $ \R{2}  \cong   L_{-3/2} (\mathfrak{sl}_3)$.
\item  $\R{3}  \cong  W_{-8/3} (\mathfrak{sl}_4, f_{\theta})$ with $f_\theta$ minimal nilpotent.
\item For $p=2,3$ we have  $\R{p} = \mbox{Ker} _{M \otimes \PiO } \widetilde Q = (M \otimes \PiO)^{\rm{int}}.$
\end{enumerate}
\end{theorem}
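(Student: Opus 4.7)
My plan is to prove each part via an explicit identification at the level of strong generators, then upgrade to an isomorphism of vertex algebras using simplicity.

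For part (1), set $p=2$. The four generators in (\ref{for-e12-p}) all have conformal weight $p/2=1$, so together with $e$, $f$, $h$ and $\varphi(-1)\mathbf{1}$ we obtain eight weight-one primary fields inside $M\otimes \PiO$. The first step is to compute the OPEs among these eight fields. The Cartan part spanned by $h(-1)\mathbf{1}$ and $\varphi(-1)\mathbf{1}$ is a rank-two Heisenberg; after an appropriate change of basis its Gram matrix should match the Cartan bilinear form of $\mathfrak{sl}_3$ at level $-3/2$. The fields $e_{\alpha_i,2}, f_{\alpha_i,2}$ together with $e, f$ should produce only simple-pole singularities whose residues reproduce the $\mathfrak{sl}_3$ Chevalley relations with the correct structure constants. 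Since $\R{2}$ is simple (it will be shown so in Proposition~\ref{simple}, but the $p=2$ case is already in \cite{A}), the induced surjection from the universal affine vertex algebra $V^{-3/2}(\mathfrak{sl}_3)$ onto $\R{2}$ must factor through its simple quotient $L_{-3/2}(\mathfrak{sl}_3)$; a character comparison, using the $\sltwo\otimes M_\varphi(1)$-decomposition of $\R{2}$ and the known decomposition of $L_{-3/2}(\mathfrak{sl}_3)$ as an $\widehat{\mathfrak{sl}_2}\oplus\widehat{\mathfrak{gl}_1}$-module, yields the isomorphism.

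For part (2), with $p=3$ the four new generators are primaries of conformal weight $3/2$, and together with $\lksl\otimes M_\varphi(1)$ they give precisely the strong generating type of the universal minimal $\cW$-algebra $\cW^{\ell}(\mathfrak{sl}_4, f_\theta)$: an affine $\mathfrak{sl}_2\oplus \mathfrak{gl}_1$ subalgebra plus four fields of conformal weight $3/2$ transforming in the defining representation of $\mathfrak{sl}_2$. I would verify that the levels of the affine and Heisenberg subalgebras, together with the central charge, pin down $\ell=-8/3$. Then the uniqueness theorem for vertex algebras strongly generated in weight one and $3/2$ of \cite{ACKL} (or a direct OPE check) forces a surjection $\cW^{-8/3}(\mathfrak{sl}_4, f_\theta) \twoheadrightarrow \R{3}$, which by simplicity of $\R{3}$ descends to the desired isomorphism with $\cW_{-8/3}(\mathfrak{sl}_4, f_\theta)$.

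For part (3), the inclusions $\R{p}\subseteq \ker_{M\otimes \PiO}\widetilde Q$ and $\R{p}\subseteq (M\otimes \PiO)^{\rm{int}}$ are immediate: $\widetilde Q$ annihilates each generator in (\ref{for-e12-p}) by a direct residue computation, and $\R{p}$ is integrable with respect to the horizontal $\sltwo$ because by (1)--(2) it is an affine or minimal $\cW$-algebra containing $\lksl$ integrably. For the reverse inclusions I would expand $M\otimes \PiO$ as a module over $\lksl\otimes M_\varphi(1)$ via the free-field realization (\ref{def-ehf-cw}), identify the Fock multiplicity spaces, and then isolate those summands in the kernel of $\widetilde Q$ and those that are $\sltwo$-integrable. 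The resulting three character series must coincide, matching the $\lksl\otimes M_\varphi(1)$-decomposition of $\R{p}$ established (or recalled) earlier.

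The main obstacle is the character/decomposition matching in (3), in particular showing for $p=3$ that no extra integrable components arise in $M\otimes \PiO$ beyond those of the minimal $\cW$-algebra; this requires a careful multiplicity analysis on Fock modules. This matching fails for $p\geq 4$, which is precisely why the theorem is stated only for $p=2,3$, and in the general case the strict inclusion $\R{p}\subsetneq (M\otimes \PiO)^{\rm{int}}$ can in principle be bridged only after the full structural results of the later sections.
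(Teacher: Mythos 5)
The first thing to note is that the paper does not prove this theorem at all: it is recalled with citations to \cite{A} (for $p=2$) and \cite{AKMP} (for $p=3$), so there is no internal proof to compare against. Your outline for parts (1) and (2) follows essentially the route of those references: identify the weight-one (resp.\ weight-$3/2$) generators, match levels and central charges, obtain a surjection from the universal affine (resp.\ minimal $\cW$-) algebra, and conclude by simplicity. Two small remarks there: in (1) the closing character comparison is redundant, since once $\R{2}$ is exhibited as a simple quotient of $V^{-3/2}(\mathfrak{sl}_3)$ it is automatically the unique simple quotient $L_{-3/2}(\mathfrak{sl}_3)$; and in (2) the uniqueness theorem of \cite{ACKL} requires \emph{strong} generation in weights $1$ and $3/2$ together with control of the weight-two terms in the OPEs of the weight-$3/2$ fields, a nontrivial verification your sketch passes over.

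The genuine gap is in part (3). The forward inclusions are indeed easy, but the reverse inclusions $\ker_{M\otimes\PiO}\widetilde Q \subseteq \R{p}$ and $(M\otimes\PiO)^{\rm int} \subseteq \R{p}$ are the entire content of the statement, and ``expand $M\otimes\PiO$ as an $\lksl\otimes M_\varphi(1)$-module and match characters'' is not yet a proof: $M\otimes\PiO$ is not semisimple over $\lksl$ (it contains relaxed highest-weight and spectrally flowed modules), so isolating the integrable part and the kernel of $\widetilde Q$ requires exactly the structural input that \cite{A, AKMP} supply, namely the independently known decompositions of $L_{-3/2}(\mathfrak{sl}_3)$ and $\cW_{-8/3}(\mathfrak{sl}_4, f_{\theta})$ over their affine-times-Heisenberg subalgebras, which are then matched against the ambient space. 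Deferring this to ``a careful multiplicity analysis'' leaves the crux unproven. Finally, your closing assertion that the equality fails for $p\geq 4$ is unsupported and should be removed: the paper only records the inclusion $\R{p}\subset (M\otimes\PiO)^{\rm int}$ in general and restricts the equality to $p=2,3$ because that is what the cited references establish, not because strictness for larger $p$ is known.
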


\section{The $\V{p}$-algebra}

We first study $\V{p}$ and $\R{p}$ will inherit many properties from $\V{p}$.

 \subsection{ From the doublet $\A{p}$ to $\V{p}$ }

We will now realize $\R{p}$ and $\V{p}$ by lifting certain well-known extensions called doublet algebras $\A{p}$ of the Virasoro \voa, following  \cite{A-2019}.
The various useful lattice vectors have been recorded in \eqref{eq:gammamunu}--\eqref{eq:cd}.
For $\underline{c} \in \mathbb{C}$, we denote the universal Virasoro \voa{} of central charge $\underline{c}$ by $V^{\rm{Vir}} (\underline{c}, 0)$.
For any two co-prime positive integers $p, q$ we set
\[
c_{q, p} := 1 - 6\frac{(p-q)^2}{pq}.
\]
Let $\omega$ be  the conformal vector in $V^{Vir} (c_{1,p}, 0)$.
Define
\begin{equation}
 \Pi(0) := M_{c,d} (1) \otimes {\C}[{\Z} c]\qquad \text{and} \qquad  \Pi(0)^{\frac{1}{2}} = M_{c,d} (1) \otimes {\C}[{\Z} \frac{c}{2}].
\end{equation}
Since $\langle c, c\rangle =0$ these are \voas{} that contain a rank one isotropic lattice \voa{} as their subalgebra.
There is an injective homomorphism of vertex algebras
$$\Phi : V^{k} (\sltwo) \rightarrow V^{\rm{Vir}} (c_{1,p}, 0) \otimes \Pi(0)  $$
such that
 \begin{equation}
 \begin{split}
e & \mapsto  e^{c  }, \label{def-e-3} \\
h & \mapsto  2 \mu(-1), \\
f & \mapsto  \left[     (k+2) \omega   -\nu(-1)^{2} -  (k+1) \nu(-2) \right] e^{-c }.
\end{split}
 \end{equation}
The famous doublet $\A{p}$ and triplet $\W{p}$ algebras are realized as \cite{AdM-doublet}:
\begin{equation}
\begin{split}
\A{p} &=   \mbox{Ker}_{ V_{{\Z} \frac{\gamma}{2}}    }\widetilde Q, \\
\W{p} &=  \mbox{Ker}_{ V_{{\Z} \gamma}    }\widetilde Q.
\end{split}
\end{equation}
The triplet $\W{p}$ are \voas, while the doublets are vertex superalgebras for even $p$ and otherwise only abelian intertwining algebras.
 Recall that
$\A{p}$ is generated by the doublet $a^\pm$ together with the Virasoro element $\omega$. These are explicitely
$$ a^+ = Q a^-, \qquad a^- =e^{-\frac{\gamma}{2}}\quad \text{and} \quad \omega = \frac{1}{4 p} \gamma(-1) ^2 + \frac{p-1} {2p} \gamma (-2). $$

The abelian intertwining algebras $\A{p}$ and $\Pi(0)^{\frac{1}{2}}$ both have a $\Z_2$-action with invariant subalgebras $\W{p}$ and $\Pi(0)$. So that they decompose as $\W{p}$ and $\Pi(0)$-modules
\[
\A{p} = \A{p}_0 \oplus \A{p}_1 \qquad  \text{and} \qquad
\Pi(0)^{\frac{1}{2}} = \Pi(0)^{\frac{1}{2}}_0 \oplus \Pi(0)^{\frac{1}{2}}_1,
\]
with $\A{p}_0=\W{p}$ and $\Pi(0)^{\frac{1}{2}}_0=\Pi(0)$. The diagonal $\Z_2$-orbifold is thus
\begin{equation}\label{eq:ApPiZ2-dec}
\left(\A{p} \otimes \Pi(0)^{\frac{1}{2}}\right)^{\Z_2} \cong \W{p} \otimes \Pi(0) \oplus  \A{p}_1 \otimes \Pi(0)^{\frac{1}{2}}_1.
\end{equation}
We have that $\V{p} \subset  \left(\A{p} \otimes \Pi(0)^{\frac{1}{2}}\right)^{\Z_2} $ and the expressions for the generators are
\begin{equation}
\begin{split}
 \tau_{(p)} ^+  &= a^{-} e^{\frac{c}{2}  },  \\
  \overline{\tau}_{(p)} ^+  &=a^{+} e^{\frac{c}{2}  },   \\
     {\tau}_{(p)} ^- &= f(0) a^{-} e^{\frac{c}{2} }    \\
     \overline{\tau}_{(p)} ^- &= -f(0)  a^{+} e^{\frac{c}{2} }.
 \end{split}
\end{equation}
Note that $\mathcal V^{(p)}$ can also be realized as a subalgebra of $M \otimes F_{\frac{p}{2}}$, we need to consider
screening operator
\begin{equation}\label{operatorS}
S =  e^{\alpha}_0 = \mbox{Res}_z Y(e ^{\alpha}, z),
\end{equation}
since $M= \mbox{Ker}_{\Pi(0)}S$ (\cite{FMS}).

\begin{remark}
Note that the screening operator $S$ can be obtained as
$$ S= \mbox{Res}_ z Y ( e^{\gamma / 2p + \nu}, z) =   \mbox{Res}_ z Y ( v_{2,1} \otimes e^{\nu} , z), $$
where $v_{2,1} = e^{\gamma / 2p}$ is a singular vector for the Virasoro algebra with conformal weight $\frac{3}{4} k + 1$.
This screening operator has also appeared in \cite{A-2019}.
\end{remark}

\begin{proposition}\label{prop:VpviaAp}
$ \V{p} = \mbox{\rm{Ker}}_{\left(\A{p} \otimes \Pi(0)^{\frac{1}{2}}\right)^{\Z_2} } S$.
\end{proposition}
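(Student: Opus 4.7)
The plan is to work inside the common ambient abelian intertwining algebra
\[
A := \Pi(0) \otimes F_{\tfrac{p}{2}}
\]
and exhibit both sides of the proposition as $\ker_A S \cap \ker_A \widetilde Q$; the asymmetry in the statement then merely reflects the choice of which screening one imposes first.

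The essential geometric input is the identification
\[
A \;\cong\; \bigl(V_{\Z\gamma/2} \otimes \Pi(0)^{\frac{1}{2}}\bigr)^{\Z_2}
\]
as abelian intertwining algebras. From \eqref{eq:gammamunu} and \eqref{eq:cd} one checks directly that $\mu - \nu = \tfrac{k}{2}(\alpha+\beta)$, so that $c = \alpha+\beta$; combined with $\gamma = \alpha+\beta - p\delta$ this yields $\tfrac{p}{2}\delta = \tfrac{1}{2}(c-\gamma)$. Expanding a general element $nc + m\tfrac{p}{2}\delta = -\tfrac{m}{2}\gamma + \tfrac{2n+m}{2}c$ of the lattice underlying $A$ shows it is exactly $\{a\tfrac{\gamma}{2} + b\tfrac{c}{2}: a,b\in\Z,\ a+b\in 2\Z\}$, which is the diagonal $\Z_2$-invariant sublattice of $\Z\tfrac{\gamma}{2}\oplus\Z\tfrac{c}{2}$ underlying the right-hand side. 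The Heisenberg parts agree trivially, since $\gamma$, $c$, $d$ generate the same rank-three Heisenberg in both presentations and $\gamma$ is orthogonal to both $c$ and $d$.

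With this identification in hand, the rest is formal. Since $\la\alpha,\delta\ra = 0$, the screening $S = e^\alpha_0$ acts trivially on the $F_{\frac{p}{2}}$-factor, and combined with the FMS formula $M = \ker_{\Pi(0)} S$ this gives $M\otimes F_{\frac{p}{2}} = \ker_A S$, whence
\[
\V{p} \;=\; \ker_{M\otimes F_{\frac{p}{2}}} \widetilde Q \;=\; \ker_A S \,\cap\, \ker_A \widetilde Q.
\]
Dually, $\widetilde Q = e^{-\gamma/p}_0$ acts trivially on the $\Pi(0)^{\frac{1}{2}}$-factor because $\gamma\perp c,d$, so $\A{p}\otimes\Pi(0)^{\frac{1}{2}} = \ker_{V_{\Z\gamma/2}\otimes\Pi(0)^{\frac{1}{2}}} \widetilde Q$. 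Intersecting with the $\Z_2$-stable subspace $A$ yields $\bigl(\A{p}\otimes\Pi(0)^{\frac{1}{2}}\bigr)^{\Z_2} = \ker_A \widetilde Q$, and therefore
\[
\ker_{(\A{p}\otimes\Pi(0)^{\frac{1}{2}})^{\Z_2}} S \;=\; \ker_A \widetilde Q \,\cap\, \ker_A S \;=\; \V{p}.
\]

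The main obstacle is the lattice identification together with the verification that the cocycles on the two presentations of $A$ coincide as abelian intertwining structures, not merely as graded vector spaces. The orthogonalities $\la\gamma,c\ra = \la\gamma,d\ra = 0$ decouple the $\gamma$- and $c$-directions so that the cocycle inherited from $V_{N^{(p)}}$ should match the product cocycle of the orbifold side after the sublattice identification, but for odd $p$ where $A$ is only an abelian intertwining algebra this cocycle comparison deserves a careful case-by-case check.
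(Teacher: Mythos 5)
Your argument is correct and is essentially the paper's own proof: both sides are realized as $\ker S \cap \ker \widetilde{Q}$ inside the common ambient algebra $\Pi(0)\otimes F_{\frac{p}{2}}$, identified with the diagonal $\Z_2$-orbifold of $V_{\Z\gamma/2}\otimes \Pi(0)^{\frac{1}{2}}$, using $M=\mbox{Ker}_{\Pi(0)}S$ and $\A{p}=\mbox{Ker}_{V_{\Z\gamma/2}}\widetilde{Q}$. The only difference is that you spell out the lattice identification via $\tfrac{p}{2}\delta=\tfrac{1}{2}(c-\gamma)$ and flag the cocycle comparison, which the paper compresses into the phrase ``computing that $\Pi(0)\otimes F_{\frac{p}{2}}$ is the diagonal $\Z_2$-orbifold.''
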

\begin{proof}
Combining $\A{p} =   \mbox{Ker}_{ V_{{\Z} \gamma/2}}\widetilde Q$, $M= \mbox{Ker}_{\Pi(0)}S$ and $\V{p}  =  \mbox{Ker}_{M \otimes F_{\frac{p}{2}} } \widetilde Q$ together with computing that $\Pi (0)  \otimes F_{\frac{p}{2}} $ is the diagonal $\Z_2$-orbifold of $V_{{\Z} \gamma / 2} \otimes \Pi(0)^{\frac{1}{2}}$
we immediately get the claim
\bea
\V{p}  & = & \mbox{Ker}_{M \otimes F_{\frac{p}{2}} } \widetilde Q \nonumber \\
&=& \mbox{Ker}_{  \Pi (0)  \otimes F_{\frac{p}{2}}  }\widetilde Q\   \bigcap\    \mbox{Ker}_{  \Pi  (0)  \otimes F_{\frac{p}{2}}  }  S  \nonumber \\
&=&   \mbox{Ker}_{ \left(V_{{\Z} \gamma / 2} \otimes \Pi(0)^{\frac{1}{2}}\right)^{\Z_2}   }\widetilde Q  \ \bigcap  \  \mbox{Ker}_{ \left(V_{{\Z} \gamma / 2} \otimes \Pi(0)^{\frac{1}{2}}\right)^{\Z_2}     }  S  \nonumber \\
& =&\mbox{Ker}_{\left(\A{p} \otimes \Pi(0)^{\frac{1}{2}}\right)^{\Z_2} } S. \nonumber
\eea
\end{proof}

Let $L^{\rm{Vir}} (c,  h)$ denote the irreducible lowest-weight module of the Virasoro algebra at central charge $c$ of lowest-weight $h$.
Now we take the following decomposition of $\A{p}$ which follows from e.g. \cite[Thm 1.1 and 1.2]{AdM-triplet}
\begin{equation}\label{eq:decAp}
 \A{p} = \bigoplus_{n = 0} ^{\infty} \rho_n \otimes L^{\rm{Vir}} (c_{1,p},  h_{1,n+1} ),
 \end{equation}
where $\rho_n$ is $n+1$--dimensional irreducible representation of $\sltwo$ and
$$h_{1,n+1} = \frac{ ( 1 - (n+1) p) ^2 - (p-1)^2}{4 p}. $$
Note that the additional $\sltwo$-action is defined by $e = Q$, $h = \frac{\gamma(0)}{p}$ and certain action of $f$. Let
$$ v_{1, n, j} :=  Q ^ j e^{ - \frac{n-1}{2} \gamma} \in \A{p},\quad j =0, \ldots, n-1.$$
Then $v_{1, n, j}$ is the  highest weight vector in  $L^{\rm{Vir}} (c_{1,p},  h_{1,n} )$ and $\bigoplus\limits_{j=0}^{n-1} \C v_{1, n, j}$ is isomorphic to $\rho_{n-1}$ with respect to the additional $\sltwo$-action, in which $v_{1, n, j}$ is the weight vector of the weight $2j-n+1$.
We denote by $\LL{s}{p}$ the irreducible highest-weight representation of $\asltwo$ whose top level is $\rho_s$ and on which the central element acts by multiplication by the level $k=-2+\frac{1}{p}$. Common notations are
 $$\LL{s}{p} = L_{A_1} ( (k +2 -s) \Lambda_0 + s \Lambda_1) = L_k(s\omega_1) \quad (s \in {\Z}_{\ge 0}), $$
 with $\Lambda_0, \Lambda_1$ the affine fundamental weights and $\omega_1$ the fundamental weight of $\sltwo$.
\begin{proposition} \cite[Proposition 6.1]{A-2019}. \label{prop:Ls-in-LVir}
 For every $s \in {\Z}_{\ge 0}$ and $j = 0, \ldots, s$, we have
 $$ \widetilde{\phi}_{s, j} \colon \LL{s}{p} \xrightarrow{\sim} \lksl. (v_{1, s+1, j} \otimes e^{\frac{s}{2} c}) \subset  L^{\rm{Vir}} (c_{1,p},  h_{1,s+1} ) \otimes \Pi (0)^{\frac{1}{2}}. $$
\end{proposition}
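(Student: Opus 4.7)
The plan is to verify that $v_{1,s+1,j}\otimes e^{\frac{s}{2}c}$ is an affine $\asltwo$-singular vector of weight $s\omega_1$ at level $k$ for the free-field realization $\Phi$ of \eqref{def-e-3}, and then identify the generated $\lksl$-submodule with the simple quotient $\LL{s}{p}$ by matching graded characters. First I record the pairings implied by \eqref{eq:gammamunu}--\eqref{eq:cd}: one has $\langle c,c\rangle=0$ and $\langle \mu,c\rangle=\langle \nu,c\rangle=1$. Hence $\mu(n)$ and $\nu(n)$ annihilate $e^{\frac{s}{2}c}$ for $n\ge 1$, while $\mu(0)$ and $\nu(0)$ each act by $s/2$; on the Virasoro side, $v_{1,s+1,j}$ is a primary vector of conformal weight $h_{1,s+1}$.

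Next I would verify the positive-mode vanishing conditions. Since $\langle c,\tfrac{s}{2}c\rangle=0$, the product $Y(e^c,z)e^{\frac{s}{2}c}$ has no polar terms in $z$, so $e(n)=(e^c)_n$ kills the vector for every $n\ge 0$. Similarly $h(n)=2\mu(n)$ kills it for $n\ge 1$, and $h(0)$ acts by $s$. For the $f$-action, expanding the normally-ordered product that defines $\Phi(f)=[(k+2)\omega - \nu(-1)^2-(k+1)\nu(-2)]e^{-c}$ and using that $(e^{-c})_m$ annihilates $e^{\frac{s}{2}c}$ for $m\ge 0$ (again by $\langle -c,\tfrac{s}{2}c\rangle=0$), together with the Virasoro primary property of $v_{1,s+1,j}$ and the vanishing of the positive $\nu$-modes, a short calculation using the Borcherds commutator formula gives $f(n)(v_{1,s+1,j}\otimes e^{\frac{s}{2}c})=0$ for all $n\ge 1$. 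Thus $\lksl.(v_{1,s+1,j}\otimes e^{\frac{s}{2}c})$ is a highest-weight module at level $k$ of highest weight $s\omega_1$, so a quotient of the corresponding Weyl module $V^k(s\omega_1)$.

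Finally, to identify this quotient with the simple module $\LL{s}{p}$, I would compare graded characters. The total conformal grading on $L^{\rm{Vir}}(c_{1,p},h_{1,s+1})\otimes \Pi(0)^{\frac{1}{2}}$ is the sum of the Virasoro grading on the first factor and the $\Pi(0)$-grading on the Fock orbit of $e^{\frac{s}{2}c}$ under the subalgebra generated by $\mu(-n),\nu(-n)$ and $e^{\pm c}$ appearing in $\Phi$. Combining the known character of the simple Virasoro module with the standard Fock character reproduces exactly the admissible character of $\LL{s}{p}$ at level $k=-2+\frac{1}{p}$, forcing the generated submodule to be the simple quotient rather than a proper highest-weight quotient.

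The main obstacle is the $f(n)$ computation for $n\ge 1$: carefully tracking the normally-ordered product $\Phi(f)$ across the two tensor factors is the one substantive calculation, and the character-matching step requires invoking known admissible character formulas for $\LL{s}{p}$ together with irreducibility of $L^{\rm{Vir}}(c_{1,p},h_{1,s+1})$.
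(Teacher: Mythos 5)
First, a structural point: the paper does not actually prove this Proposition — it is imported verbatim from \cite[Proposition 6.1]{A-2019} — so there is no internal argument to compare against line by line. Your outline of the first half, namely checking that $v_{1,s+1,j}\otimes e^{\frac{s}{2}c}$ is an $\asltwo$-singular vector of weight $s\omega_1$ at level $k$ using $\langle c,c\rangle =0$, $\langle \mu,c\rangle=\langle\nu,c\rangle=1$ and the explicit free-field formulas \eqref{def-e-3}, is sound and is essentially the computation carried out in \cite{A-2019} (and implicitly invoked in the proof of Lemma \ref{ker} here).

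The genuine gap is in your identification step. You propose to conclude simplicity by ``combining the known character of the simple Virasoro module with the standard Fock character'' and matching against the admissible character of $\LL{s}{p}$. But the character of $L^{\rm{Vir}}(c_{1,p},h_{1,s+1})\otimes \Pi(0)^{\frac{1}{2}}$ — or even of the single Fock orbit $L^{\rm{Vir}}(c_{1,p},h_{1,s+1})\otimes \Pi\left(\frac{s}{2}\right)$ reachable from $e^{\frac{s}{2}c}$ under the generators of $\Phi$ — is strictly larger than that of $\LL{s}{p}$: Lemma \ref{quotient} of this very paper exhibits an exact sequence in which $\LL{s}{p}$ sits as a \emph{proper} submodule with nonzero quotient $\rho_1\left(\LL{s-\frac{1}{p}}{p}\right)$. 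So the product of the Virasoro and Fock characters does not reproduce the character of $\LL{s}{p}$, and the character of the cyclic submodule generated by the singular vector is not a product of characters of the tensor factors; as written, the character comparison establishes nothing. What actually forces simplicity is the uniqueness of singular vectors in the ambient module (the content of Lemma \ref{ker}): any nonzero submodule of the cyclic highest-weight module $U=\lksl.(v_{1,s+1,j}\otimes e^{\frac{s}{2}c})$ contains a singular vector; $U$ is integrable for the horizontal $\sltwo$ (one must also check that $f(0)^{s+1}$ kills the generator, which you did not address); and the only dominant integral singular vectors in $L^{\rm{Vir}}(c_{1,p},h_{1,s+1})\otimes \Pi(0)^{\frac{1}{2}}$ are the vectors $v_{1,s+1,j'}\otimes e^{\frac{s}{2}c}$ themselves, which already lie in the top component of $U$; hence $U$ is simple. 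Alternatively one can appeal to the known structure of maximal submodules of Weyl modules at the admissible level $k=-2+\frac{1}{p}$. Some argument of this kind must replace your character count.
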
 We present an important Lemma on uniqueness of singular vector in  $ L^{\rm{Vir}} (c_{1,p},  h_{1,s} ) \otimes \Pi (0)^{\frac{1}{2}}$.  A more general version will be studied in \cite{ACGY-inverse}.

\begin{lemma}  \label{ker} Assume that $w$ is a singular vector for $\asltwo$ in $ L^{\rm{Vir}} (c_{1,p},  h_{1,s} ) \otimes \Pi (0)^{\frac{1}{2}}$ with dominant integral weight.
Then, $ w = v_{1, s, j} \otimes e^{\frac{s-1}{2} c}$.
\end{lemma}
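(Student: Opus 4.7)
The plan is to combine the $h(0)$-weight decomposition with a Sugawara $L_0$-eigenvalue comparison to reduce to the bottom of the conformal grading, and then, if necessary, to exploit the explicit form of $e,f$ as charge-shifting vertex operators to rule out the remaining possibilities. Suppose $w$ is singular of $\asltwo$-weight $m \in \Z_{\geq 0}$. Since $h = 2\mu(-1) = \frac{k}{2} c(-1) + d(-1)$ acts only on the $\Pi(0)^{\frac{1}{2}}$ factor, with $h(0)$ having eigenvalue $n$ on $e^{nc/2}$, the vector $w$ must lie in the single charge sector $L^{\rm{Vir}}(c_{1,p}, h_{1,s}) \otimes M_{c,d}(1) \otimes e^{mc/2}$.

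Next, the Sugawara construction forces $L_0^{\rm{Sug}} w = \frac{m(m+2)p}{4} w$. On the charge-$m$ sector the decomposition $L_0^{\rm{Sug}} = L_0^{\rm{Vir}} + L_0^{\rm{Heis}}$ takes its minimum value $h_{1,s} + m/2$ at the bottom of both factors (since $\langle c,c\rangle = 0$ and the background charge gives conformal weight $m/2$ to $e^{mc/2}$). Subtracting produces the excess
\[
N := \frac{m(m+2)p}{4} - h_{1,s} - \frac{m}{2} = \frac{(m - s + 1)(p(m + s + 1) - 2)}{4},
\]
which, for $p \geq 2$ and $s \geq 1$, is strictly negative for $m < s - 1$, zero for $m = s - 1$, and strictly positive for $m > s - 1$. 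The first range is vacuous, and the second forces $w$ to the bottom of both gradings, hence $w \in \C\, v_{1,s,j} \otimes e^{(s-1)c/2}$, which is indeed $\asltwo$-singular by Proposition~\ref{prop:Ls-in-LVir}.

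The main obstacle is the case $m \geq s$, where $N > 0$ permits nontrivial Virasoro and Heisenberg excitations. Here I would invoke the remaining annihilation conditions. The relations $h(n)w = 0$ for $n \geq 1$ cut the Heisenberg excitations down to a rank-one free-field sector, since $\mu(n) = \frac{k}{4} c(n) + \frac{1}{2} d(n)$ imposes one linear relation on the two Heisenberg oscillators at each mode. The condition $e(0) w = (e^c)_0 w = 0$ then provides a first screening-type equation that shifts the $c$-charge by $+1$, while $f(1)w = 0$, using the explicit form $f = [(k+2)\omega - \nu(-1)^2 - (k+1)\nu(-2)]e^{-c}$, gives a second equation that shifts the charge by $-1$ and brings in the Virasoro stress tensor. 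I expect that an induction on the bigrading $(k_1, k_2)$ with $k_1 + k_2 = N$ then rules out any nontrivial solution: at each bigrading level the charge-raising and charge-lowering operators, restricted to the subspace allowed by the Heisenberg highest-weight constraint, form an overdetermined system. The delicate point, and the reason a more conceptual proof of the general statement is deferred to \cite{ACGY-inverse}, is controlling how the Virasoro modes on the left factor interact with the $e^{\pm c}$-screenings on the right; this should ultimately be reduced to the fact that $L^{\rm{Vir}}(c_{1,p}, h_{1,s})$ is irreducible, so its only Virasoro singular vector is the highest-weight vector itself.
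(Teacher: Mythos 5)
The first half of your argument is sound and the arithmetic checks out: with $h=2\mu(-1)=\tfrac{k}{2}c(-1)+d(-1)$ acting only on the lattice factor, a singular vector of weight $m$ does sit in the charge-$m$ sector, and the excess
$N=\tfrac{m(m+2)p}{4}-h_{1,s}-\tfrac{m}{2}=\tfrac{(m-s+1)(p(m+s+1)-2)}{4}$
correctly rules out $m<s-1$ and pins $w$ to $v_{1,s,j}\otimes e^{\frac{s-1}{2}c}$ when $m=s-1$. But the entire content of the lemma lies in the case $m\geq s$, where $N>0$, and there your proof stops being a proof: the passage beginning ``Here I would invoke the remaining annihilation conditions'' and ending with ``I expect that an induction \dots then rules out any nontrivial solution'' is an unexecuted plan. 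You give no reason why the system of conditions $e(0)w=0$, $f(1)w=0$, $h(n)w=0$ should be ``overdetermined'' at each bigrading, and no mechanism by which the irreducibility of $L^{\rm{Vir}}(c_{1,p},h_{1,s})$ would close the argument. Since excluding extra singular vectors at positive excess is precisely what the lemma asserts, this is a genuine gap, not a routine verification left to the reader.

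For comparison, the paper avoids the $m\geq s$ analysis entirely by arguing on the submodule $W$ generated by $w$ rather than on $w$ itself: the operators $e(n)=e^{c}_n$ and $h(n)=2\mu(n)$ of the Borel $\widehat{\mathfrak b}$ act purely on the $\Pi(0)^{\frac{1}{2}}$ factor, and integrability lets one push any nonzero vector of $W$ to an extremal vector of the form $u_1\otimes e^{\frac{m-1}{2}c}$ (no Heisenberg oscillators, maximal charge); the Sugawara modes $L_{\rm{Sug}}(n)$ then strip $u_1$ down to $v_{1,s,j}$, and a direct computation shows $v_{1,s,j}\otimes e^{\frac{m-1}{2}c}$ is singular only for $m=s$. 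If you want to salvage your bottom-up approach, you would need to actually carry out the exclusion for $m\geq s$; otherwise the cleanest fix is to switch to this top-down reduction via the Borel action, for which your charge/Sugawara bookkeeping remains a useful supplement for the range $m\leq s-1$.
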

\begin{proof}

Let $\widehat{\mathfrak b}$ be the Borel subalgebra of $\asltwo$ generated by $e(n) = e^c _n$ and $h(n) = 2 \mu(n)$.
We  consider $ L^{\rm{Vir}} (c_{1,p},  h_{1,s} ) \otimes \Pi (0)^{\frac{1}{2}}$ as a module for $\widehat{\mathfrak b}$.

Assume that $W \subset  L^{\rm{Vir}} (c_{1,p},  h_{1,s} ) \otimes \Pi (0)^{\frac{1}{2}}$ is any  non-zero  $\lksl)$--submodule which is integrable with respect to $\sltwo$. Using action of  $\widehat{\mathfrak b}$, we see that $W$ contains vector $ u_1 \otimes  e^{\frac{m-1}{2} c}$ for certain  $u_1 \in L^{\rm{Vir}} (c_{1,p},  h_{1,s} ) $ and $m \in {\Z}$. By using the action of the Sugawara Virasoro element  $L_{\rm{Sug}}(n)$ we easily get that
$$v_{1, s, j} \otimes  e^{\frac{m-1}{2} c} \in W. $$ Direct calculation then shows that $v_{1, s, j} \otimes  e^{\frac{m-1}{2} c}$ is singular if and only if $m= s$.
\end{proof}

\begin{corollary}
$ \mbox{\rm{Ker}} _{   L^{\rm{Vir}} (c_{1,p}, h_{1, n+1} )   \otimes \Pi(0)^{\frac{1}{2}} } S \cong  \LL{n}{p}. $
\end{corollary}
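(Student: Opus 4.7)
The strategy is to identify $\mbox{Ker}_{L^{\rm{Vir}}(c_{1,p},h_{1,n+1})\otimes\Pi(0)^{\frac{1}{2}}} S$ with the $\lksl$-submodule $\mathcal L$ generated by the vector $v_{1,n+1,0}\otimes e^{\frac{n}{2}c}=e^{-\frac{n}{2}\gamma+\frac{n}{2}c}$; by Proposition \ref{prop:Ls-in-LVir} this submodule is already isomorphic to $\LL{n}{p}$. I would then need to prove the two inclusions $\mathcal L \subseteq \mbox{Ker}\,S$ and $\mbox{Ker}\,S \subseteq \mathcal L$.

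As a preliminary step, I would verify that $S$ commutes with the $\lksl$-action on $V^{\rm{Vir}}(c_{1,p},0)\otimes\Pi(0)^{\frac{1}{2}}$, and hence on the module of interest. Using the factorization $S=\mbox{Res}_z Y(v_{2,1}\otimes e^\nu,z)$ of the Remark together with the pairings $\langle\alpha,c\rangle=1$, $\langle\alpha,\mu\rangle=0$, and $\langle\alpha,\gamma\rangle=1$, the OPEs of $e^\alpha(z)$ with the currents $Y(e,w)=e^c(w)$, $Y(h,w)=2\mu(w)$, and $Y(f,w)$ (in the realization given by $\Phi$) can be shown to be total derivatives in $z$. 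Consequently $\mbox{Ker}\,S$ is an $\lksl$-submodule.

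For $\mathcal L\subseteq\mbox{Ker}\,S$, the key computation is $\langle\alpha,-\tfrac{n}{2}\gamma+\tfrac{n}{2}c\rangle=-\tfrac{n}{2}+\tfrac{n}{2}=0$. Hence the expansion of $Y(e^\alpha,z)$ applied to the Fock vector $e^{-\frac{n}{2}\gamma+\frac{n}{2}c}$ begins at $z^{0}$ and contains only non-negative powers of $z$, so its residue vanishes. Since $S$ commutes with $\lksl$ and annihilates the generator of $\mathcal L$, it annihilates all of $\mathcal L$.

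For the reverse inclusion I would invoke Lemma \ref{ker} with $s=n+1$: every $\asltwo$-singular vector of dominant integral weight in $L^{\rm{Vir}}(c_{1,p},h_{1,n+1})\otimes\Pi(0)^{\frac{1}{2}}$ is of the form $v_{1,n+1,j}\otimes e^{\frac{n}{2}c}$ for some $j=0,\dots,n$, and these span the top $\rho_n$ of $\mathcal L$. Any nonzero $\lksl$-submodule $W$ of $L^{\rm{Vir}}(c_{1,p},h_{1,n+1})\otimes\Pi(0)^{\frac{1}{2}}$ lies in the ordinary category (its conformal weight spaces are finite-dimensional) and hence is generated by, and detected on, its $\asltwo$-singular vectors; applied to $W=\mbox{Ker}\,S$ and combined with the previous step, this forces $\mbox{Ker}\,S\subseteq\mathcal L$, whence equality. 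I expect the main obstacle to be the preliminary commutation step: the factor $e^{-c}$ in $Y(f,w)$ satisfies $\langle -c,\alpha\rangle=-1$, so its OPE with $e^{\alpha}(z)$ has a simple pole, and the required cancellation against the $(k+2)\omega-\nu(-1)^{2}-(k+1)\nu(-2)$ prefactor needed to yield a total derivative must be verified by direct computation.
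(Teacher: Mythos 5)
Your handling of the inclusion $\LL{n}{p} \subseteq \mbox{Ker}\, S$ coincides with the paper's proof: since $\alpha = \tfrac{\gamma}{2p}+\nu$, your computation $\langle \alpha, -\tfrac{n}{2}\gamma+\tfrac{n}{2}c\rangle=0$ is literally the paper's relation $\langle \tfrac{\gamma}{2p}+\nu,\gamma-c\rangle =0$, and the propagation from the generating singular vector to the whole copy of $\LL{n}{p}$ (the paper uses $[S,Q]=0$ to reach the other $v_{1,n+1,j}\otimes e^{\frac{n}{2}c}$, you use that $S$ commutes with $\lksl$) is the same idea. Your insistence on first checking that $S$ commutes with the $\lksl$-action is sound, although the paper takes this for granted ($S$ is the standard screening with $M=\mbox{Ker}_{\Pi(0)}S$ from [FMS], and the Remark exhibits $e^{\alpha}=v_{2,1}\otimes e^{\nu}$ as a weight-one primary).

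The genuine problem is your argument for the reverse inclusion. The claim that every nonzero $\lksl$-submodule of $L^{\rm{Vir}}(c_{1,p},h_{1,n+1})\otimes\Pi(0)^{\frac{1}{2}}$ is ordinary with finite-dimensional conformal weight spaces is false: the ambient module is itself such a submodule and is far from ordinary, since $\Pi(0)^{\frac{1}{2}}$ contains the Fock spaces $M_{c,d}(1)e^{\lambda c}$ for all $\lambda\in\tfrac{1}{2}\Z$ and $e^{-c}$ has conformal weight $-1$ (it occurs in $\Phi(f)$ against a weight-two prefactor), so the conformal weights are unbounded below and every $L(0)$-eigenspace is infinite-dimensional. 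Hence you cannot assert a priori that $\mbox{Ker}\,S$ is semisimple or detected on its dominant-integral singular vectors; Lemma \ref{ker} only controls submodules already known to be $\sltwo$-integrable. (In fairness, the paper's own three-line proof records only the forward inclusion.) The clean way to close the argument is the structure established in Lemma \ref{quotient}: the parity component of $L^{\rm{Vir}}(c_{1,p},h_{1,n+1})\otimes\Pi(0)^{\frac{1}{2}}$ containing $v_{1,n+1,j}\otimes e^{\frac{n}{2}c}$ has irreducible (spectrally flowed) quotient by $\LL{n}{p}$, so any proper submodule containing $\LL{n}{p}$ equals $\LL{n}{p}$; since $S$ does not vanish on that component (for example $\langle\alpha,-\tfrac{n}{2}\gamma+\lambda c\rangle=\lambda-\tfrac{n}{2}=-1$ for $\lambda=\tfrac{n}{2}-1$ produces a nonzero residue) and $\mbox{Ker}\,S$ meets the complementary parity component trivially, this forces $\mbox{Ker}\,S=\LL{n}{p}$.
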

\begin{proof}Since $v_{1,n+1,0} \otimes e^{\frac{n}{2} c} = e^ { -  \frac{n}{2} (\gamma -c)} $ and $S = e^{ \gamma / 2p + \nu} _0$, the relation
$ (  \gamma / 2p + \nu ,  \gamma - c) = 0$
implies that
$ S ( v_{1,n+1,0} \otimes e^{\frac{n}{2} c} ) = 0.$
Since
$ [S, Q] = 0$, we have that
$S   ( v_{1,n+1, j } \otimes e^{\frac{n}{2} c} ) = Q^j S ( v_{1,n+1,0} \otimes e^{\frac{n}{2} c} ) =0. $
\end{proof}
Recall that $\A{p} = \A{p}_0 \oplus \A{p}_1$ and $\Pi(0)^{\frac{1}{2}} = \Pi(0)^{\frac{1}{2}}_0 \oplus \Pi(0)^{\frac{1}{2}}_1$,
so that
\begin{equation}
\A{p} \otimes \Pi(0)^{\frac{1}{2}} = \left(\A{p} \otimes \Pi(0)^{\frac{1}{2}}\right)^{\Z_2} \oplus
\A{p}_1 \otimes \Pi(0) \oplus  \W{p} \otimes \Pi(0)^{\frac{1}{2}}_1.
\end{equation}
We inspect that
\[
\mbox{Ker}_{\A{p}_1 \otimes \Pi(0)}S =  \mbox{Ker}_{ \W{p} \otimes \Pi(0)^{\frac{1}{2}}_1}S = 0,
\]
so that Proposition \ref{prop:VpviaAp} improves to
\begin{corollary}\label{cor;VpviaAp}
$ \V{p} = \mbox{\rm{Ker}}_{\mathcal A (p) \otimes \Pi(0)^{\frac{1}{2}} } S$.
\end{corollary}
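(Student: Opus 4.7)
The plan is to combine Proposition \ref{prop:VpviaAp} with the orthogonal decomposition
$$\A{p} \otimes \Pi(0)^{\frac{1}{2}} = \left(\A{p} \otimes \Pi(0)^{\frac{1}{2}}\right)^{\Z_2} \oplus \A{p}_1 \otimes \Pi(0) \oplus \W{p} \otimes \Pi(0)^{\frac{1}{2}}_1$$
stated just above. In this form the corollary is equivalent to the two vanishing statements $\mbox{Ker}_{\A{p}_1 \otimes \Pi(0)} S = 0$ and $\mbox{Ker}_{\W{p} \otimes \Pi(0)^{\frac{1}{2}}_1} S = 0$. Rather than attacking these directly, I would first compute the kernel on the whole tensor product and then verify that it already lies in the $\Z_2$-invariant summand, which forces the two mismatched kernels to vanish.

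For the global computation, apply the fine decomposition \eqref{eq:decAp} to rewrite $\A{p} \otimes \Pi(0)^{\frac{1}{2}} = \bigoplus_{n \geq 0} \rho_n \otimes L^{\rm{Vir}}(c_{1,p}, h_{1,n+1}) \otimes \Pi(0)^{\frac{1}{2}}$ and then invoke the corollary of Lemma \ref{ker} on each summand. This yields
$$\mbox{Ker}_{\A{p}\otimes \Pi(0)^{\frac{1}{2}}} S = \bigoplus_{n \geq 0} \rho_n \otimes \LL{n}{p}.$$

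The key remaining step is a parity check. By Proposition \ref{prop:Ls-in-LVir}, each $\LL{n}{p}$-summand is generated inside $L^{\rm{Vir}}(c_{1,p}, h_{1,n+1}) \otimes \Pi(0)^{\frac{1}{2}}$ from vectors $v_{1,n+1,j} \otimes e^{\frac{n}{2}c}$ under the $\lksl$-action. These vectors carry $\gamma$-lattice shift $(j-n/2)\gamma$ and $c$-lattice shift $\frac{n}{2}c$; both have $\Z_2$-parity $n \bmod 2$, so they sit in the matched-parity sector. Since the realization \eqref{def-e-3} expresses $e, h, f$ entirely in terms of the $\mu, \nu, c, d$-Heisenberg, the $\lksl$-action shifts the $c$-direction by integers and fixes the $\gamma$-direction, hence preserves the diagonal parity. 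Therefore each $\rho_n \otimes \LL{n}{p}$ lies inside $\A{p}_{n \bmod 2} \otimes \Pi(0)^{\frac{1}{2}}_{n \bmod 2} \subset \left(\A{p} \otimes \Pi(0)^{\frac{1}{2}}\right)^{\Z_2}$, so the global kernel is contained in the $\Z_2$-invariant piece.

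The only mild obstacle is the careful bookkeeping of $\Z_2$-parities on both tensor factors and the verification that $\lksl$ does not spoil the matched parity; no new structural input is required beyond what is already in place. Combining the two steps gives $\mbox{Ker}_{\A{p}\otimes \Pi(0)^{\frac{1}{2}}} S = \mbox{Ker}_{(\A{p}\otimes \Pi(0)^{\frac{1}{2}})^{\Z_2}} S$, and the conclusion then follows from Proposition \ref{prop:VpviaAp}.
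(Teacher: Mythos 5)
Your argument is correct and follows essentially the same route as the paper: the paper also starts from Proposition \ref{prop:VpviaAp} and the $\Z_2$-decomposition of $\A{p}\otimes\Pi(0)^{\frac{1}{2}}$, and reduces the corollary to the vanishing of $\mbox{Ker}_{\A{p}_1\otimes\Pi(0)}S$ and $\mbox{Ker}_{\W{p}\otimes\Pi(0)^{\frac{1}{2}}_1}S$, which it disposes of with a brief ``we inspect that''. Your parity bookkeeping on the $\gamma$- and $c$-lattice shifts of the generators $v_{1,n+1,j}\otimes e^{\frac{n}{2}c}$ is precisely the verification the paper leaves implicit, so this is a correct (and slightly more explicit) version of the same proof.
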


 \subsection{ The  $\sltwo$-action }

 \begin{theorem} \label{thm:Vp-dec}
$\V{p}$ decomposes as a $\sltwo \otimes \lksl$--module as
 $$\V{p}  =  \bigoplus_{n=0} ^{\infty} \rho_n \otimes \LL{n}{p}. $$
 \end{theorem}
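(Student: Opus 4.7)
The plan is to compute the kernel of $S$ on $\A{p} \otimes \Pi(0)^{\frac{1}{2}}$ summand-by-summand, combining three results already established: the screening characterization $\V{p} = \mbox{Ker}_{\A{p} \otimes \Pi(0)^{\frac{1}{2}}} S$ from Corollary \ref{cor;VpviaAp}, the doublet decomposition \eqref{eq:decAp} $\A{p} = \bigoplus_{n \geq 0} \rho_n \otimes L^{\rm Vir}(c_{1,p}, h_{1,n+1})$ as an $\sltwo \otimes L^{\rm Vir}(c_{1,p},0)$-module, and the identification $\mbox{Ker}_{L^{\rm Vir}(c_{1,p}, h_{1,n+1}) \otimes \Pi(0)^{\frac{1}{2}}} S \cong \LL{n}{p}$ from the corollary following Lemma \ref{ker}.

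First, I would substitute the doublet decomposition into Corollary \ref{cor;VpviaAp}, expressing $\V{p}$ as the kernel of $S$ on $\bigoplus_{n \geq 0} \rho_n \otimes L^{\rm Vir}(c_{1,p}, h_{1,n+1}) \otimes \Pi(0)^{\frac{1}{2}}$. The key observation to establish next is that $S$ commutes with the inner Virasoro action on $\A{p}$ (being a Virasoro screening), so $S$ preserves the Virasoro-isotypic summands of $\A{p}$; after tensoring with $\Pi(0)^{\frac{1}{2}}$, $S$ restricts on each summand as $1_{\rho_n} \otimes S_n$, where $S_n$ denotes the restriction of $S$ to $L^{\rm Vir}(c_{1,p}, h_{1,n+1}) \otimes \Pi(0)^{\frac{1}{2}}$. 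Applying $\mbox{Ker}\, S_n \cong \LL{n}{p}$ summand-by-summand then yields
$$
\V{p} = \bigoplus_{n \geq 0} \rho_n \otimes \LL{n}{p}
$$
as $\lksl$-modules, with the $\lksl$-action on each $\LL{n}{p}$ the one induced by the embedding of Proposition \ref{prop:Ls-in-LVir}.

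The hard part will be to match the $\sltwo$-action on the multiplicity space $\rho_n$ with the $\sltwo$-action on $\V{p}$ asserted in Theorem \ref{thm;sl2decomp}. Both actions share the same $e$-generator, namely the screening $Q = e^\gamma_0$; since $\langle \alpha, \gamma\rangle = 1 \geq 0$ we have $[Q, S] = 0$, so $Q$ preserves $\mbox{Ker}\, S$, and on the multiplicity space it sends $v_{1, n+1, j}$ to $v_{1, n+1, j+1}$, matching the outer $\sltwo$-action on $\A{p}$ encoded in \eqref{eq:decAp}. Together with the relations exhibited by the generators \eqref{eq:genVp} (in particular $\overline{\tau}^+_{(p)} = Q \tau^+_{(p)}$, which displays the $\rho_1 \otimes \LL{1}{p}$ summand), this should pin down the $\sltwo$-structure on each $\rho_n$ and complete the identification as $\sltwo \otimes \lksl$-modules.
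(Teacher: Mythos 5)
Your proposal follows essentially the same route as the paper: the paper's proof of Theorem \ref{thm:Vp-dec} is exactly the substitution of the decomposition \eqref{eq:decAp} into Corollary \ref{cor;VpviaAp} and the summand-by-summand application of $\mbox{Ker}_{L^{\rm{Vir}}(c_{1,p},h_{1,n+1})\otimes\Pi(0)^{\frac{1}{2}}}S\cong\LL{n}{p}$, with the identification of the $\sltwo$-action on the multiplicity spaces (via $e=Q$, $f=-\Psi^{-1}Q\Psi$) deferred to the remark following the theorem and Corollary \ref{sl2der}. Your additional observations that $S$ commutes with the Virasoro action and with $Q$ are correct and consistent with what the paper uses.
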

 \begin{proof}
The decomposition of $\A{p}$ \eqref{eq:decAp} yields
 \bea
\A{p} \otimes \Pi(0)^{\frac{1}{2}}  &=& \bigoplus_{n=0} ^{\infty}  \rho_n \otimes  \left(L^{\rm{Vir}} (c_{1,p}, h_{1, n+1})  \otimes \Pi(0)^{\frac{1}{2}}  \right), \nonumber
 \eea
so that
$$ \V{p}  =  \mbox{Ker} _{   \A{p}   \otimes \Pi(0)^{\frac{1}{2}} } S \cong   \bigoplus_{n=0} ^{\infty} \rho_n \otimes \LL{n}{p}. $$
 \end{proof}

Strictly speaking the action of $\sltwo$ is obtained in \cite{ALM} for triplet. But it extends easily for doublet:
\begin{remark} In \cite{ALM} it was proven that the Lie algebra $\sltwo$ acts on $\W{p}$ by derivations.  Let us recall the main steps in the proof.
\begin{itemize}
\item We use decomposition of $\W{p}$ as a $\text{Vir} \otimes \sltwo$--module from \cite{AdM-triplet}.
\item We use the fact that screening operator $Q$ is a derivation and  we put  $e=Q$.
\item We construct an automorphism of order two $\Psi$ of singlet algebra which extends to an autumprhism of $\mathcal W^{(p)}$.
\item The action of operator $f$ is given by $f= - \Psi^{-1} Q \Psi$.
\end{itemize}

But the same proof  with minor modifications implies that  $\sltwo$ acts on   $\mathcal A^{(p)}$  by derivations. So:

\begin{itemize}
\item Using \cite{AdM-doublet}, the  doublet algebra $\A{p}$ is a $\text{Vir} \otimes \sltwo$--module and it decomposes as
$$  \mathcal A^{(p)} =  \bigoplus_{n=0} ^{\infty}  \rho_n \otimes  L^{\rm{Vir}} (c_{1,p}, h_{1, n+1}) . $$
\item The screening operator $Q$ is a derivation  of  $  \mathcal A^{(p)}$  and  we can  put  $e=Q$.
\item The automorphism $\Psi$ of   $\W{p}$, easily extends to an automorphism of order two of $ \mathcal A^{(p)}$ such that
$ \Psi( a^+ ) = a^-. $
\item  The operator $f$ can be again defined as $f= - \Psi^{-1} Q \Psi$.
\end{itemize}

\end{remark}

\begin{theorem} \cite{ALM}
The Lie algebra $\sltwo$ acts on $\A{p}$ by derivations and
$$ \left( \mathcal A^{(p)} \right)^{\sltwo} = L^{\rm{Vir}} (c_{1,p}, 0). $$
\end{theorem}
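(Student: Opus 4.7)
The strategy is the direct adaptation of the Adamovi\'c--Lin--Milas argument for the triplet $\W{p}$ to the doublet $\A{p}$, exactly as sketched in the remark preceding the theorem. I would first invoke the decomposition
\[
\A{p} \;=\; \bigoplus_{n=0}^{\infty} \rho_n \otimes L^{\rm{Vir}}(c_{1,p}, h_{1,n+1})
\]
from \cite{AdM-doublet}, which guarantees that on each isotypic component the Virasoro multiplicity space carries a finite-dimensional $\sltwo$-representation. My task is to realize this abstract $\sltwo$ as derivations of the vertex algebra structure.

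Next I would produce the two nilpotent generators. For the positive one, take $e := Q = \mathrm{Res}_z Y(e^\gamma, z)$, which is a derivation of $\A{p}$ since $\A{p}$ is the kernel of $\widetilde{Q}$ inside $V_{\Z\gamma/2}$ and $[Q,\widetilde Q]=0$ (as computed after \eqref{defVp}); moreover $Q a^- = a^+$ by the very definition of $a^+$. For the negative generator I would construct an order-two automorphism $\Psi$ of $\A{p}$ lifting the one already built for $\W{p}$ in \cite{ALM}. Concretely, $\Psi$ is induced from the involution of the singlet algebra (the Heisenberg reflection $\gamma \mapsto -\gamma$ composed with a suitable sign) and, because the doublet is generated over the triplet by $a^\pm$ and these get exchanged by the lattice involution, $\Psi$ extends to an order-two automorphism of $\A{p}$ with $\Psi(a^+) = a^-$. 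Then set $f := -\Psi^{-1} Q \Psi$; this is a derivation as the conjugate of one, and it sends $a^+ \mapsto -a^-$.

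To verify that $\{e,f,h\}$ with $h := \gamma(0)/p$ satisfy the $\sltwo$-relations as derivations, I would check the commutators directly on the strong generators $a^\pm$ and $\omega$ of $\A{p}$. The action of $h$ is by lattice grading ($a^\pm$ has $h$-weight $\pm 1$, $\omega$ has weight $0$), while $[e,f]$ and $[h,e]$, $[h,f]$ can be computed from $Q a^- = a^+$, $\Psi(a^\pm)=a^\mp$, and $Q a^+ = 0$ (there is no room in weight and $\gamma$-charge), together with the fact that derivations are determined by their values on generators. Since these derivations are $L^{\rm{Vir}}$-linear (because each commutes with the Virasoro field, $Q$ being a screening and $\Psi$ fixing $\omega$), they act only on the multiplicity spaces, and by matching weights on the top vectors $v_{1,n+1,j}$ one identifies this action with the standard $\sltwo$-action on $\rho_n$. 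Finally, the invariants $(\A{p})^{\sltwo}$ pick out precisely the $n=0$ summand, namely $L^{\rm{Vir}}(c_{1,p},0)$.

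The only real obstacle is the construction and well-definedness of $\Psi$ on all of $\A{p}$: for the triplet it is already in \cite{ALM}, but one must check that the lift to the doublet is consistent with the $\widetilde Q$-kernel description, i.e.\ that $\Psi$ preserves $\ker \widetilde Q$ inside $V_{\Z\gamma/2}$. This is a short computation using $\Psi(\gamma)=-\gamma$ and the fact that $\widetilde Q$ is also constructed from $\gamma$, so $\Psi$ conjugates $\widetilde Q$ to a nonzero scalar multiple of itself and thus preserves its kernel. Everything else is bookkeeping on generators.
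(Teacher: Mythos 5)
Your proposal follows essentially the same route as the paper, which does not give a full proof but cites \cite{ALM} and, in the remark immediately preceding the theorem, sketches exactly the steps you elaborate: the decomposition of $\A{p}$ from \cite{AdM-doublet}, setting $e=Q$ (a derivation since $[Q,\widetilde Q]=0$), extending the order-two automorphism $\Psi$ of $\W{p}$ to $\A{p}$ with $\Psi(a^+)=a^-$, and defining $f=-\Psi^{-1}Q\Psi$, with $h=\gamma(0)/p$. Your additional verifications (the $\sltwo$-relations on the strong generators, $Qa^+=0$ by weight and charge considerations, and $\Psi$ preserving $\ker\widetilde Q$) are correct and consistent with the paper's intended argument.
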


This immediately gives an $\sltwo$-action of $ \A{p} \otimes \Pi(0)^{\frac{1}{2}}$ and from the above Theorem we see that this action restricts to an action on the $\V{p}$-subalgebra, and so
\begin{corollary}\label{sl2der}
The Lie algebra $\sltwo$ acts on $\V{p}$ as derivations.
\end{corollary}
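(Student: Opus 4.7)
The plan is to restrict the tensor-extended $\sltwo$-action on $\A{p}\otimes\Pi(0)^{\frac{1}{2}}$ to the subalgebra $\V{p}$. First, the preceding theorem provides a $\sltwo$-action on $\A{p}$ by derivations, realized by $e=Q$, $h=\gamma(0)/p$, and $f=-\Psi^{-1}Q\Psi$ for the order-two automorphism $\Psi$ of $\A{p}$ described in the remark above. Tensoring each such derivation with the identity on $\Pi(0)^{\frac{1}{2}}$ produces an $\sltwo$-action by derivations on the abelian intertwining algebra $\A{p}\otimes\Pi(0)^{\frac{1}{2}}$; the derivation property is immediate from the Leibniz rule applied to a derivation tensored with an identity.

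The key step is to check that this extended action stabilizes $\V{p}$. Tensoring the decomposition \eqref{eq:decAp} with $\Pi(0)^{\frac{1}{2}}$ gives
\[
\A{p}\otimes\Pi(0)^{\frac{1}{2}}\;\cong\;\bigoplus_{n\ge 0}\rho_n\otimes\bigl(L^{\rm{Vir}}(c_{1,p},h_{1,n+1})\otimes\Pi(0)^{\frac{1}{2}}\bigr)
\]
as $\sltwo$-modules, with $\sltwo$ acting on the $\rho_n$-factor. Theorem \ref{thm:Vp-dec} identifies $\V{p}$ with $\bigoplus_n\rho_n\otimes\LL{n}{p}$ inside the right-hand side, so $\V{p}$ is a sum of certain $\sltwo$-isotypic components of $\A{p}\otimes\Pi(0)^{\frac{1}{2}}$ and is therefore automatically $\sltwo$-stable. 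Once stability is in hand, the restricted action is a derivation action on $\V{p}$, because $\V{p}$ is a vertex subalgebra and the Leibniz rule on the ambient algebra passes to it.

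The only substantive point is the stability used in the previous paragraph. It can also be verified directly by checking that each $x\in\sltwo$ satisfies $[x,S]=\lambda_x S$ for some scalar $\lambda_x$ on the ambient algebra, so that $v\in\ker S$ forces $Sxv=xSv-[x,S]v=0$: for instance $[h,S]=S/p$ since $\la\gamma,\alpha\ra=1$, and $[e,S]=[Q,S]=0$ by regularity of the OPE of $e^{\gamma}$ with $e^{\alpha}$, while the $f$-statement follows by conjugating the $e$-statement by $\Psi$. However, Theorem \ref{thm:Vp-dec} already encodes this stability via the isotypic decomposition, so in practice I would simply invoke it rather than redo the screening calculation.
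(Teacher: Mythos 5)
Your proposal is correct and follows essentially the same route as the paper: the paper likewise takes the derivation action of $\sltwo$ on $\A{p}$ from the preceding theorem, extends it to $\A{p}\otimes\Pi(0)^{\frac{1}{2}}$ by acting trivially on the second factor, and observes that it restricts to the subalgebra $\V{p}$. Your additional justification of the stability of $\V{p}=\mathrm{Ker}\,S$ (via the isotypic decomposition, or via $[x,S]=\lambda_x S$) merely makes explicit a step the paper leaves implicit.
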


 Essentially the same argument as the one of Proposition 1.3 of \cite{AdM-triplet} gives now

 \begin{corollary}\label{strong}
 $\V{p}$ is strongly generated by $ x= x(-1) {\bf 1} \otimes 1$, $x \in \{ e, f, h \} $ and the four vectors stated in \eqref{eq:genVp}.
 \end{corollary}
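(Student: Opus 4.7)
Let $U \subseteq \V{p}$ denote the vertex subalgebra strongly generated by $\{e(-1)\mathbf 1, f(-1)\mathbf 1, h(-1)\mathbf 1\}\cup\{\tau^\pm_{(p)},\overline{\tau}^\pm_{(p)}\}$. Since affine vertex algebras are strongly generated by their weight-one currents, $\lksl\subseteq U$. In view of Theorem~\ref{thm:Vp-dec}, which gives $\V{p}=\bigoplus_{n\ge 0}\rho_n\otimes\LL{n}{p}$, and the fact that each $\LL{n}{p}$ is generated as an $\lksl$-module by any of its affine highest-weight vectors at the top level, it will be enough to produce inside $U$, for every $n\ge 0$, a full set of $n+1$ linearly independent affine highest-weight vectors of $\asltwo$-weight $n\omega$. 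Their $\lksl$-orbit then fills out $\rho_n\otimes\LL{n}{p}$, and summing over $n$ yields $U=\V{p}$.

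The candidate vectors are the iterated normally ordered products
$$v_{n,j}:=\ :(\overline{\tau}^+_{(p)})^j\,(\tau^+_{(p)})^{n-j}:, \qquad 0\le j\le n,$$
which obviously lie in $U$. In the free-field realization $\tau^+_{(p)}=e^{\frac{p}{2}\delta}$, so by standard lattice-vertex-algebra calculus one has $:(\tau^+_{(p)})^n: = C_n\, e^{\frac{np}{2}\delta}$ for a non-zero constant $C_n$. Since $Q$ is a derivation of $V_{N^{(p)}}$ that commutes with the $\asltwo$-action and satisfies $Q\tau^+_{(p)}=\overline{\tau}^+_{(p)}$, a Leibniz-rule computation identifies $v_{n,j}$ (up to a non-zero scalar) with $Q^j\, e^{\frac{np}{2}\delta}$. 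Now $e^{\frac{np}{2}\delta}$ is an affine highest-weight vector of weight $n\omega$: the computation $h(0)=-2\beta(0)+\delta(0)$ gives $h(0)\,e^{\frac{np}{2}\delta}= n\,e^{\frac{np}{2}\delta}$, while $e(0)\,e^{\frac{np}{2}\delta}=0$ follows from $\langle \alpha+\beta,\frac{np}{2}\delta\rangle=0$; the annihilation by positive modes of $e,f,h$ is automatic because $e^{\frac{np}{2}\delta}$ is the minimal-weight vector in its $\asltwo$-orbit. Applying $Q^j$ preserves these highest-weight properties because $Q$ commutes with $\asltwo$.

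It remains to argue that $v_{n,0},\dots,v_{n,n}$ are linearly independent, so that they indeed exhaust the multiplicity space $\rho_n$ inside the top of $\rho_n\otimes\LL{n}{p}$. For this I would use the embedding into $\A{p}\otimes\Pi(0)^{1/2}$ afforded by Corollary~\ref{cor;VpviaAp} together with Proposition~\ref{prop:Ls-in-LVir}: under the identification $\frac{p}{2}\delta=-\frac{\gamma}{2}+\frac{c}{2}$, the vector $Q^j e^{\frac{np}{2}\delta}$ maps to (a non-zero multiple of) $v_{1,n+1,j}\otimes e^{\frac{n}{2}c}$, and the $n+1$ vectors $v_{1,n+1,0},\dots,v_{1,n+1,n}$ span the top of $\rho_n$ inside the doublet decomposition \eqref{eq:decAp}. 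Hence the $v_{n,j}$ are linearly independent.

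\textbf{Main obstacle.} The technical heart is the identification $v_{n,j}=\textup{(const)}\cdot Q^j e^{\frac{np}{2}\delta}$ with a \emph{non-zero} constant. One must verify that iterated normally ordered products of lattice exponentials in the isotropic-type direction $\delta$ do not degenerate, and that redistributing $Q$ across such products via the Leibniz rule yields a non-vanishing binomial coefficient. Both points are standard but need to be carried out carefully in the $M\otimes F_{p/2}$ realization, particularly for odd $p$ where $\V{p}$ is only an abelian intertwining algebra and the normal-ordering signs/cocycles must be tracked.
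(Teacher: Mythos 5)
Your proof is correct and is essentially the paper's own argument: the paper offers no details beyond the remark that ``essentially the same argument as Proposition 1.3 of \cite{AdM-triplet}'' applies, and that argument is exactly your scheme of exhibiting the $n+1$ affine highest-weight vectors of $\rho_n\otimes\LL{n}{p}$ as iterated products of the weight-$\frac{p}{2}$ generators $\tau^{+}_{(p)},\overline{\tau}^{+}_{(p)}$ and checking that the relevant structure constants do not vanish. The only point worth adding is that passing from the highest-weight vectors to all of $\rho_n\otimes\LL{n}{p}$ uses zero modes (e.g.\ $f(0)$), so one should observe that $e(0),f(0),h(0)$ and $Q$ preserve the linear span of the seven generators and hence the strongly generated subspace, which upgrades generation to strong generation.
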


Note that $\mathcal V^{(p)}$   has the following structure:

\begin{itemize}
\item If $p \equiv 2 \ (\mbox{mod} \ 4)$,   $\V{p}$ is a $\frac{1}{2}\Z_{\ge 0}$--graded vertex operator superalgebra.
\item If $p \equiv  0 \ (\mbox{mod} \ 4)$,  $\V{p}$ is a  $ \Z_{\ge 0}$--graded vertex operator algebra.
\item If $p \equiv 1, 3 \ (\mbox{mod} \ 4)$, $\V{p}$ is an abelian intertwining algebra.
\end{itemize}

We also have the following vertex subalgebra:
$$( \V{p} )^{\Z_2} = \mbox{ Ker} _{ \W{p} \otimes \Pi(0)^{\frac{1}{2}} } S  \cong   \bigoplus_{n=0} ^{\infty} \rho_{2 n} \otimes \LL{ 2 n}{p}. $$
In all cases,  Since $ ( \V{p} )^{\Z_2} $ is a subalgebra of the \voa{}  $\W{p} \otimes \Pi(0)^{\frac{1}{2}}$, we have that  $  (\V{p} )^{\Z_2}$  has the structure of a  vertex operator algebra.

Moreover, since $\mbox{Aut} (\W{p}) = PSL(2, \C)$ (cf. \cite{ALM}), we get:

\begin{corollary} The group $G= PSL(2, \C)$ acts on $( \V{p} )^{\Z_2}$ as its automorphism group, and we have the following decomposition of  $( \V{p} )^{\Z_2}$ as $G \otimes \lksl$--module:
$$( \V{p} )^{\Z_2} = \bigoplus_{n=0} ^{\infty} \rho_{2 n} \otimes \LL{2n}{p}. $$
\end{corollary}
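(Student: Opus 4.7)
The decomposition statement itself drops out almost directly from Theorem~\ref{thm:Vp-dec}. The plan is to identify the $\Z_2$-action on $\V{p}$ that is implicit in the realization $\V{p}\subset (\A{p}\otimes\Pi(0)^{\frac{1}{2}})^{\Z_2}$: this is the diagonal $\Z_2$ coming from the sign involutions $a^{\pm}\mapsto -a^{\pm}$ on $\A{p}$ and $e^{c/2}\mapsto -e^{c/2}$ on $\Pi(0)^{\frac{1}{2}}$. Under the decomposition of Theorem~\ref{thm:Vp-dec}, I would check that this $\Z_2$ acts by the scalar $(-1)^n$ on the summand $\rho_n\otimes\LL{n}{p}$, which is immediate from the fact that the generators $\widetilde\phi_{s,j}(v_{1,s+1,j}\otimes e^{\frac{s}{2}c})$ of Proposition~\ref{prop:Ls-in-LVir} live in the $(-1)^s$-eigenspace (the doublet-part exponent $\tfrac{s}{2}(\gamma-c)$ has half-integer weight exactly when $s$ is odd). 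Taking invariants then yields
\[
(\V{p})^{\Z_2}=\bigoplus_{n=0}^{\infty}\rho_{2n}\otimes\LL{2n}{p},
\]
which is compatible with the identification $(\V{p})^{\Z_2}=\mbox{Ker}_{\W{p}\otimes\Pi(0)^{\frac{1}{2}}}S$ already recorded above the statement.

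For the group action, I would combine the two natural sources. First, from Theorem~\ref{thm;sl2decomp} the group $SL(2,\C)$ acts on $\V{p}$ by automorphisms. Since this action commutes with the diagonal $\Z_2$ (both act on different tensor factors of the outer decomposition), it restricts to an action on $(\V{p})^{\Z_2}$. On this subalgebra only the odd-dimensional $\sltwo$-representations $\rho_{2n}$ appear, so the central element $-I\in SL(2,\C)$ acts by $(-1)^{2n}=1$ on every isotypic component. Hence the $SL(2,\C)$-action factors through the quotient $PSL(2,\C)=SL(2,\C)/\{\pm I\}$.

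Second, and serving as a consistency check, the cited result $\mbox{Aut}(\W{p})=PSL(2,\C)$ from \cite{ALM} gives a $PSL(2,\C)$-action on $\W{p}$; extending trivially over $\Pi(0)^{\frac{1}{2}}$ one obtains an action on $\W{p}\otimes\Pi(0)^{\frac{1}{2}}$ that commutes with the screening $S$ (since $S$ involves only the Heisenberg/lattice factors and a singular vector for the Virasoro subalgebra fixed by the group), and therefore preserves the subalgebra $(\V{p})^{\Z_2}=\mbox{Ker}\,S$. Matching this action with the one from the previous paragraph is straightforward because both are uniquely determined by their restriction to the $\rho_2\otimes\LL{2}{p}$ component, where they manifestly agree with the adjoint $PSL(2,\C)$-action on $\rho_2\cong\sltwo$.

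The only step that requires genuine care is verifying that the $\Z_2$-eigenvalue on $\rho_n\otimes\LL{n}{p}$ really is $(-1)^n$ rather than being untwisted by some redefinition; this is pinned down by the explicit generators $\tau^{\pm}_{(p)},\overline\tau^{\pm}_{(p)}$ of weight $p/2$, which sit in $\rho_1\otimes\LL{1}{p}$ and carry a single half-integer lattice factor $e^{c/2}$, hence are $\Z_2$-odd. All other assertions (simplicity of the action, the $PSL(2,\C)$-equivariance of the decomposition, the vertex algebra structure on $(\V{p})^{\Z_2}$) are inherited from the ambient \voa{} $\W{p}\otimes\Pi(0)^{\frac{1}{2}}$ and from Theorem~\ref{thm:Vp-dec}.
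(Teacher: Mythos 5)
Your proof is correct and follows essentially the same route as the paper: the decomposition comes from identifying the $\Z_2$-eigenvalue $(-1)^n$ on $\rho_n\otimes\LL{n}{p}$ in the decomposition of Theorem \ref{thm:Vp-dec} (equivalently, restricting to $\mbox{Ker}_{\W{p}\otimes\Pi(0)^{\frac{1}{2}}}S$), and the group action is the $SL(2,\C)$-action on $\V{p}$ descending to $PSL(2,\C)$ on the orbifold. The only cosmetic difference is that the paper invokes $\mbox{Aut}(\W{p})=PSL(2,\C)$ from \cite{ALM} directly, while you deduce the factorization through the centre from the fact that only the odd-dimensional representations $\rho_{2n}$ occur; both observations appear in the paper.
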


 \subsection{Simplicity of $\V{p}$}

 \begin{proposition}\label{simple}
 $\V{p}$ is a simple abelian intertwining algebra.
 \end{proposition}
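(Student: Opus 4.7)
The strategy is to classify ideals via the $\sltwo \otimes \lksl$-decomposition of Theorem~\ref{thm:Vp-dec}. Any ideal $I$ of $\V{p}$ is automatically stable under $\lksl \subset \V{p}$ and, by Corollary~\ref{sl2der}, under the $\sltwo$-derivation action. Hence $I$ is an $\sltwo \otimes \lksl$-submodule of $\V{p}$. The summands $\rho_n \otimes \LL{n}{p}$ are pairwise non-isomorphic simple $\sltwo \otimes \lksl$-modules (their $\lksl$-constituents $\LL{n}{p}$ are already non-isomorphic), so every such submodule is necessarily of the form
\[
I = \bigoplus_{n \in S} \rho_n \otimes \LL{n}{p}, \qquad S \subset \Z_{\geq 0}.
\]
Since $\rho_0 \otimes \LL{0}{p} = \lksl$ contains the vacuum $\mathbf 1$, simplicity will follow once I show that any non-empty $S$ contains $0$.

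Suppose, for contradiction, that $n_0 := \min S \geq 1$, and let me exhibit an element of $\rho_0 \otimes \LL{0}{p}\cap I$ produced from a mode product of two elements of $\rho_{n_0} \otimes \LL{n_0}{p}\subset I$ of opposite external and internal $h$-weights. Working inside the free-field realization $\V{p} \subset M \otimes F_{\frac{p}{2}} \subset V_{N^{(p)}}$, the vectors
\[
v_1 = e^{\frac{n_0 p}{2}\delta}, \qquad v_2 = f(0)^{n_0}\, Q^{n_0}\, e^{\frac{n_0 p}{2}\delta}
\]
both lie in $I$ by $\sltwo\otimes\lksl$-invariance of $I$, and they carry respectively external/internal $h$-weights $(-n_0,+n_0)$ and $(+n_0,-n_0)$. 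The lattice exponential in $v_2$ simplifies to $e^{-\frac{n_0 p}{2}\delta}$ (up to normal-ordered Heisenberg corrections), so the most singular term of the OPE $Y(v_1,z)v_2$ has pole order $\langle \frac{n_0 p}{2}\delta, \frac{n_0 p}{2}\delta \rangle = \frac{n_0^2 p}{2}$, and its leading coefficient lies in the $(0,0)$-weight space. The proposed mode $(v_1)_m v_2$ for the appropriate $m$ is then an element of $I$ whose $\sltwo \otimes \lksl$-isotypic component in $\rho_0 \otimes \LL{0}{p}$ is a non-zero multiple of $\mathbf 1$. This contradicts $n_0 \geq 1$.

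The main obstacle is guaranteeing that this leading coefficient does not vanish due to cancellations among the $\alpha,\beta$-factors appearing in the explicit formulas for $f(0)^{n_0}$ and $Q^{n_0}$, and that the projection to the $\rho_0\otimes\LL{0}{p}$-isotypic block is non-zero rather than being absorbed into some $\rho_n\otimes\LL{n}{p}$ with $n>0$. My preferred route around the direct verification is to transport the pairing to $\A{p}\otimes\Pi(0)^{\frac{1}{2}}$ using Corollary~\ref{cor;VpviaAp}: through the doublet decomposition \eqref{eq:decAp}, the pairing of the corresponding highest-weight vectors is controlled by the Virasoro fusion rules inside $\A{p}$, which imply that $L^{\rm{Vir}}(c_{1,p},h_{1,n_0+1})\boxtimes L^{\rm{Vir}}(c_{1,p},h_{1,n_0+1})$ contains $L^{\rm{Vir}}(c_{1,p},0)$ as a subquotient, while the lattice pairing $e^{\frac{n_0}{2}c} \otimes e^{-\frac{n_0}{2}c}$ on the $\Pi(0)^{\frac{1}{2}}$-side clearly contributes a non-zero constant term. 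Combining the two factors produces the required non-zero element of $\rho_0\otimes\LL{0}{p}$ in $I$, forcing $0 \in S$ and completing the proof.
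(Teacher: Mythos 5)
Your overall strategy --- locate a putative ideal inside the decomposition of Theorem \ref{thm:Vp-dec}, take a minimal block $\rho_{n_0}\otimes\LL{n_0}{p}$, and force the vacuum into the ideal --- is the same skeleton as the paper's argument, but your execution of the crucial non-vanishing step has a genuine gap. The paper descends one step at a time: from the singular vector $v_{1,n_0+1,j}\otimes e^{\frac{n_0}{2}c}$ (identified via Lemma \ref{ker}) it applies modes of the four generators \eqref{eq:genVp}, which \emph{are} elements of $\V{p}$, and the only non-vanishing input needed is that some mode of $a^{\pm}$ maps $v_{1,n_0+1,j}$ to a non-zero multiple of $v_{1,n_0,j'}$ --- which follows from the known simplicity of $\A{p}$ and its generation by $a^{\pm}$. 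You instead try to jump from $n_0$ directly to $0$ by pairing two extreme-weight vectors, and the required non-vanishing of the vacuum coefficient of $Y(v_1,z)v_2$ is exactly what you cannot get from ``Virasoro fusion rules.'' Fusion rules only say that the space of intertwining operators of type $\binom{L^{\rm{Vir}}(c_{1,p},0)}{L^{\rm{Vir}}(c_{1,p},h_{1,n_0+1})\ L^{\rm{Vir}}(c_{1,p},h_{1,n_0+1})}$ is non-zero; they do not say that the \emph{particular} map induced by the vertex operator $Y$ of $\V{p}$, restricted to the block $\rho_{n_0}\otimes\LL{n_0}{p}$ and projected to $\rho_0\otimes\LL{0}{p}$, is non-zero. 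Indeed, if $\V{p}$ had a proper ideal containing that block, this induced map would vanish identically while the abstract fusion rules would be unchanged --- so the argument is circular at precisely the point it needs to bite. Your proposed factorization into (Virasoro pairing)$\times$(lattice pairing) is also not available: in the realization of Corollary \ref{cor;VpviaAp} the operator $f$ maps to $[(k+2)\omega-\nu(-1)^2-(k+1)\nu(-2)]e^{-c}$, so $f(0)^{n_0}$ mixes the $L^{\rm{Vir}}$ and $\Pi(0)^{\frac{1}{2}}$ tensor factors and $v_2$ is not a pure tensor $u\otimes e^{-\frac{n_0}{2}c}$.

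Two secondary points. First, your claim that an ideal is a submodule for the external $\sltwo$ is not automatic: those derivations are zero modes of $e^{\gamma}$ (and its conjugate), which do \emph{not} lie in $\V{p}$ (one checks $\widetilde{Q}e^{\gamma}\neq 0$), and derivations of a vertex algebra need not preserve ideals. The paper sidesteps this by using only the $\lksl$-module structure of $I$ together with Lemma \ref{ker} on singular vectors; you should do the same, which costs you nothing since the extreme-weight vectors you want are exactly the singular vectors that lemma provides. Second, the leading (most singular) coefficient of $Y(v_1,z)v_2$ lands in the $(0,0)$ weight space but not in conformal weight $0$; to isolate a multiple of $\mathbf 1$ you must take the mode $(v_1)_{2\Delta-1}v_2$ with $\Delta=\operatorname{wt}v_1$, i.e.\ the invariant-pairing coefficient, whose non-vanishing is an honest Dotsenko--Fateev-type computation with $n_0$ screenings --- doable, but not free, and not what your write-up supplies. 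I recommend replacing the one-shot pairing by the paper's one-step descent using the generators $\tau^{\pm}_{(p)},\overline{\tau}^{\pm}_{(p)}$ and the simplicity of $\A{p}$.
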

 \begin{proof}
 Assume that $ 0 \ne I  \subseteq \V{p}$  is a non-trivial ideal in $\V{p}$.  Then $I$ is a  $\lksl$--module in $\rm{KL}_{k}$ for $k=-2 +1/p$, and therefore it must contain a non-trivial singular vector with dominant weight. Using  Lemma   \ref{ker}  we get that
 $$ v_{1, n_0+1, j} \otimes e^{\frac{n_0}{2} c } \in I$$
 for certain $n_0 \in {\Z}_{\ge 0}$, $ 0 \le j \le n_0$.  We can take    $n_0$ to be minimal   with this property.  Assume that $n_0 >0$. Since $\A{p}$ is simple and generated by $a^{\pm}$,
  there is $m \in \frac{1}{2} {\Z}$, such that
 $$ a^+ _m  v_{1, n_0 +1, j}   =  C     v_{1, n_0 , j' }  \quad  \mbox{or}  \quad a^-_m  v_{1, n_0 +1, j}   =  C   v_{1, n_0 , j' } $$
 for certain $C \ne 0$ and $ 0 \le j' \le  n_0 -1$.
 Now applying the action of the four generators  \eqref{eq:genVp} of $\V{p}$ on    $v_{1, n_0+1, j} \otimes e^{\frac{n}{2} c } $ we get
 $$v_{1, n_0, j' } \otimes e^{\frac{n_0-1}{2} c  }  \in I. $$
 This  is in contradiction with minimality of  $n_0$. Therefore $n_0 = 0$, and ${\bf 1 } \in I$. So $I= \V{p}$. This proves the simplicity of  $ \V{p}$
 \end{proof}

 \section{ The  $\R{p}$ vertex algebra}\label{sec:Rp}

 The three abelian intertwining algebras $\V{p}, \A{p}$ and $F_{-\frac{p}{2}}$
 have  a natural $U(1)$-action. In the first two cases it is just obtained by exponentiating the action of the Cartan subalgebra of $\sltwo$, while in the last one it is obtained by exponentiating $\varphi(0)$. This action gives the decompositions in terms of $(\V{p})^{U(1)}\otimes U(1), \A{p})^{U(1)}\otimes U(1)$ and
 $M_\varphi(1) \otimes U(1)$ modules
 \begin{equation}
 \begin{split}
 \V{p} &\cong  \bigoplus_{\ell\in \Z} \V{p}_\ell \otimes \C_\ell, \\
 \A{p} &\cong  \bigoplus_{\ell\in \Z} \A{p}_\ell \otimes \C_\ell, \\
 F_{-\frac{p}{2}} &\cong \bigoplus_{\ell\in \Z} M_\varphi(1, \ell) \otimes \C_\ell,
 \end{split}
 \end{equation}
 with
  \begin{equation}
 \begin{split}
&\V{p}_\ell  \cong \bigoplus_{s=0}^{\infty} \LL{\vert \ell  \vert + 2 s }{p} \\
&\A{p}_\ell \cong \bigoplus_{s=0}^{\infty} L^{\rm{Vir}} (c_{1,p}, h_{1, \vert \ell \vert+2s+1})  \\
& M_\varphi(1, \ell) \cong M_{\varphi}(1).e^{\ell\frac{p\varphi}{2}}
 \end{split}
 \end{equation}
 The $\B{p}$-algebra of \cite{CRW} is the diagonal $U(1)$-orbifold of $\A{p} \otimes F_{-\frac{p}{2}}$
 \begin{equation}
 \begin{split}
 \B{p} &= \left(\A{p} \otimes F_{-\frac{p}{2}} \right)^{U(1)}\\
&= \bigoplus_{\ell \in \Z}  \A{p}_\ell \otimes M_\varphi(1, -\ell) \\
&= \bigoplus_{\ell \in \Z}  \bigoplus_{s=0}^{\infty} L^{\rm{Vir}} (c_{1,p}, h_{1, \vert \ell \vert+2s+1})  \otimes M_\varphi(1, -\ell).
 \end{split}
 \end{equation}
 Similarly the
 diagonal $U(1)$-orbifold of $\V{p} \otimes F_{-\frac{p}{2}}$ is
 \begin{equation}
 \begin{split}
 \widetilde{ \R{p}} :&= \left(\V{p} \otimes F_{-\frac{p}{2}} \right)^{U(1)}\\
&= \bigoplus_{\ell \in \Z}  \V{p}_\ell \otimes M_\varphi(1, -\ell) \\
&= \bigoplus_{\ell \in \Z}  \bigoplus_{s=0}^{\infty} \LL{\vert \ell  \vert + 2 s }{p} \otimes M_\varphi(1, -\ell).
 \end{split}
 \end{equation}
The vectors \eqref{for-e12-p} are in $\widetilde{ \R{p}}$ and hence $\R{p}\subset \widetilde{ \R{p}}$. We will prove in a moment that they actually coincide.

Recall \cite{CRW}  that the $\B{p}$ algebra  is generated by
$ a^+ \otimes e^{\varphi/ 2}, \ a^{-} \otimes e^{-\varphi / 2}, \varphi(-1)$  and  $\omega$.
The $U(1)$-invariant part of the isomorphism of Corollary \ref{cor;VpviaAp} says that
$$ \widetilde{\R{p}} = \mbox{Ker} _{  \B{p} \otimes \Pi(0)^{\frac{1}{2}} } S. $$

\subsection{Identification of the quotient}

As before, we assume that $k=-2+\frac{1}{p}$ for $p$ a positive integer and we denote by $\LL{s}{p}$ the simple highest-weight module of $\lksl$ of highest-weight $s\omega_1$.
Let $s$ be a non-negative integer, then from Lemma \ref{ker} we have that
\begin{equation}
\begin{split}
 \left( L^{\rm{Vir}} (c_k, h_{1,s+1} ) \otimes  \Pi(0)\right)^{\rm{int}} &\cong \begin{cases}  \LL{s}{p} & \qquad \text{if} \ s \ \text{is even} \\ 0 & \qquad \text{if} \ s \ \text{is odd} \end{cases}\\
 \left(L^{\rm{Vir}} (c_k, h_{1,s+1} ) \otimes  \Pi(0)_1^{\frac{1}{2}}\right)^{\rm{int}} &\cong \begin{cases}     0  & \qquad \text{if} \ s \ \text{is even} \\ \LL{s}{p}  & \qquad \text{if} \ s \ \text{is odd} \end{cases}.
\end{split}
\end{equation}
One can also define twisted modules, e.g. let $w_s$ be a twisted highest-weight vector satisfying
$$ e( m-1) w_s = 0,\qquad  f(m+2) w_s = 0, \qquad  h(m)  w_s =  \delta_{m,0} (s-2)  w_s \quad \text{for} \ m \ge 0.$$
  Then $w_s$ generates a module that we denote by
  $ \rho_1 (\LL{s-\frac{1}{p}}{p} ).$

  \begin{lemma} \label{quotient} Let $k=-2+\frac{1}{p}$ with $p$ a positive integer and $s$ a non-negative integer, then
  \begin{enumerate}
  \item For $s$ even
  \[
  0 \rightarrow \LL{s}{p} \rightarrow L^{\rm{Vir}} (c_k, h_{1,s+1} ) \otimes  \Pi(0) \rightarrow  \rho_1 \left(\LL{s-\frac{1}{p}}{p} \right) \rightarrow 0
  \]
  \item For $s$ odd
  \[
  0 \rightarrow \LL{s}{p} \rightarrow L^{\rm{Vir}} (c_k, h_{1,s+1} ) \otimes  \Pi(0)_1^{\frac{1}{2}} \rightarrow  \rho_1 \left(\LL{s-\frac{1}{p}}{p} \right) \rightarrow 0
  \]
  \end{enumerate}
  \end{lemma}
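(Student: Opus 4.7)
The plan is to prove both exact sequences in parallel. The left-hand injection is already at hand from Proposition \ref{prop:Ls-in-LVir} and Lemma \ref{ker}: the vector $v_{1,s+1,0} \otimes e^{\frac{s}{2}c}$ is $\asltwo$-singular with dominant integral weight and generates a copy of $\LL{s}{p}$. Since $e^{\frac{s}{2}c}$ lies in $\Pi(0)$ precisely when $s$ is even and in $\Pi(0)_1^{\frac{1}{2}}$ precisely when $s$ is odd, this copy of $\LL{s}{p}$ embeds into the appropriate parity component. Lemma \ref{ker} moreover guarantees that it is the maximal integrable submodule in each case, so the only real work is to identify the quotient.

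The natural candidate for the top of the quotient is $\bar w$, the image of $v_{1,s+1,0} \otimes e^{\frac{s-2}{2}c}$, which sits in the same $\Z_2$-parity component as the embedded $\LL{s}{p}$. Using the free-field realization (\ref{def-e-3}) together with $\langle c,c\rangle = 0$, $\langle c,\mu\rangle = 1$ and $\langle c,\nu\rangle = -1$, I would verify four relations that together say $\bar w$ is a twisted highest-weight vector of type $w_s$. First, regularity of the OPE $e^c(z)\,e^{\frac{s-2}{2}c}(w)$ (since $\langle c,c\rangle = 0$) immediately gives $e(n)\bar w = 0$ for $n \geq 0$. Second, a short lattice computation shows that $e(-1)(v_{1,s+1,0} \otimes e^{\frac{s-2}{2}c})$ is (up to a nonzero cocycle factor) equal to $v_{1,s+1,0} \otimes e^{\frac{s}{2}c}$, hence lies in the embedded $\LL{s}{p}$, so $e(-1)\bar w = 0$ in the quotient. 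Third, the Heisenberg identity $\mu(0)\,e^{\frac{s-2}{2}c} = \tfrac{s-2}{2}\,e^{\frac{s-2}{2}c}$ forces $h(0)\bar w = (s-2)\bar w$ and $h(n)\bar w = 0$ for $n \geq 1$. Fourth, expanding $f(z)$ by (\ref{def-e-3}) and using that $v_{1,s+1,0}$ is Virasoro primary of weight $h_{1,s+1}$ yields $f(n)\bar w = 0$ for $n \geq 2$; large $n$ are automatic by conformal-weight considerations, leaving only a handful of borderline cases to check explicitly.

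These relations produce a well-defined module map
\[
\rho_1\!\left(\LL{s-\tfrac{1}{p}}{p}\right) \longrightarrow \bigl(L^{\rm{Vir}}(c_k, h_{1,s+1}) \otimes \Pi(0)^{\epsilon}\bigr) \big/ \LL{s}{p},
\]
where $\Pi(0)^{\epsilon} = \Pi(0)$ for $s$ even and $\Pi(0)_1^{\frac{1}{2}}$ for $s$ odd. Upgrading this morphism to an isomorphism I would carry out by matching graded characters: the characters of $L^{\rm{Vir}}(c_{1,p}, h_{1,s+1})$, of $\Pi(0)$ and $\Pi(0)_1^{\frac{1}{2}}$, and of $\LL{s}{p}$ are all explicit in the literature, while the character of $\rho_1(\LL{s-\frac{1}{p}}{p})$ is obtained from that of $\LL{s-\frac{1}{p}}{p}$ by the standard spectral-flow shift; a short manipulation then yields the identity of graded dimensions that forces surjectivity and injectivity, and hence the short exact sequence.

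The main technical obstacle will be the $f$-calculation, which requires careful bookkeeping between Virasoro modes acting on $v_{1,s+1,0}$ and Heisenberg modes acting on $e^{\frac{s-2}{2}c}$ via the $e^{-c}$-factor in $f$. A cleaner alternative is to precompose the $\asltwo$-action on the ambient tensor product with the spectral-flow automorphism $\rho_{-1}$; then $\bar w$ becomes an ordinary $\asltwo$-singular vector of dominant integral weight, and a suitably twisted version of Lemma \ref{ker} pins down the corresponding integrable submodule, so that undoing the flow recovers the identification of the quotient with $\rho_1(\LL{s-\frac{1}{p}}{p})$.
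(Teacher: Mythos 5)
Your construction is the same as the paper's: the injection via Proposition \ref{prop:Ls-in-LVir} and Lemma \ref{ker}, the candidate vector $w_s = v_{1,s+1}\otimes e^{\frac{s-2}{2}c}$ for the top of the quotient, and the verification that its image satisfies the twisted highest-weight conditions (with $e(-1)w_s$ landing on the singular vector that generates the embedded $\LL{s}{p}$) all match the paper's proof. One small slip in your pairing data: from $c=\tfrac{2}{k}(\mu-\nu)$, $\langle\mu,\mu\rangle=-\langle\nu,\nu\rangle=\tfrac{k}{2}$ and $\langle\mu,\nu\rangle=0$ one gets $\langle c,\nu\rangle=+1$, not $-1$; this feeds directly into the $f$-computation you defer, so it should be fixed before that bookkeeping is attempted.

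Where you genuinely diverge is the identification of the quotient, and this is where your argument has a gap. The paper first proves that $w_s$ is \emph{cyclic} in $L^{\rm{Vir}}(c_k,h_{1,s+1})\otimes\Pi(0)$ (apply $e(-1)$ and $f(1)$ to sweep through all $v_{1,s+1}\otimes e^{\frac{s}{2}c+mc}$, then the Borel $\widehat{\mathfrak b}$ and the Sugawara modes to fill out the rest), and then shows via the $\widehat{\mathfrak b}$-action that $\overline{w}_s$ is the \emph{unique} singular vector of the quotient; cyclicity plus uniqueness of the singular vector gives simplicity and hence the identification with $\rho_1\bigl(\LL{s-\frac{1}{p}}{p}\bigr)$, with no character input. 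Your route instead relies on matching graded characters, but the character of $\LL{s-\frac{1}{p}}{p}$ — a simple highest-weight module at the non-admissible level $k=-2+\tfrac1p$ with non-integral highest weight $(s-\tfrac1p)\omega_1$ — is not "explicit in the literature" in any way independent of exactly this kind of free-field realization, so the step is at best unsubstantiated and at worst circular. Moreover, without that character identity you have no surjectivity: your twisted highest-weight conditions only give a map out of a twisted Verma module onto the submodule generated by $\overline{w}_s$, and nothing you prove forces that submodule to be the whole quotient. The missing ingredient is precisely the paper's cyclicity argument; your closing alternative (precompose with the spectral flow $\rho_{-1}$ and rerun a twisted Lemma \ref{ker}) is the right way to supply it and is essentially what the paper does.
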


  \begin{proof}
Consider the vector
  $$  w_s = v_{1, s+ 1} \otimes e^{ \frac{s}{2} c - c}. $$
  Let us prove that $w_s$ is cyclic.
    Let $ W= \lksl. w_s$.  Assume that $s $ is even (the case $s$ is odd is completely analogous).  By applying operators $e(-1)$ and $f(1)$ we get
  $$  v_{1, s+ 1} \otimes e^{ \frac{s}{2} c + m  c} \in W \quad (\forall m \in {\Z}). $$
  Using  the action of $\widehat{ \mathfrak b}$, we get that
  $$ v_{1, s+1} \otimes z  \in W  \quad (\forall z  \in  \Pi(0)).  $$
   Then applying action of $L_{sug}(n)$ we obtain
   $$L^{\rm{Vir}} (c_k, h_{1,s+1} ) \otimes  \Pi(0) = W.$$
 Next we notice that
  $e(-1) w_s =  v_{1, s+ 1} \otimes e^{ \frac{s}{2} c } $ which is singular in   $L^{\rm{Vir}} (c_k, h_{1,s+1} ) \otimes  \Pi(0)^\frac{1}{2}$.
  So in the quotient module $\overline W_s$,   the vector
  $$ \overline w_s = w_s +   \mathcal L_s ^{(p)} $$ satisfies twisted highest weight condition:
  $$ e( m-1) \overline w_s = 0,\qquad  f(m+2) \overline w_s = 0, \qquad  h(m)  \overline w_s =  \delta_{m,0} (s-2)  \overline w_s \quad \text{for} \ m \ge 0.$$
    Applying again the action of  $\widehat{ \mathfrak b}$ we see that $ \overline w_s$ is unique singular vector in $\overline W_s$.
  We conclude     that the quotient module is isomorphic to the module $ \rho_1 \left(\LL{s-\frac{1}{p}}{p} \right)$.
  \end{proof}
  As a consequence, we get another nice characterization of our algebras:
\begin{corollary}\label{cor:int}
The algebras satisfy
$$ \V{p}  =  \left(  \A{p}   \otimes \Pi(0)^{\frac{1}{2}} \right)^{\rm{int}} \qquad \text{and} \qquad
 \widetilde{\R{p}} =  \left(  \B{p} \otimes \Pi(0)^{\frac{1}{2}}\right)^{\rm{int}}.$$
\end{corollary}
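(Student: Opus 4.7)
The plan is to prove both equalities summand-by-summand, combining Theorem \ref{thm:Vp-dec} with the two integrability identities displayed at the start of Section \ref{sec:Rp} (which are themselves consequences of Lemma \ref{ker}). All the real content already sits in those results; what remains is structural bookkeeping.

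For the first equality I would begin with the easy inclusion $\V{p}\subseteq (\A{p}\otimes \Pi(0)^{\frac{1}{2}})^{\rm{int}}$: by Theorem \ref{thm:Vp-dec}, as an $\lksl$-module $\V{p}$ is a direct sum of the simple integrable modules $\LL{n}{p}$ with finite-dimensional multiplicity spaces $\rho_n$, and is therefore integrable. For the reverse inclusion I would tensor decomposition \eqref{eq:decAp} with $\Pi(0)^{\frac{1}{2}}$ to obtain
\begin{equation*}
\A{p}\otimes \Pi(0)^{\frac{1}{2}} \;\cong\; \bigoplus_{n\ge 0} \rho_n \otimes \bigl( L^{\rm{Vir}}(c_{1,p},h_{1,n+1}) \otimes \Pi(0)^{\frac{1}{2}} \bigr).
\end{equation*}
Via the embedding \eqref{def-e-3}, the $\lksl$-action factors through the second tensor factor, so $\rho_n$ merely plays the role of a multiplicity space. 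Splitting $\Pi(0)^{\frac{1}{2}} = \Pi(0)\oplus \Pi(0)_1^{\frac{1}{2}}$ and applying the two integrability identities at the start of Section \ref{sec:Rp} identifies $(L^{\rm{Vir}}(c_{1,p},h_{1,n+1}) \otimes \Pi(0)^{\frac{1}{2}})^{\rm{int}} \cong \LL{n}{p}$ for every $n\ge 0$ (the even parity component contributes when $n$ is even and the odd one when $n$ is odd). Summing and comparing with Theorem \ref{thm:Vp-dec} then yields $(\A{p}\otimes \Pi(0)^{\frac{1}{2}})^{\rm{int}} = \V{p}$.

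For the second equality the identical strategy applies, now using the decomposition $\B{p} \cong \bigoplus_{\ell\in\Z,\, s\ge 0} L^{\rm{Vir}}(c_{1,p},h_{1,|\ell|+2s+1}) \otimes M_\varphi(1,-\ell)$ recalled at the start of Section \ref{sec:Rp}. Here the Heisenberg module $M_\varphi(1,-\ell)$ takes over the role of multiplicity space for the $\lksl$-action, since $\varphi$ is orthogonal to the currents $\alpha,\beta,\delta$ generating $\lksl$ inside the ambient algebra. Passage to the integrable submodule commutes with the direct sum and with tensoring by this Heisenberg factor, so the same identification delivers
\begin{equation*}
\bigl(\B{p}\otimes \Pi(0)^{\frac{1}{2}}\bigr)^{\rm{int}} \;\cong\; \bigoplus_{\ell\in\Z,\,s\ge 0} \LL{|\ell|+2s}{p} \otimes M_\varphi(1,-\ell) \;=\; \widetilde{\R{p}}.
\end{equation*}

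There is no serious obstacle once Lemma \ref{ker} is in hand: the proof reduces to tracking which parity component of $\Pi(0)^{\frac{1}{2}}$ supplies the integrable piece at each $n$ (respectively at each pair $(\ell,s)$) and to observing that passage to the integrable submodule commutes with infinite direct sums and with tensoring by multiplicity spaces on which $\lksl$ acts trivially. The only point worth checking carefully is that the Heisenberg current $\varphi$ is genuinely inert under the $\lksl$-action in the ambient algebra, so that $M_\varphi(1,-\ell)$ really acts as a trivial multiplicity space rather than carrying additional $\asltwo$-structure.
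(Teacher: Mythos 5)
Your proposal is correct and matches the paper's intended argument: the paper derives this corollary directly from the summand-wise integrability identities at the start of Section \ref{sec:Rp} (themselves consequences of Lemma \ref{ker}, with Lemma \ref{quotient} confirming that the quotient is a non-integrable twisted module), combined with the decompositions of $\A{p}$ and $\B{p}$ and the $U(1)$-invariant part of Corollary \ref{cor;VpviaAp} — exactly the bookkeeping you carry out. The only point worth making explicit is that your final step compares isomorphism classes, so you should invoke the inclusion $\V{p}\subseteq(\A{p}\otimes\Pi(0)^{\frac{1}{2}})^{\rm{int}}$ together with finite-dimensionality of graded pieces to upgrade the isomorphism to an equality of subspaces.
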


\section{The case $p=1$}\label{sec:5}

The case $p=1$ can be realized very similarly to the case $p\geq 1$. The only difference is that one doesn't need the screening operators $Q$ and $\widetilde{Q}$.
Consider the algebras $\A{1}= F_{\frac{1}{2}}$ and $\V{1}$, $\A{1}$ is an abelian intertwining algebra that is a simple current extension of $L_1(\sltwo)$:
$$ \A{1} =  L_1(\sltwo) \oplus   \LL{1}{1}. $$
Thus $\A{1}$ has the natural $\sltwo \times$ Vir action so that
$$ \A{1}  = \bigoplus_{n=0} ^{\infty}  \rho_n \otimes L ^{\rm{Vir}} \left(1, \frac{n^2}{4}\right). $$

By  Lemmas \ref{ker} and \ref{quotient} we have the following realization of irreducible $\asltwo$--module  $\LL{n}{1} =L_{-1} (n \omega_1) $, which has the  highest weight $-(1+n) \Lambda_0 + n\Lambda_1$:
$$ \LL{n}{1} = L_{-1} (n \omega_1)   = \left( L ^{\rm{Vir}} (1, \frac{n^2}{4}) \otimes \Pi(0)^{\frac{1}{2}} \right) ^{\rm{int}} . $$

Define $$\V{1} =\left(  \A{1} \otimes \Pi(0)^{\frac{1}{2}}  \right) ^{\rm{int} } $$
and apply the same arguments as for $\V{p}$ to give
\begin{proposition} $\V{1}$ is an abelian intertwining algebra, $\sltwo$ acts on $\V{1}$  by derivations, and we have the following decomposition of $\V{1}$ as $\sltwo \otimes L_{-1}(\sltwo)$-module:
$$\V{1} = \bigoplus_{n=0} ^{\infty} \rho_n \otimes \LL{n}{1}. $$
\end{proposition}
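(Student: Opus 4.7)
The plan is to mirror the proofs for $p \ge 2$ (Theorem \ref{thm:Vp-dec} and Corollary \ref{sl2der}), but with the simplification that no screening operator is needed because $\A{1} = F_{\frac{1}{2}}$ and because $L^{\rm{Vir}}(1,0)$ appears directly as the $\sltwo$-invariants of $\A{1}$. Concretely, I would argue that the decomposition of $\V{1}$ follows termwise from the displayed decomposition
\[
\A{1} \otimes \Pi(0)^{\frac{1}{2}} \;=\; \bigoplus_{n \ge 0} \rho_n \otimes \Bigl( L^{\rm{Vir}}\bigl(1,\tfrac{n^{2}}{4}\bigr) \otimes \Pi(0)^{\frac{1}{2}} \Bigr),
\]
combined with the $p=1$ specialization of Lemma \ref{ker} and Lemma \ref{quotient} asserted just before the proposition, namely $\bigl( L^{\rm{Vir}}(1, n^{2}/4) \otimes \Pi(0)^{\frac{1}{2}} \bigr)^{\rm{int}} \cong \LL{n}{1}$. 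Taking the maximal $\sltwo$-integrable submodule commutes with the direct sum since $\rho_{n}$ is already finite dimensional (so plays no role in the integrability condition coming from the horizontal $\sltwo$ embedded in $L_{-1}(\sltwo)$), and this yields the claimed branching.

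For the $\sltwo$-action by derivations, I would argue that $\A{1}$ carries an $\sltwo$-derivation action by its very construction: it is the lattice/simple-current extension $L_1(\sltwo) \oplus \LL{1}{1}$, on which the horizontal $\sltwo \subset \widehat{\sltwo}$ of $L_1(\sltwo)$ acts via zero modes as derivations of the full extension. Tensoring with $\Pi(0)^{\frac{1}{2}}$ (on which $\sltwo$ acts trivially) gives an $\sltwo$-derivation action on $\A{1} \otimes \Pi(0)^{\frac{1}{2}}$. Since taking $(\cdot)^{\rm{int}}$ produces a subalgebra that is by definition $\sltwo$-stable, this action restricts to $\V{1}$.

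The abelian intertwining algebra structure is inherited: $\A{1}$ is an abelian intertwining algebra (a simple-current extension of $L_1(\sltwo)$ by $\LL{1}{1}$ along the weight lattice of $\sltwo$) and $\Pi(0)^{\frac{1}{2}}$ is an abelian intertwining algebra along the rank-one isotropic lattice $\Z \frac{c}{2}$, so their tensor product carries a natural abelian intertwining algebra structure. Any subalgebra closed under the vertex operations, in particular $\V{1} = \bigl(\A{1} \otimes \Pi(0)^{\frac{1}{2}}\bigr)^{\rm{int}}$, inherits this structure; closure of the integrable part under the vertex operations is the standard fact that integrability is preserved under the product of vertex operators when the two operands lie in $\rm{KL}$.

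The only potentially delicate point is verifying that the $p=1$ specialization of Lemma \ref{ker} and of the short exact sequence in Lemma \ref{quotient} really does go through verbatim, since those lemmas were stated in the setting where the doublet carries a nontrivial Virasoro module structure with $c = c_{1,p} < 1$, whereas here $c_{1,1} = 1$ and $h_{1,n+1} = n^{2}/4$; however, their proofs only use the structure of $\Pi(0)^{\frac{1}{2}}$ together with the highest-weight theory of $\widehat{\sltwo}$ via the Sugawara construction, and both go through without modification. Once this is checked, the three claims of the proposition assemble immediately from the three paragraphs above.
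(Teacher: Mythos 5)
Your proposal is correct and follows essentially the same route as the paper: the paper defines $\V{1}=\bigl(\A{1}\otimes\Pi(0)^{\frac{1}{2}}\bigr)^{\rm{int}}$, uses the classical decomposition $\A{1}=\bigoplus_{n}\rho_n\otimes L^{\rm{Vir}}(1,n^2/4)$ coming from the simple current extension $L_1(\sltwo)\oplus\LL{1}{1}$, invokes Lemmas \ref{ker} and \ref{quotient} to identify $\bigl(L^{\rm{Vir}}(1,n^2/4)\otimes\Pi(0)^{\frac{1}{2}}\bigr)^{\rm{int}}\cong\LL{n}{1}$, and then says ``apply the same arguments as for $\V{p}$.'' Your write-up merely makes explicit the points the paper leaves implicit (termwise passage to the integrable part, inheritance of the $\sltwo$-derivation action and of the abelian intertwining algebra structure, and the harmless specialization $c_{1,1}=1$), so no further comparison is needed.
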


The algebra $\V{1}$ has a natural ${\Z}_2$--gradation:
$$ \V{1}= \V{1}_0 \oplus \V{1}_1 $$
such that
$  \V{1} _0$ is a simple  \voa:
$$  \mathcal V^{(1)} _0 =  \left(  \mathcal V^{(1)} \right) ^{\Z_2}  = \bigoplus_{n=0} ^{\infty} \rho_{2n} \otimes \LL{2n}{1},  $$
and $  \V{1}_1$ is an irreducible $  \V{1}_0$--module
$$  \V{1}_1 =   \bigoplus_{n=0} ^{\infty} \rho_{2n+1} \otimes \LL{2n+1}{1},  $$

The abelian intertwining algebra $\V{1}$ is also a  building block for a realization of $L_1(\mathfrak{psl}(2 \vert 2) )$.

\begin{proposition} \cite[Remark 9.11]{CG} As \voas
$$ L_1(\mathfrak{psl}(2 \vert 2)) \cong \V{1}_0  \otimes L_1 (\sltwo) \bigoplus \V{1}_1 \otimes  L_1 (\omega_1) .  $$
\end{proposition}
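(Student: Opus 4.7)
The plan is to recognize $L_1(\mathfrak{psl}(2\vert 2))$ as a simple current extension of $\V{1}_0 \otimes L_1(\sltwo)$ and invoke the uniqueness of such extensions that underlies much of this paper. First I would verify that $\V{1}_1 \otimes L_1(\omega_1)$ is a $\Z_2$-simple current for $\V{1}_0 \otimes L_1(\sltwo)$: the factor $L_1(\omega_1)$ is the unique nontrivial simple current of $L_1(\sltwo)$, and $\V{1}_1$ is a simple current as the nontrivial graded piece of the $\Z_2$-extension $\V{1} = \V{1}_0 \oplus \V{1}_1$ isolated in the preceding discussion.

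Next, the simple current extension / HKL framework used earlier in the paper, together with the rigid vertex tensor category structures on $\rm{KL}_{-1}(\sltwo)$ and on $\rm{Rep}(L_1(\sltwo))$, implies that there is at most one simple \voa{} structure on
\[
\V{1}_0 \otimes L_1(\sltwo) \ \oplus \ \V{1}_1 \otimes L_1(\omega_1)
\]
restricting to the obvious $\V{1}_0 \otimes L_1(\sltwo)$-module structure. Hence it suffices to exhibit such an extension and identify it with $L_1(\mathfrak{psl}(2\vert 2))$.

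To realize the extension, I would start from the conformal embedding of the even subalgebra $\sltwo \oplus \sltwo \subset \mathfrak{psl}(2\vert 2)$, which at level $1$ on $\mathfrak{psl}(2\vert 2)$ forces complementary levels $1$ and $-1$ on the two affine $\sltwo$ factors due to the supertrace form. This yields $L_1(\sltwo) \otimes L_{-1}(\sltwo) \hookrightarrow L_1(\mathfrak{psl}(2\vert 2))$. Identifying $L_{-1}(\sltwo) = \LL{0}{1}$ as the vacuum summand of $\V{1}_0$ and decomposing the odd part of $\mathfrak{psl}(2\vert 2)$ as the bifundamental $\rho_1 \otimes \rho_1$, successive operator products generate $\LL{n}{1}$ on the $L_{-1}(\sltwo)$-side paired with $L_1(\sltwo)$ for $n$ even and $L_1(\omega_1)$ for $n$ odd on the $L_1(\sltwo)$-side. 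The multiplicity spaces assemble into the $\rho_n$ of the outer $SL(2)$-automorphism of $\mathfrak{psl}(2\vert 2)$, producing precisely the claimed decomposition in terms of $\V{1}_0$ and $\V{1}_1$.

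The main obstacle is verifying the full decomposition of $L_1(\mathfrak{psl}(2\vert 2))$ as an $L_1(\sltwo) \otimes L_{-1}(\sltwo)$-module with its outer $SL(2)$-multiplicity structure. Character matching, or alternatively the free-field realization of $L_1(\mathfrak{psl}(2\vert 2))$ available from \cite{CG}, should accomplish this, but identifying the outer $SL(2)$-symmetry of $\mathfrak{psl}(2\vert 2)$ with the derivation $\sltwo$-action on $\V{1}$ established earlier in the section requires careful bookkeeping.
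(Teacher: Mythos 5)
There is no internal argument to compare against here: the paper does not prove this proposition but imports it verbatim from \cite[Remark 9.11]{CG}, where it is obtained from an explicit free-field construction of $L_1(\mathfrak{psl}(2\vert 2))$. Your proposal is therefore attempting more than the paper does, and its overall framework (conformal embedding of the even part at levels $+1$ and $-1$, simple current extension, uniqueness of extensions) is consistent with the machinery of Section 6. However, the decisive step --- the branching rule
$ L_1(\mathfrak{psl}(2\vert 2)) \cong \bigoplus_{n\geq 0} \rho_n \otimes \LL{n}{1} \otimes L_1(\overline{n}\omega_1)$
over $L_{-1}(\sltwo)\otimes L_1(\sltwo)$, with the multiplicity spaces $\rho_n$ realized by the outer $SL(2)$ of $\mathfrak{psl}(2\vert 2)$ --- is precisely the content of the cited remark, and your proposal does not establish it. The operator-product argument you sketch only shows which isotypic components \emph{can} be generated, not that the multiplicity of $\LL{n}{1}\otimes L_1(\overline{n}\omega_1)$ is exactly $n+1$ and assembles into $\rho_n$; and "character matching" is not routine here, since one must sum characters of the $L_{-1}(n\omega_1)$ with unboundedly growing multiplicities and compare with the character of $L_1(\mathfrak{psl}(2\vert 2))$ in a common domain of convergence. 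As written, the proof reduces the proposition to the statement being cited.

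A secondary remark: once the branching above is in hand, the simple-current/uniqueness preamble is redundant --- knowing that $\V{1}_0\otimes L_1(\sltwo)$ sits inside $L_1(\mathfrak{psl}(2\vert 2))$ as a sub-\voa{} with complement $\V{1}_1\otimes L_1(\omega_1)$ \emph{is} the proposition. The uniqueness of simple current extensions would only be needed if you constructed the extension abstractly and then wanted to match it with $L_1(\mathfrak{psl}(2\vert 2))$, and even then you would first need to identify the even subalgebra of $L_1(\mathfrak{psl}(2\vert 2))$ under the outer $\Z_2$ with $\V{1}_0\otimes L_1(\sltwo)$ \emph{as a \voa}, which requires a multiplicity-space version of Theorem \ref{thm:uniqueness} rather than the theorem as stated. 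The honest route, and the one the paper implicitly takes, is to rely on the explicit realization in \cite{CG}.
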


The algebra $\R{1}$ is defined to be
$$ \R{1} = \left( \V{1} \otimes F_{- \frac{1}{2}} \right)^{U(1)} $$
and it can be identified with $M \otimes M$, see e.g. Proposition 5.1 of \cite{C}.
\begin{proposition}  As \voas
$$ \R{1}  \cong M \otimes M.$$
\end{proposition}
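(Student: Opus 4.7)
The plan is to realize $L_{-1}(\sltwo)\otimes M_\varphi(1)$ as a Howe-dual pair of currents inside $M\otimes M$, match the resulting module decomposition with the decomposition of $\R{1}$ read off from Section~\ref{sec:Rp} at $p=1$, and then upgrade the module isomorphism to a \voa-isomorphism via uniqueness of simple-current extensions (as developed in \cite{HKL, CKM}).

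First I would let $a_i,a_i^*$ ($i=1,2$) be the generating fields of the two copies of the Weyl vertex algebra $M$, with OPE $a_i(z)a_j^*(w)\sim\delta_{ij}/(z-w)$, and introduce
\[
e:=\ :\!a_1a_2\!:,\qquad f:=-\!:\!a_1^*a_2^*\!:,\qquad h:=-\!:\!a_1^*a_1\!:-\!:\!a_2^*a_2\!:,\qquad \varphi:=\ :\!a_1^*a_1\!:-\!:\!a_2^*a_2\!:.
\]
A routine Wick-contraction check shows that $e,h,f$ generate a copy of $L_{-1}(\sltwo)$ (the level is read off the $1/(z-w)^2$ term in $e(z)f(w)$, which is $-1$), that $\varphi$ generates a rank-one Heisenberg current of norm $-2$ commuting with $L_{-1}(\sltwo)$, and that these normalizations match the ones built into $\R{1}$ at $p=1$.

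Second I would decompose $M\otimes M$ as an $L_{-1}(\sltwo)\otimes M_\varphi(1)$-module. Each factor $M$ decomposes as $\bigoplus_{n\in\Z}M(n)$ under its internal Heisenberg current $:\!a^*a\!:$, so regrouping $M\otimes M$ by the simultaneous eigenvalues of $(h,\varphi)$ yields a $U(1)\times U(1)$-bigrading whose character factors via the Jacobi triple product into $\sum_{\ell\in\Z}\sum_{s\ge0}\operatorname{ch}\LL{|\ell|+2s}{1}\cdot\operatorname{ch}M_\varphi(1,-\ell)$, matching the decomposition of $\R{1}$ from Section~\ref{sec:Rp}. Semisimplicity of this decomposition follows because each $\LL{n}{1}$ is a simple current for $L_{-1}(\sltwo)$ (cf.\ Subsection on tensor categories) and each Fock module $M_\varphi(1,-\ell)$ is simple.

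Third I would promote the resulting module isomorphism to a \voa-isomorphism. Both $\R{1}$ and $M\otimes M$ are abelian simple-current extensions of $L_{-1}(\sltwo)\otimes M_\varphi(1)$ by the same $\Z$-graded family of simple currents: on the $\R{1}$-side these are generated by the four weight-$\tfrac12$ fields $e_{\alpha_i,1},f_{\alpha_i,1}$ of \eqref{for-e12-p} specialized to $p=1$, while on the $M\otimes M$-side they correspond to the four free fields $a_1,a_1^*,a_2,a_2^*$ of matching $(h,\varphi)$-charges. Uniqueness of simple-current \voa-extensions from \cite{HKL, CKM} then yields the desired isomorphism. The main obstacle is the bigraded character computation of Step~2, which is precisely where the Jacobi triple product identity is used to align both sides; an alternative, more hands-on realization is given in Proposition~5.1 of \cite{C}.
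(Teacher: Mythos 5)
The paper gives no argument for this proposition beyond a pointer to Proposition 5.1 of \cite{C}, so your proposal supplies real work that the paper omits; its overall strategy (realize $\lksl\otimes M_\varphi(1)$ with $k=-1$ inside $M\otimes M$ via the standard $\mathfrak{gl}_2$ free-field currents, match the bigraded decomposition with that of $\R{1}$ from Section \ref{sec:Rp}, then upgrade to a \voa{} isomorphism by uniqueness) is exactly the intended one. Your normalizations check out: $e(z)f(w)\sim -(z-w)^{-2}+h(w)(z-w)^{-1}$ gives level $-1$, and $\langle\varphi,\varphi\rangle=-2$ matches $-2/p$ at $p=1$.

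Two points need tightening. First, semisimplicity of $M\otimes M$ over $\lksl\otimes M_\varphi(1)$ does not follow from the $\LL{n}{1}$ being simple currents; the correct justification is that $M\otimes M$ is $\tfrac{1}{2}\Z_{\geq 0}$-graded with finite-dimensional graded pieces and semisimple action of $h(0)$ and $\varphi(0)$, hence decomposes into ordinary modules, and $\mathrm{KL}_{-1}(\sltwo)$ is semisimple with simple objects the $\LL{n}{1}$ (the input from \cite{K} used in Corollary \ref{cor:tc}); the bigraded character computation then pins down the summands. Second, and more substantively, appealing to uniqueness of simple-current extensions from \cite{HKL, CKM} alone does not close Step 3: before comparing the two $\Z$-graded simple-current extensions of the charge-zero subalgebra tensored with $M_\varphi(1)$, you must know that the two Heisenberg cosets, namely $\mathrm{Com}(M_\varphi(1),\R{1})\cong\V{1}_0=\bigoplus_{s\geq 0}\LL{2s}{1}$ and $\mathrm{Com}(M_\varphi(1),M\otimes M)$, are isomorphic as \voas{} and not merely as $\lksl$-modules; since these are infinite direct sums, this is not automatic and is precisely the content of Theorem \ref{thm:uniqueness}, which rests on the identification $\mathrm{KL}_k^{\rm even}\cong\mathrm{Rep}(SO(3))$. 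The efficient way to finish is therefore to verify that $M\otimes M$ is a simple \voa{} with the stated $\lksl\otimes M_\varphi(1)$-module decomposition and then invoke Corollary \ref{cor:uniquessRp} (valid for all $p\in\Z_{\geq 1}$) verbatim, rather than re-deriving the extension argument by hand.
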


\section{Tensor category of $\lksl$}\label{sec:6}

\subsection{Tensor category of orbifold vertex operator algebra}
We recall the following results on the tensor category theory of orbifold vertex operator algebras from \cite{M}.

Let $A$ be an abelian group and $(F, \Omega)$ be a normalized abelian $3$-cocycle on $A$ with values in $\CC^{\times}$, let $V$ be a simple abelian intertwining algebra (\cite{DL1}, \cite{DL2}), a kind of generalized vertex operator algebra graded by $A$ with the usual associativity and commutativity properties of the vertex operator algebra modified by the abelian 3-cocycle $(F, \Omega)$. Let $G$ be a compact Lie group of continuous automorphisms of $V$ containing $\widehat{A}$, let ${\rm Rep}_{A, F, \Omega}(G)$ be the modified tensor category of ${\rm Rep}(G)$ by the $3$-cocycle $(F, \Omega)$.
\begin{theorem}[\cite{M}]\label{mcrae}${}$

\begin{itemize}
\item[(1)]As a $G \otimes V^G$-module, $V$ is semisimple with the decomposition:
\[
V = \bigoplus_{\chi \in \widehat{G}}M_{\chi}\otimes V_{\chi},
\]
where the sum runs over all finite-dimensional irreducible characters of $G$, $M_{\chi}$ is the finite dimensional irreducible $G$-module corresponding to $\chi$, and the $V_{\chi}$ are nonzero, distinct, irreducible $V^G$-modules.
\item[(2)]Let $\mathcal{C}_V$ be the category of $V^G$-modules generated by the $V_{\chi}$. If $V^G$ has a braided tensor category of modules that contains all $V_{\chi}$, then there is a braided tensor equivalence $\mathcal{C}_V \cong {\rm Rep}_{A, F, \Omega}(G)$.
\end{itemize}
\end{theorem}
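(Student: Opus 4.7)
The plan is to proceed in two stages. For part (1), we first invoke the Peter--Weyl theorem for the continuous action of the compact Lie group $G$ on $V$, which yields an algebraic decomposition $V = \bigoplus_{\chi \in \widehat{G}} M_\chi \otimes V_\chi$ as a $G$-module with $V_\chi = \text{Hom}_G(M_\chi, V)$ carrying a commuting $V^G$-action. The nonvanishing of every $V_\chi$ uses the hypothesis $\widehat{A} \subset G$: the $A$-grading of $V$ is visible to $G$, and the abelian intertwining algebra structure produces enough nonzero intertwining operators to populate each isotypic component. Irreducibility and distinctness of the $V_\chi$ as $V^G$-modules then follow from a Jacobson density / double commutant argument adapted to the abelian intertwining setting, analogous to the classical Dong--Li--Mason and Dong--Mason theory of vertex algebra orbifolds; the essential input is the simplicity of $V$, which forces the image of $V^G$ in $\text{End}(V_\chi)$ to be sufficiently large that any two $V_\chi, V_\psi$ with a nonzero $V^G$-intertwiner must be equivariantly paired under the $G$-decomposition, forcing $\chi = \psi$.

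For part (2), the plan is to define a functor $\Phi \colon \mathcal{C}_V \to \text{Rep}_{A, F, \Omega}(G)$ on objects by $V_\chi \mapsto M_\chi$ and extend to morphisms using the equivalence between $V^G$-intertwiners and $G$-intertwiners provided by part (1). The nontrivial content is to promote $\Phi$ to a braided tensor equivalence. The abelian intertwining algebra $V$ provides a canonical family of intertwining operators of type $\binom{V_{\chi\psi}}{V_\chi\ V_\psi}$ for each pair $\chi, \psi \in \widehat{G}$, obtained by restricting the vertex operator map on $V$ and projecting onto the appropriate isotypic component. By the very definition of an abelian intertwining algebra, the associativity and commutativity isomorphisms among these operators are controlled precisely by the abelian $3$-cocycle $(F, \Omega)$. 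Transporting these structures through $\Phi$ recovers exactly the modified associator and braiding defining $\text{Rep}_{A, F, \Omega}(G)$, so the compatibility reduces to matching two $3$-cocycles which by construction coincide.

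The main obstacle is the interplay between the algebraic isotypic decomposition of $V$ and the categorical fusion product $\boxtimes$ on the braided tensor category of $V^G$-modules: one must check that $V_\chi \boxtimes V_\psi$ is computed by the universal property applied to the intertwining operators coming from the embedding $V_\chi \otimes V_\psi \hookrightarrow V$, and hence decomposes according to the Clebsch--Gordan rules for $M_\chi \otimes M_\psi$ in $\text{Rep}(G)$. This is exactly where the hypothesis that $V^G$ admits a braided tensor category containing all $V_\chi$ is used in full force, together with the characterization of $\boxtimes$ as initial among $V^G$-bilinear data dominated by intertwining operators. Once this identification is made, the abelian intertwining structure of $V$ converts the representation-theoretic decomposition into a genuine braided tensor equivalence, completing the proof.
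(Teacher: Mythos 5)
The paper does not actually prove this statement: it is quoted verbatim from \cite{M}, with part (1) going back to the Schur--Weyl duality of Dong--Li--Mason for compact automorphism groups \cite{DLM}. So what you are attempting is a reconstruction of McRae's theorem, and your outline does capture the correct architecture (isotypic decomposition, then a functor $V_\chi \mapsto M_\chi$ matched against the fusion rules and the abelian $3$-cocycle). But as a proof it has real gaps. In part (1), the nonvanishing, irreducibility and mutual non-isomorphism of the $V_\chi$ are the entire content; you name a ``Jacobson density / double commutant argument'' but do not carry it out, and for a compact (generally infinite) group acting on an abelian intertwining algebra this is not a routine adaptation of the finite-group case. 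You also misplace the role of the hypothesis $\widehat{A}\subseteq G$: it is not what forces the $V_\chi$ to be nonzero (that follows from simplicity of $V$ together with the density theorem for the algebra of modes); rather it guarantees that $V^G$ lies in the identity component of the $A$-grading, so that $V^G$ is an honest \voa{}, and it is what allows the $A$-grading to be read off from restrictions of $G$-characters, which is precisely why the twist ${\rm Rep}_{A, F, \Omega}(G)$ makes sense at all.

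In part (2) the step you flag as ``the main obstacle'' is indeed the theorem, and your proposed resolution does not close it. The universal property of $\boxtimes$ applied to the intertwining operator of type $\binom{V}{V_\chi\, V_\psi}$ only produces a map from $V_\chi\boxtimes V_\psi$ onto the $V^G$-submodule of $V$ that it generates; one must separately rule out summands of $V_\chi\boxtimes V_\psi$ invisible inside $V$ and show that the induced maps on spaces of intertwining operators are isomorphisms, which in \cite{M} is done by realizing $V$ as a commutative algebra object in a suitable completion of the category and invoking the induction/restriction machinery of \cite{HKL, CKM}. Likewise, ``matching two $3$-cocycles which by construction coincide'' is an assertion rather than an argument: the associativity and braiding isomorphisms of the vertex tensor category are defined analytically, via convergence and analytic continuation of compositions of intertwining maps, and identifying them with the combinatorial $(F,\Omega)$-twisted constraints on ${\rm Rep}(G)$ is a genuine computation. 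As written, the proposal is a correct table of contents for McRae's proof rather than a proof.
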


\subsection{Rigidity of $\rm{KL}_k(\sltwo)$}

Note that the doublet vertex algebra $\A{p}$ is the kernel of the screening operator $\widetilde{Q}$ on the abelian intertwining algebra $V_{\Z \frac{1}{2}\gamma}$, and the vertex algebra $\V{p}$ is the kernel of the screening operator $S$ on $\A{p}\otimes \Pi^{\frac{1}{2}}$. They inherit the structure of the abelian intertwining algebra from $V_{\Z \frac{1}{2}\gamma}$, i.e. we have:
\begin{lemma}
The vertex algebra $\V{p}$ has an abelian intertwining algebra structure with an $SL(2, \mathbb{C})$-action. Moreover,
as an $SL(2, \mathbb{C}) \otimes \lksl$-module,
$$\V{p} \cong  \bigoplus_{n=0} ^{\infty} \rho_n \otimes \LL{n}{p}. $$
\end{lemma}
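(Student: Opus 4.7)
The statement collects three facts that have already been assembled earlier in this section, so the plan is simply to piece them together cleanly. First I would observe that $V_{N^{(p)}}$ is by construction an abelian intertwining algebra, with braiding data coming from the $\mathbb{Q}$-valued form on $N^{(p)}$, and that $M \otimes F_{\frac{p}{2}}$ is a sub-abelian-intertwining-algebra. The screening $\widetilde{Q}$ from \eqref{scr-gen} is the zero mode of a primary field and acts as a derivation on this subalgebra, so its kernel $\V{p} = \ker_{M\otimes F_{\frac{p}{2}}} \widetilde{Q}$ inherits the abelian intertwining algebra structure, as was already remarked after \eqref{defVp}.

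Next, for the $SL(2,\mathbb{C})$-action I would invoke Corollary \ref{sl2der}, which supplies an action of $\sltwo$ on $\V{p}$ by derivations of the abelian intertwining algebra. By Theorem \ref{thm:Vp-dec}, this action is locally finite: $\V{p}$ splits as $\bigoplus_{n \ge 0} \rho_n \otimes \LL{n}{p}$, where $\rho_n$ is the $(n+1)$-dimensional irreducible, and the $\sltwo$-action commutes with $\lksl$ and hence acts purely on the $\rho_n$-factor of each summand. In particular $e$ and $f$ act locally nilpotently, so the exponentials $\exp(te), \exp(tf), \exp(th)$ are well-defined linear operators on $\V{p}$. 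Being exponentials of derivations of an abelian intertwining algebra, they are automorphisms, and together they integrate the $\sltwo$-action to an action of $SL(2,\mathbb{C})$ by automorphisms.

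Finally, the claimed $SL(2,\mathbb{C}) \otimes \lksl$-decomposition is just Theorem \ref{thm:Vp-dec} reread under this integrated action: each $\sltwo$-summand $\rho_n$ automatically becomes the $SL(2,\mathbb{C})$-representation of the same name, and the $\lksl$-factors $\LL{n}{p}$ are untouched. The only step beyond citation is the integration of the $\sltwo$-action, and the main point to verify is that the exponentiated derivations preserve the full intertwining-operator structure, not just the underlying vector space. This is automatic once the generators act as derivations of the abelian intertwining algebra; concretely, in the construction recalled before Corollary \ref{sl2der}, $e = Q$ is a screening derivation and $f = -\Psi^{-1} Q \Psi$ is conjugate to a screening derivation by the involutive automorphism $\Psi$, so both visibly preserve the intertwining operators, and their exponentials therefore define honest abelian intertwining algebra automorphisms.
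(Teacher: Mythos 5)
Your argument is correct and follows essentially the same route as the paper: the Lemma is assembled from the screening-operator realization of $\V{p}$ (which gives the abelian intertwining algebra structure inherited from the ambient lattice algebra), the derivation action of Corollary \ref{sl2der}, and the decomposition of Theorem \ref{thm:Vp-dec}. The only cosmetic difference is that you integrate the locally finite $\sltwo$-derivation action directly to obtain the $SL(2,\mathbb{C})$-action, whereas the paper restricts the $SL(2,\mathbb{C})$-automorphism group already known on $\A{p}$ from \cite{ALM} to the kernel of $S$ inside $\A{p}\otimes \Pi(0)^{\frac{1}{2}}$; both rest on the same ingredients.
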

\begin{corollary} \label{cor:tc} Let $k=-2+\frac{1}{p}$ for $p \in \Z_{\geq 1}$. Then
$\rm{KL}_k \cong {\rm Rep}_{A, F, \Omega}(SU(2))$ as braided tensor categories for some abelian $3$-cocycle $(F, \Omega)$ and in particular $\rm{KL}_k$ is rigid.
\end{corollary}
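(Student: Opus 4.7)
The plan is to apply McRae's Theorem \ref{mcrae} with $V = \V{p}$ and $G = SU(2)$. The preceding lemma supplies most of the required data: $\V{p}$ is a simple abelian intertwining algebra (Proposition \ref{simple}) graded by an abelian group $A$ with $\widehat{A}\hookrightarrow SU(2)$, and $SU(2)$ acts on $\V{p}$ by continuous automorphisms. Taking $SU(2)$-invariants in the decomposition $\V{p}=\bigoplus_{n\ge 0}\rho_n\otimes \LL{n}{p}$ isolates the $\rho_0$ summand, so $(\V{p})^{SU(2)}= \LL{0}{p} = \lksl$, and the isotypic components indexed by the irreducible characters of $SU(2)$ are precisely the simple modules $V_\chi = \LL{n}{p}$.

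The remaining hypothesis to verify is that $\lksl$ admits a braided tensor category of modules containing all the $\LL{n}{p}$. Here I would invoke the companion paper \cite{CY}, whose main theorem asserts that semi-simplicity of $\rm{KL}_k(\sltwo)$ is enough to equip it with a braided vertex tensor category structure. Semi-simplicity itself follows from the decomposition of $\V{p}$ together with standard commutative-algebra-object and orbifold arguments (cf.\ \cite{CKM, CKLR, M}): the simplicity of $\V{p}$, combined with the fact that every simple object $\LL{n}{p}$ of $\rm{KL}_k$ appears as a direct summand in the branching of $\V{p}$ to $\lksl$, forces any object of $\rm{KL}_k$ to split as a direct sum of the $\LL{n}{p}$.

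With all hypotheses in place, Theorem \ref{mcrae} yields a braided tensor equivalence $\rm{KL}_k(\sltwo)\cong \rm{Rep}_{A,F,\Omega}(SU(2))$, since the category $\mathcal{C}_{\V{p}}$ generated by the $V_\chi$ exhausts $\rm{KL}_k$ by the semi-simplicity just discussed. Rigidity of $\rm{KL}_k$ is then automatic: $\rm{Rep}(SU(2))$ is rigid, and twisting its associativity constraint by an abelian $3$-cocycle preserves the existence of duals. The main obstacle in this strategy is the semi-simplicity of $\rm{KL}_k$, which relies on \cite{CY} together with the branching in Theorem \ref{thm:Vp-dec}; everything else is a mechanical application of the structural results proved earlier in the paper.
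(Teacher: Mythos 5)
Your overall strategy coincides with the paper's: both apply Theorem \ref{mcrae} to $V=\V{p}$ with $G=SU(2)$, using the decomposition $\V{p}=\bigoplus_n \rho_n\otimes\LL{n}{p}$ to identify $(\V{p})^{SU(2)}=\lksl$ and $V_\chi=\LL{n}{p}$, and using \cite{CY} for the braided tensor category structure. The divergence — and the genuine gap — is in how you justify semi-simplicity of $\rm{KL}_k$. The paper does not derive it from $\V{p}$ at all: it cites Kumar \cite{K} (an Ext-vanishing theorem for Kac--Moody algebras) as an external input, and only then invokes \cite{CY}, whose theorem is precisely of the form ``semi-simplicity of $\rm{KL}_k$ implies existence of vertex tensor category structure.''

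Your proposed derivation of semi-simplicity is both circular and logically insufficient. Circular, because the ``commutative-algebra-object and orbifold arguments'' of \cite{HKL, CKM, M} that you invoke require $\rm{KL}_k$ to already carry a vertex tensor category structure in order to view $\V{p}$ as an algebra object over $\lksl$; but in the paper's chain of reasoning that structure is itself a \emph{consequence} of semi-simplicity via \cite{CY}. Insufficient, because knowing that the single module $\V{p}\vert_{\lksl}$ decomposes as $\bigoplus_n \rho_n\otimes\LL{n}{p}$, and that $\V{p}$ is simple, says nothing about an arbitrary ordinary $\lksl$-module: a priori there could be non-split self-extensions or extensions between $\LL{n}{p}$ and $\LL{m}{p}$ inside $\rm{KL}_k$, and ruling these out is exactly the content of the Ext-vanishing result in \cite{K}. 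To repair the proof, replace your second paragraph by the citation of \cite{K} for semi-simplicity of $\rm{KL}_k$ with simple objects $\LL{n}{p}$; the rest of your argument then goes through as written.
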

\begin{proof}
From \cite{K}, the category $\rm{KL}_k$ of ordinary $L_k(\sltwo)$-modules is semisimple with simple objects $\mathcal{L}_n$ for $n \in \Z_{\geq 0}$. Furthermore, it was shown in \cite{CY} that $\rm{KL}_k$ has a braided tensor category structure. Thus by Theorem \ref{mcrae} that $\rm{KL}_k \cong {\rm Rep}_{A, F, \Omega}(SU(2))$ as braided tensor categories for some abelian $3$-cocycle $(F, \Omega)$ and in particular $\rm{KL}_k$ is rigid.
\end{proof}
Let $\rm{KL}_k^{\rm{even}}$ be the subcategory of $\rm{KL}_k$ whose simple objects are the $\LL{2n}{p}$ with $n$ in $\Z_{\geq 0}$. Then the $SL(2, \C)$-action on $\V{p}$ induces a $PSL(2, \C)$-action on the $\Z_2$-orbifold $(\V{p})^{\Z_2}$ and the orbifold decomposes accordingly
\[
(\V{p})^{\Z_2} \cong \bigoplus_{n=0}^\infty \rho_{2n} \otimes \LL{2n}{p}.
\]
The orbifold $(\V{p})^{\Z_2}$ is a \voa{} and hence there is no need for any abelian cocycle, i.e.
\begin{corollary}\label{cor:tceven}
$\rm{KL}_k^{\rm{even}} \cong \rm{Rep}(SO(3))$  as symmetric tensor categories.
\end{corollary}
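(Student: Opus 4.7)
The plan is to apply Theorem \ref{mcrae} directly to the $\Z_2$-orbifold $(\V{p})^{\Z_2}$, which is a genuine vertex operator algebra (rather than merely an abelian intertwining algebra) and which carries a $PSL(2,\C)$-action with compact real form $G=PSU(2)\cong SO(3)$. The decomposition
\[
(\V{p})^{\Z_2}\cong\bigoplus_{n=0}^{\infty}\rho_{2n}\otimes\LL{2n}{p}
\]
exhibits this \voa{} precisely as the sum over characters of $SO(3)$, with the invariants $((\V{p})^{\Z_2})^{SO(3)}=\LL{0}{p}=\lksl$. First I would verify that the hypotheses of Theorem \ref{mcrae} hold with $A$ trivial and the abelian 3-cocycle $(F,\Omega)$ trivial, since $(\V{p})^{\Z_2}$ is a genuine integer-graded \voa. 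In particular ${\rm Rep}_{A,F,\Omega}(G)$ collapses to the ordinary symmetric tensor category ${\rm Rep}(SO(3))$.

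Next I would check that all the $\LL{2n}{p}$ lie inside a braided tensor category of $\lksl$-modules: by \cite{CY} together with the semisimplicity of $\rm{KL}_k$ from \cite{K}, the category $\rm{KL}_k$ admits a braided tensor category structure (this is exactly the input used in Corollary \ref{cor:tc}), and $\rm{KL}_k^{\rm{even}}$ is by definition the full tensor subcategory generated by the $\LL{2n}{p}$. Theorem \ref{mcrae}(2) then produces a braided tensor equivalence
\[
\rm{KL}_k^{\rm{even}}\;\cong\;{\rm Rep}(SO(3)).
\]

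Finally, to upgrade this to an equivalence of \emph{symmetric} tensor categories, I would observe that ${\rm Rep}(SO(3))$ is symmetric, being the representation category of a group, and that any braided tensor equivalence transports the symmetric braiding back to $\rm{KL}_k^{\rm{even}}$. Hence $\rm{KL}_k^{\rm{even}}$ is symmetric and the equivalence respects this structure. The main obstacle, and really the only nontrivial point, is confirming that the hypothesis of Theorem \ref{mcrae}(2) is satisfied, namely that the existing braided tensor category structure on $\rm{KL}_k$ accommodates all the summands $\LL{2n}{p}$ appearing in $(\V{p})^{\Z_2}$; this is immediate from Corollary \ref{cor:tc} since $\rm{KL}_k^{\rm{even}}\subset\rm{KL}_k$ as a full tensor subcategory.
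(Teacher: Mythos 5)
Your proposal is correct and follows essentially the same route as the paper: the authors also pass to the $\Z_2$-orbifold $(\V{p})^{\Z_2}\cong\bigoplus_{n=0}^{\infty}\rho_{2n}\otimes\LL{2n}{p}$ with its induced $PSL(2,\C)$-action, note that it is a genuine \voa{} so no abelian $3$-cocycle is needed, and invoke Theorem \ref{mcrae} together with the braided tensor structure on $\rm{KL}_k$ from Corollary \ref{cor:tc}. Your additional remarks (identifying the invariants with $\lksl$ and transporting the symmetric braiding) only make explicit what the paper leaves implicit.
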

\begin{remark}\label{Virrem}
Consider the case of $\A{1}$, then $(\A{1})^{SU(2)}$ is $L^{\rm{Vir}}(1,0)$ and its representation category is braided equivalent to ${\rm Rep}_{A, F, \Omega}(SU(2))$ for some abelian $3$-cocycle $(F, \Omega)$ \cite[Example 4.11]{M}. The $\Z_2$ orbifold of $\A{1}$ is just $L_1(\sltwo)$ and it decomposes as
\[
L_1(\sltwo) \cong   \bigoplus_{n=0}^\infty \rho_{2n} \otimes L ^{\rm{Vir}} \left(1, n^2\right)
\]
as $SO(3) \otimes L^{\rm{Vir}}(1,0)$-module. It thus follows that the category of Virasoro modules whose simple objects are the $L ^{\rm{Vir}} \left(1, n^2\right)$ is equivalent to $\rm{Rep}(SO(3))$  as symmetric tensor categories.
\end{remark}

\subsection{Simple currents}\label{sec:sc}

Recall that we have the two $U(1)$-orbifolds ${\V{p}}^{U(1)}$ and $\widetilde{ \R{p}} = \left(\V{p} \otimes F_{-\frac{p}{2}} \right)^{U(1)}$ with corresponding decompositions
\begin{equation}
 \begin{split}
 \V{p} &\cong  \bigoplus_{\ell\in \Z} \V{p}_\ell \otimes \C_\ell, \\
\V{p} \otimes F_{-\frac{p}{2}} &\cong \bigoplus_{\ell\in \Z} \widetilde{\R{p}}_\ell \otimes \C_\ell
 \end{split}
 \end{equation}
 with
  \begin{equation}
 \begin{split}
\V{p}_\ell  &\cong \bigoplus_{s=0}^{\infty} \LL{\vert \ell  \vert + 2 s }{p} \\
\widetilde{ \R{p}}_\ell &=  \bigoplus_{n \in \Z}  \V{p}_{\ell +n} \otimes M_\varphi(1, -n) \\ \end{split}
 \end{equation}
We can apply Theorem \ref{mcrae}, since $\V{p}$ and  $\V{p} \otimes F_{-\frac{p}{2}}$ are simple \voas{} by Proposition \ref{simple} and since $\rm{KL}_k$ as well as Fock modules of the Heisenberg \voa{} have tensor category structure (\cite{CY} and \cite[Theorem 2.3]{CKLR})
and hence also their extensions have this property \cite{CKM}. Especially we have the following fusion rules
\begin{corollary}
The modules $\V{p}_\ell$ are simple currents for the \voa{} ${\V{p}}^{U(1)}$ and the $\widetilde{ \R{p}}_\ell$ are simple currents for $\widetilde{ \R{p}}$. Especially the fusion rules
\[
\V{p}_\ell \boxtimes \V{p}_{\ell'} \cong \V{p}_{\ell + \ell'}\qquad \text{and} \qquad
\widetilde{ \R{p}}_\ell \boxtimes \widetilde{ \R{p}}_{\ell'} \cong \widetilde{ \R{p}}_{\ell + \ell'}
\]
hold.
\end{corollary}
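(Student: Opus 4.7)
The plan is to deduce both assertions as direct applications of Theorem \ref{mcrae} to the two simple abelian intertwining algebras $\V{p}$ and $\V{p}\otimes F_{-\frac{p}{2}}$, each endowed with its natural $U(1)$-action with grading group $A=\Z$. Since the Pontryagin dual of $\Z$ is $U(1)$ itself, the containment $\widehat{A}\subseteq G$ is automatic, and the decompositions recorded immediately before the corollary are already of the form $V=\bigoplus_{\ell\in\Z} V_\ell\otimes \C_\ell$ demanded by the theorem.

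For hypothesis (1) of Theorem \ref{mcrae}, I would invoke Proposition \ref{simple} for simplicity of $\V{p}$ and observe that $\V{p}\otimes F_{-\frac{p}{2}}$ is then simple as a tensor product of simple factors. Part (1) of the theorem immediately yields that $\{\V{p}_\ell\}_{\ell\in\Z}$ is a family of pairwise non-isomorphic simple $\V{p}^{U(1)}$-modules and $\{\widetilde{\R{p}}_\ell\}_{\ell\in\Z}$ is a family of pairwise non-isomorphic simple $\widetilde{\R{p}}$-modules. For hypothesis (2) I would rely on the tensor-category chain already assembled in the paragraph preceding the corollary: $\rm{KL}_k(\sltwo)$ is a rigid braided vertex tensor category by \cite{CY} together with Corollary \ref{cor:tc}, the Heisenberg Fock modules form a braided tensor category by \cite[Theorem 2.3]{CKLR}, and the extension theory of \cite{CKM} transports these structures to modules for the vertex operator algebra extensions $\V{p}^{U(1)}$ and $\widetilde{\R{p}}$. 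The isotypic decompositions of the $\V{p}_\ell$ and $\widetilde{\R{p}}_\ell$ recorded above then show that each such module lies in the resulting braided tensor category.

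With both hypotheses in place, Theorem \ref{mcrae}(2) produces a braided tensor equivalence between the category generated by the $\V{p}_\ell$ (respectively the $\widetilde{\R{p}}_\ell$) and $\rm{Rep}_{\Z,F,\Omega}(U(1))$ for some abelian $3$-cocycle $(F,\Omega)$. In the target, the simple objects are the one-dimensional characters $\C_\ell$, with tensor product $\C_\ell\otimes \C_{\ell'}\cong \C_{\ell+\ell'}$ at the level of underlying objects, the cocycle affecting only associativity and braiding. Hence every $\C_\ell$ is invertible with inverse $\C_{-\ell}$, and transporting across the equivalence yields both the simple-current property and the claimed fusion rules. The only delicate point is confirming that the ambient tensor categories really contain every isotypic summand appearing in the two decompositions, but this is precisely what the combination of \cite{CY, CKLR, CKM} secures, so no new technical obstacle arises beyond invoking the correct extension theorem.
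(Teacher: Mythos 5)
Your proposal is correct and follows essentially the same route as the paper: both apply Theorem \ref{mcrae} to the simple abelian intertwining algebras $\V{p}$ and $\V{p}\otimes F_{-\frac{p}{2}}$ with their $U(1)$-actions, using Proposition \ref{simple} for simplicity and the combination of \cite{CY}, \cite{CKLR} and \cite{CKM} to supply the braided tensor category hypothesis, then read off invertibility of the one-dimensional $U(1)$-isotypic components. The paper states this more tersely, but the content is identical.
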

We can thus use the theory of infinite order simple current extensions \cite{CKL} and have  that $\widetilde{ \R{p}}$ is an infinite order simple current extension of ${\V{p}}^{U(1)} \otimes M_\varphi(1)$. Moreover it is generated by the fields corresponding to the generating simple currents, that is the fields in
$\V{p}_{1} \otimes M_\varphi(1, -1)$ and $\V{p}_{-1} \otimes M_\varphi(1, 1)$  together with the ones of
${\V{p}}^{U(1)} \otimes M_\varphi(1)$. But this is exactly how we introduced $\R{p}$, so that we can conclude that
\begin{corollary}\label{r=rtilde}The vertex operator algebras
\[
\widetilde{ \R{p}} ={ \R{p}}.
\]
\end{corollary}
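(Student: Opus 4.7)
The inclusion $\R{p}\subseteq \widetilde{\R{p}}$ is immediate: each of the four fields $e_{\alpha_i,p},f_{\alpha_i,p}$ in \eqref{for-e12-p} is the product of a vector in $\V{p}$ of outer $\sltwo$-weight $\pm 1$ with $e^{\mp \frac{p}{2}\varphi}\in F_{-\frac{p}{2}}$, whose $\varphi$-charge cancels the outer $h$-charge, so all these fields lie in the diagonal $U(1)$-invariant subspace $\widetilde{\R{p}}$; the same is trivially true for $\lksl\otimes M_\varphi(1)$. The plan is therefore to show the reverse inclusion by identifying the generators of $\R{p}$ with a generating set for the infinite order simple current extension $\widetilde{\R{p}}$.

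The main input is the preceding corollary, which tells us $\widetilde{\R{p}}=\bigoplus_{\ell\in\Z}J_\ell$ with $J_\ell=\V{p}_\ell\otimes M_\varphi(1,-\ell)$, together with the fusion $J_\ell\boxtimes J_{\ell'}\cong J_{\ell+\ell'}$. Iterating this fusion, any vertex subalgebra of $\widetilde{\R{p}}$ that contains $J_0$ and the two generating simple currents $J_{\pm 1}$ must contain every $J_\ell$, and hence equal $\widetilde{\R{p}}$; cf. the infinite order simple current extension theory of \cite{CKL}. It therefore suffices to prove $J_0\subseteq\R{p}$ and $J_{\pm 1}\subseteq\R{p}$.

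For $J_{\pm 1}$, observe that $\V{p}_{\pm 1}$ is, by Theorem \ref{thm:Vp-dec}, the outer weight $\pm 1$ subspace of $\bigoplus_n\rho_n\otimes\LL{n}{p}$, and by Corollary \ref{strong} this subspace is generated as an $\lksl$-module by the top-level vectors $\tau^\pm_{(p)},\bar\tau^\pm_{(p)}$ of outer weight $\pm 1$. Tensoring with the one-dimensional space $\C e^{\mp\frac{p}{2}\varphi}$ and recalling that $M_\varphi(1,\mp 1)=M_\varphi(1).e^{\mp \frac{p}{2}\varphi}$ is generated over $M_\varphi(1)$ by this vector, we conclude that $J_{\pm 1}$ is generated as an $\lksl\otimes M_\varphi(1)$-module by precisely the four vectors $e_{\alpha_i,p},f_{\alpha_i,p}$ of \eqref{for-e12-p}. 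Hence $J_{\pm 1}$ is contained in the OPE closure of the generators of $\R{p}$. The piece $J_0$ is then produced for free by the fusion $J_1\boxtimes J_{-1}\cong J_0$ (realized inside $\widetilde{\R{p}}$ as normally ordered products), which produces all of $(\V{p})^{U(1)}\otimes M_\varphi(1)$ from the four generators together with $\lksl\otimes M_\varphi(1)\subset\R{p}$.

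The main delicate point, and the only step that goes beyond invoking already-established results, is verifying that the four distinguished fields indeed generate $J_{\pm 1}$ as an $\lksl\otimes M_\varphi(1)$-module (equivalently, that $\tau^\pm_{(p)},\bar\tau^\pm_{(p)}$ exhaust the top-level outer-weight $\pm 1$ vectors of $\V{p}_{\pm 1}\cong\bigoplus_{s\ge 0}\LL{1+2s}{p}$); this is however a direct consequence of the strong generation result of Corollary \ref{strong} combined with the $\sltwo$-decomposition of Theorem \ref{thm:Vp-dec}.
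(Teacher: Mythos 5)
Your overall strategy — reduce the reverse inclusion to showing $J_0=\V{p}_0\otimes M_\varphi(1)$ and $J_{\pm 1}=\V{p}_{\pm 1}\otimes M_\varphi(1,\mp 1)$ lie in $\R{p}$, then invoke the fusion rules $J_\ell\boxtimes J_{\ell'}\cong J_{\ell+\ell'}$ of the infinite order simple current extension — is the same as the paper's, which also argues via \cite{CKL}. However, the step you yourself single out as the ``main delicate point'' rests on a claim that is false. You assert that $\V{p}_{\pm 1}$ is generated as an $\lksl$-module by the top-level vectors $\tau^{\pm}_{(p)},\overline{\tau}^{\pm}_{(p)}$, citing Corollary \ref{strong} and Theorem \ref{thm:Vp-dec}. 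But $\V{p}_{\pm 1}\cong\bigoplus_{s\geq 0}\LL{1+2s}{p}$ is an infinite, multiplicity-free direct sum of pairwise non-isomorphic simple ordinary $\lksl$-modules, and the two of the four vectors having outer weight $+1$ (resp.\ $-1$) span the top level of the $s=0$ summand only; the $\lksl$-submodule they generate is therefore exactly $\LL{1}{p}$, not all of $\V{p}_{\pm 1}$. Hence $J_{\pm 1}$ is \emph{not} generated as an $\lksl\otimes M_\varphi(1)$-module by the four fields of \eqref{for-e12-p}. Corollary \ref{strong} cannot repair this: it asserts strong generation of the vertex algebra $\V{p}$, i.e.\ closure under all products $u_{(n)}v$ of the generating fields with one another, which is a vertex-algebraic statement and does not imply generation of $\V{p}_{\pm1}$ as a module over the subalgebra $\lksl$.

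What is actually needed to get $J_{\pm 1}\subseteq\R{p}$ (and then $J_0\subseteq\R{p}$) is precisely this vertex-algebraic closure: iterated OPE products among the four fields and with the affine and Heisenberg fields, combined with simplicity of $\widetilde{\R{p}}$ (so products of nonzero vectors do not vanish) and the fusion rules of ${\rm KL}_k$, are what produce the higher summands $\LL{|\ell|+2s}{p}\otimes M_\varphi(1,-\ell)$ with $s>0$. Equivalently, once all of $J_0$ and one nonzero vector of each of $J_{\pm1}$ are known to lie in a subalgebra, simplicity of the $J_{\pm1}$ as $J_0$-modules finishes the argument; but $J_0=\bigoplus_{s}\LL{2s}{p}\otimes M_\varphi(1)$ strictly contains $\lksl\otimes M_\varphi(1)$, so establishing $J_0\subseteq\R{p}$ is itself part of what must be proved, and your order of deductions ($J_{\pm1}$ first by module generation, then $J_0$ by fusion) does not close. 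The paper's own proof is admittedly terse at exactly this point, but it phrases the generation of $\widetilde{\R{p}}$ from $J_0$ and the generating simple currents vertex-algebraically rather than as module generation over $\lksl\otimes M_\varphi(1)$; your proposal replaces the genuinely delicate vertex-algebraic step with a module-theoretic statement that is simply not true.
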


\subsection{Uniqueness of $\V{p}_0$ and $\R{p}$}

Consider the case $p=1$, so that $(\A{1})^{U(1)}= \A{1}_0 \cong M(1)$ is nothing but the simple rank one Heisenberg \voa. The latter is strongly generated by the weight one field and this means that there is a unique simple \voa{} structure up to isomorphism. Recall that there is a one-to-one correspondence between commutative and associative algebras in a vertex tensor category $\mathcal C$ of a \voa{} $V$ and \voa{} extensions of $V$ in $\mathcal C$ \cite{HKL}. Moreover the vertex tensor category of modules of the extended \voa{} that lie in $\mathcal C$ is braided equivalent to the category of local modules for the algebra object that lie in $\mathcal C$ \cite{CKM}. Especially the extended \voa{} is simple if and only if the corresponding algebra object is simple as a module for itself.

The braided equivalence $\rm{KL}_k^{\text{even}} \cong \rm{Rep}(SO(3))$ together with the unique simple \voa{} structure on $\A{1}_0$ and the fact that
the category of Virasoro modules whose simple objects are the $L^{\rm{Vir}} \left(1, n^2\right)$ is equivalent to $\rm{Rep}(SO(3))$ as symmetric tensor categories (by Remark \ref{Virrem})
imply that
\begin{theorem}\label{thm:uniqueness} ${}$
\begin{enumerate}
\item
Up to isomorphism, there is a unique commutative and associative algebra structure on the object
\[
A = \bigoplus_{s=0}^\infty \rho_{2s}
\]
in $\rm{Rep}((SO(3)))$ such that $A$ is simple as a module for itself.
\item For every $p$ in $\Z_{\geq 1}$ and up to isomorphism there is a unique simple \voa{} structure on
\[
\bigoplus_{s=0}^\infty \LL{2s}{p}.
\]
\end{enumerate}
\end{theorem}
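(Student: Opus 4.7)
The strategy is to transport the uniqueness question across the chain of braided tensor equivalences already established, using the HKL correspondence \cite{HKL} between simple \voa{} extensions and simple commutative associative algebras in the appropriate vertex tensor category. Two symmetric tensor equivalences are the key input: $\rm{KL}_k^{\rm{even}} \cong \rm{Rep}(SO(3))$ from Corollary \ref{cor:tceven}, and the Virasoro subcategory whose simples are the $L^{\rm{Vir}}(1, n^2)$ being braided equivalent to $\rm{Rep}(SO(3))$ by Remark \ref{Virrem}.

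For part (1), my plan is to anchor the argument in the case $p=1$, where $(\A{1})^{U(1)} \cong M(1)$ is the rank one Heisenberg \voa. Under the Virasoro--$\rm{Rep}(SO(3))$ equivalence, the decomposition $M(1) \cong \bigoplus_{s\ge 0} L^{\rm{Vir}}(1, s^2)$ corresponds precisely to the underlying object $A = \bigoplus_{s\ge 0} \rho_{2s}$. Since $M(1)$ is strongly generated by a single weight-one field with nondegenerate invariant bilinear form, any simple \voa{} with the same underlying Virasoro module structure must be isomorphic to $M(1)$. Applying HKL to the Virasoro \voa{} $L^{\rm{Vir}}(1,0)$, this uniqueness of simple \voa{} extensions transfers to uniqueness of the associated simple commutative associative algebra structure on the image object, and then under the symmetric tensor equivalence of Remark \ref{Virrem} to uniqueness of such a structure on $A$ in $\rm{Rep}(SO(3))$. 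Conversely, the existence is immediate from $M(1)$ itself.

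For part (2), I apply the braided equivalence of Corollary \ref{cor:tceven} in the opposite direction: it transports the unique algebra object $A \in \rm{Rep}(SO(3))$ to a unique simple commutative associative algebra structure on $\bigoplus_{s \ge 0} \LL{2s}{p}$ inside $\rm{KL}_k^{\rm{even}}$. By HKL applied to $\lksl$, this corresponds to a unique simple \voa{} extension of $\lksl$ with the prescribed $\lksl$-module structure. The existence of at least one such structure (e.g.\ the one coming from $(\V{p})^{\Z_2}$) ensures the correspondence is nonvacuous.

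The main technical obstacle lies in matching the summands correctly under the two tensor equivalences, namely identifying the Virasoro simple $L^{\rm{Vir}}(1, n^2)$ with the simple object that corresponds to $\rho_{2n} \in \rm{Rep}(SO(3))$, and likewise $\LL{2n}{p}$ with $\rho_{2n}$, in a manner compatible with the algebra structure. Once these identifications are fixed, the fact that symmetric/braided tensor equivalences automatically transport commutative algebra objects to commutative algebra objects and preserve simplicity reduces the proof to careful bookkeeping through the HKL correspondence.
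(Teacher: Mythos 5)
Your proposal is correct and follows essentially the same route as the paper: anchoring at $p=1$ via $\A{1}_0\cong M(1)$, using strong generation by the weight-one field to get uniqueness of the simple \voa{} structure, and then transporting this through the HKL correspondence and the two symmetric tensor equivalences (Remark \ref{Virrem} for part (1), Corollary \ref{cor:tceven} for part (2)). The only addition worth noting is your explicit remark about nondegeneracy of the invariant form on the weight-one space, which is a small but genuine detail the paper leaves implicit.
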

\begin{corollary}\label{cor:uniquessRp} For $p$ in $\Z_{\geq 1}$ and $k=-2+\frac{1}{p}$, let $\mathcal X$ be a simple \voa, such that
\[
\mathcal X  \cong \bigoplus_{n \in \Z}  \V{p}_{n} \otimes M_\varphi(1, -n)
\]
as $\lksl \otimes M(1)$-module. Then $\mathcal X \cong \R{p}$.
\end{corollary}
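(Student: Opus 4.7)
The plan is to use Theorem \ref{thm:uniqueness}(2) to pin down a distinguished subalgebra of $\mathcal X$ uniquely, and then to use simple current extension theory to force the extension to all of $\mathcal X$. First, the integer grading on $\mathcal X$ supplied by the hypothesis is the eigenspace decomposition for the zero mode of a weight-one element of $\lksl \otimes M(1) \subset \mathcal X$ (roughly $\tfrac{1}{2}h(-1)\mathbf 1 \otimes \mathbf 1 + \mathbf 1 \otimes \varphi(-1)\mathbf 1$, up to normalization), and is therefore automatically respected by the VOA structure on $\mathcal X$. Consequently the degree-zero subspace $\mathcal X_0 := \V{p}_0 \otimes M_\varphi(1)$ is a VOA subalgebra of $\mathcal X$, and it is simple since $\mathcal X$ is simple and $\mathbb Z$ is torsion-free.

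Next, as an $\lksl$-module, $\V{p}_0 \cong \bigoplus_{s \geq 0} \LL{2s}{p}$, so Theorem \ref{thm:uniqueness}(2) gives a unique simple VOA structure on $\V{p}_0$ up to isomorphism. Combining this with the uniqueness of the rank-one Heisenberg VOA $M_\varphi(1)$ and the fact that the $\V{p}_0$- and $M_\varphi(1)$-subalgebras of $\mathcal X_0$ commute with one another (inherited from the commutation of $\lksl$ and $M(1)$ inside $\mathcal X$), one deduces that $\mathcal X_0 \cong \V{p}_0 \otimes M_\varphi(1) \cong (\R{p})^{U(1)}$ as VOAs.

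It remains to show that $\mathcal X$ is the unique simple VOA extension of $\mathcal X_0$ with the given module spectrum. By Subsection \ref{sec:sc}, each module $\mathcal X_n := \V{p}_n \otimes M_\varphi(1,-n)$ is a simple current for $\mathcal X_0$ in a common vertex tensor category (built out of $\rm{KL}_k^{\rm{even}}$-type modules and Heisenberg Fock modules), with fusion rule $\mathcal X_n \boxtimes \mathcal X_m \cong \mathcal X_{n+m}$. The HKL correspondence \cite{HKL, CKM} then identifies $\mathcal X$ with a simple commutative associative algebra structure on $\bigoplus_{n \in \mathbb Z} \mathcal X_n$ in this category, and $\R{p}$ provides one such structure. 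Since $\mathbb Z$ is free abelian, $H^2(\mathbb Z, \mathbb C^\times) = 1$, and the classification of $\mathbb Z$-graded simple current extensions from \cite{CKL} yields uniqueness up to isomorphism; hence $\mathcal X \cong \R{p}$.

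The main obstacle is the first step: transferring the abstract uniqueness of Theorem \ref{thm:uniqueness}(2) into an honest VOA identification of $\mathcal X_0$. One must carefully verify that no nontrivial twisting between the $\V{p}_0$- and $M_\varphi(1)$-factors is possible; this relies on the commutation of $\lksl$ and $M(1)$ inside $\mathcal X$, together with the multiplicity-one structure of the Heisenberg Fock sectors. The second step is then a routine application of the tensor-categorical machinery already assembled in this paper.
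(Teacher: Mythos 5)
Your proposal is correct and follows essentially the same route as the paper: identify the degree-zero/coset subalgebra uniquely via Theorem \ref{thm:uniqueness}, then invoke uniqueness of $\Z$-graded simple current extensions (the paper cites \cite[Proposition 2.15]{CGR} together with \cite[Remark 3.11]{CKL}). The only difference is that the paper phrases the first step as $\mathrm{Com}\left(M_\varphi(1), \mathcal X\right) \cong \V{p}_0$, i.e.\ it works with the Heisenberg commutant rather than the full degree-zero subspace, which automatically disposes of the ``twisting between the two factors'' issue you flag as the main obstacle.
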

 \begin{proof}
 The Heisenberg coset of $\mathcal X$ is
 \[
\rm{Com}\left( M_\varphi(1), \mathcal X\right) \cong \V{p}_0.
 \]
 This is an isomorphism of \voas{} by Theorem \ref{thm:uniqueness}. 
  Simple current extensions to $\frac{1}{2}\Z$-graded vertex operator (super)algebras are unique up to isomorphism by combining \cite[Proposition 2.15]{CGR}  with \cite[Remark 3.11]{CKL}. Hence any two simple current extensions of $ \V{p}_0 \otimes M(1)$ of the form $\bigoplus\limits_{n \in \Z}  \V{p}_{n} \otimes M_\varphi(1, -n)$ must be isomorphic as \voas.
  \end{proof}
  There is a similar uniqueness result for $\B{p}$ and $\A{p}_0$-algebras. For this we however need to assume the existence of vertex tensor category structure on the Virasoro modules appearing in $\B{p}$. This existence result is work in progress \cite{CFJRY}.
  \begin{theorem}{\rm{\cite{CFJRY}}}\label{CFJRY}
  For any complex number $c$, the category of $C_1$-cofinite modules for $L^{\text{Vir}}(c,0)$ has a vertex tensor category structure.
  \end{theorem}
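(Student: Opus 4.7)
The plan is to apply the general Huang--Lepowsky--Zhang framework, as refined by Miyamoto and subsequently by Huang, which reduces the construction of vertex tensor category structure on a module category $\mathcal{C}$ to three technical verifications: (i) $\mathcal{C}$ is closed under contragredient duals; (ii) $\mathcal{C}$ is closed under the $P(z)$-tensor product for every $z\in\mathbb{C}^\times$; and (iii) matrix coefficients of products and iterates of intertwining operators among objects of $\mathcal{C}$ satisfy the convergence and extension property. If these three conditions hold, associativity, braiding and the rest of the vertex tensor data are supplied by HLZ.

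First I would verify (ii) via a version of Miyamoto's criterion. The key observation is that every $C_1$-cofinite module $M$ for $L^{\text{Vir}}(c,0)$ admits a presentation as a quotient of a module generalized-induced from a finite-dimensional module for the parabolic subalgebra $\text{Vir}_{\geq 0}$, and finite generation modulo $C_1$ is preserved under the $P(z)$-fusion product. Concretely, for $C_1$-cofinite $M_1,M_2$, a generating set for the $P(z)$-tensor product can be extracted from the finite-dimensional quotients $M_i / C_1(M_i)$ together with a finite collection of ``bracket'' vectors, and one checks stability of this generating set under the action of the Virasoro algebra. Condition (i) is automatic: contragredients of $C_1$-cofinite modules remain $C_1$-cofinite under our lower-bounded grading hypothesis, and the contragredient pairing is compatible with the $P(z)$-tensor product.

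For (iii), the essential analytic input is the existence of differential equations satisfied by matrix coefficients of products of intertwining operators. Using Huang's $C_1$-cofiniteness argument, I would show that for any finite list of $C_1$-cofinite modules $M_1,\ldots,M_n$, the relevant correlation functions lie in a finitely generated module over the ring of polynomial differential operators on the configuration space $\{(z_1,\ldots,z_n) : z_i\neq z_j\}$. Consequently they satisfy a system of linear ODEs in each insertion variable with regular singularities at the pairwise diagonals and at infinity. Standard Frobenius theory then yields absolute convergence on simply-connected subdomains and analytic continuation along paths, which is precisely the input required by HLZ to produce the associativity isomorphism.

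The hardest step is ensuring (iii) uniformly in the complex parameter $c$ and across the wide class of $C_1$-cofinite modules, which includes non-highest-weight objects such as self-extensions and logarithmic modules carrying nontrivial Jordan blocks of $L(0)$. For generic $c$ the universal Virasoro algebra $V^{\text{Vir}}(c,0)$ is simple and the $C_1$-cofinite analysis proceeds directly; at degenerate $c$ one must additionally check that the existence of singular vectors in $V^{\text{Vir}}(c,0)$ descends to BPZ-type constraints on intertwining operators and that these constraints are compatible with, rather than obstructive to, the finiteness and regularity of the resulting ODE system. Extending Huang's differential equation machinery in a way that accommodates both this degeneracy phenomenon and logarithmic modules is the main technical content, carried out in \cite{CFJRY}.
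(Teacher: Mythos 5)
This statement is not proved in the paper at all: it is imported verbatim from the reference \cite{CFJRY} (``Tensor categories arising from the Virasoro algebra'', listed as in preparation), and the present paper only uses it as a black box to deduce Corollaries \ref{cor:tcVir}, \ref{cor:uniquessBp} and related results. So there is no internal proof to compare your argument against. That said, your outline is essentially the strategy that the cited work follows: verify the Huang--Lepowsky--Zhang hypotheses, with closure under $P(z)$-fusion supplied by Miyamoto's theorem on $C_1$-cofiniteness of fusion products and the convergence/extension property supplied by Huang's differential-equation machinery for $C_1$-cofinite modules.

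However, as written your proposal is a plan rather than a proof, and the gaps sit exactly where the real difficulty lies. First, your claim that closure under contragredients is ``automatic'' is not: $C_1$-cofiniteness is not manifestly preserved by taking contragredients, and in the Virasoro setting this is established only after a classification of the $C_1$-cofinite modules (they turn out to be the finite-length modules whose composition factors are the irreducible quotients of reducible Verma modules), which you never carry out. Second, Miyamoto's result gives $C_1$-cofiniteness of the fusion product but not that it is grading-restricted (lower-bounded with finite-dimensional weight spaces); showing that the fusion product actually lands back in the category --- rather than in some larger, non-grading-restricted completion --- is a separate and nontrivial step, again resting on the classification. Your paragraph on extracting generators from $M_i/C_1(M_i)$ gestures at this but does not address grading-restriction. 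Until these two points are supplied, the HLZ machinery cannot be invoked, so the argument as it stands does not close.
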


Let $\mathcal O_{1, p}$ be the category of $L^{\text{Vir}}(c_{1, p},0)$-modules whose objects are direct sums of the simple objects $L^{\text{Vir}}(c_{1, p}, h_{1, n})$
and $\mathcal O_{1, p}^{\text{even}}$ be the subcategory of $\mathcal O_{1, p}$ whose objects are direct sums of the simple objects $L^{\text{Vir}}(c_{1, p}, h_{1, 2n+1})$. By Theorem \ref{CFJRY}, these two categories have braided tensor category structure. Since
 \[
 \A{p} \cong  \bigoplus_{n=0}^\infty \rho_n \otimes L^{\text{Vir}}(c_{1, p}, h_{1, n+1})
 \]
 as $SU(2) \otimes L^{\text{Vir}}(c_{1, p}, 0)$-module, we have in analogy to Corollaries \ref{cor:tc} and \ref{cor:tceven}:
 \begin{corollary} \label{cor:tcVir} For $p$ in $\Z_{\geq 1}$,
the category $\mathcal O_{1, p} \cong {\rm Rep}_{A, F, \Omega}(SU(2))$ as braided tensor categories for some abelian $3$-cocycle $(F, \Omega)$ and in particular $\mathcal O_{1, p}$ is rigid. Moreover,
$\mathcal O_{1, p}^{\rm{even}} \cong \rm{Rep}(SO(3))$  as symmetric tensor categories.
\end{corollary}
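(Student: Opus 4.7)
The plan is to mirror the argument establishing Corollaries~\ref{cor:tc} and~\ref{cor:tceven}, with the role of $\V{p}$ replaced by the doublet $\A{p}$ and $\lksl$ replaced by $L^{\rm{Vir}}(c_{1,p}, 0)$. The essential ingredients are already in place: $\A{p}$ is a simple abelian intertwining algebra on which $SL(2,\C)$ (and hence its compact real form $SU(2)$) acts by automorphisms \cite{ALM}, with decomposition
\[
\A{p} \cong \bigoplus_{n=0}^\infty \rho_n \otimes L^{\rm{Vir}}(c_{1,p}, h_{1,n+1})
\]
as an $SU(2) \otimes L^{\rm{Vir}}(c_{1,p},0)$-module, the isotypic components being pairwise distinct simple $L^{\rm{Vir}}(c_{1,p},0)$-modules. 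Theorem~\ref{CFJRY} equips the category of $C_1$-cofinite $L^{\rm{Vir}}(c_{1,p}, 0)$-modules with vertex tensor category structure, and since simple Virasoro highest-weight modules are $C_1$-cofinite, $\mathcal O_{1,p}$ is a full subcategory inheriting that structure.

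The first main step is to apply Theorem~\ref{mcrae} to the pair $(\A{p}, SU(2))$. All hypotheses are satisfied: $\A{p}$ is a simple abelian intertwining algebra with $SU(2)$ acting as a compact group of continuous automorphisms containing $\widehat{A}$ for the relevant grading group $A$; the isotypic components $L^{\rm{Vir}}(c_{1,p}, h_{1,n+1})$ are distinct nonzero simple $(\A{p})^{SU(2)} = L^{\rm{Vir}}(c_{1,p}, 0)$-modules; and they all lie in the braided tensor category $\mathcal O_{1, p}$. Theorem~\ref{mcrae}(2) then produces the braided equivalence $\mathcal O_{1, p} \cong {\rm Rep}_{A, F, \Omega}(SU(2))$ for some abelian $3$-cocycle $(F, \Omega)$, and rigidity is inherited from $\rm{Rep}(SU(2))$ since twisting by an abelian $3$-cocycle preserves rigid duals.

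For the second assertion, pass to the $\Z_2$-fixed points. The orbifold $(\A{p})^{\Z_2} = \W{p}$ is a genuine vertex operator algebra rather than merely an abelian intertwining algebra, so no cocycle twist is required when applying Theorem~\ref{mcrae} a second time to $\W{p}$ together with its $PSL(2,\C) \supset SO(3)$-action from \cite{ALM}. The corresponding decomposition
\[
\W{p} \cong \bigoplus_{n=0}^\infty \rho_{2n} \otimes L^{\rm{Vir}}(c_{1,p}, h_{1, 2n+1})
\]
and Theorem~\ref{mcrae}(2) identify $\mathcal O_{1, p}^{\rm{even}} \cong \rm{Rep}(SO(3))$ as braided, and in fact symmetric, tensor categories, the symmetry being visible on the right-hand side.

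The main obstacle I anticipate is the careful verification that $\mathcal O_{1, p}$ sits inside the vertex tensor category afforded by Theorem~\ref{CFJRY} and is closed under its tensor product, so that Theorem~\ref{mcrae}(2) applies directly. Once this is in place, the remainder is a clean transcription of the $\V{p}$-argument with $\lksl$ replaced by $L^{\rm{Vir}}(c_{1,p},0)$.
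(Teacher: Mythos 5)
Your proposal is correct and follows essentially the same route as the paper: the authors likewise invoke Theorem \ref{CFJRY} for the braided tensor structure on $\mathcal O_{1,p}$, use the decomposition of $\A{p}$ as an $SU(2)\otimes L^{\rm{Vir}}(c_{1,p},0)$-module, and apply Theorem \ref{mcrae} ``in analogy to Corollaries \ref{cor:tc} and \ref{cor:tceven},'' with the even statement obtained from the $\Z_2$-orbifold $\W{p}$ and its $PSL(2,\C)$-action. The closure issue you flag at the end is also left implicit in the paper, so nothing is missing relative to their argument.
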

 By Theorem \ref{thm:uniqueness} there is a unique simple \voa{} structure on $\bigoplus_{n=0}^\infty  L^{\text{Vir}}(c_{1, p}, h_{1, 2n+1})$  and so any simple \voa{} that is isomorphic to the singlet algebra $\A{p}_0$ as a module for the Virasoro algebra is isomorphic to the singlet algebra $\A{p}_0$ as a \voa.
 The same argument as for Corollary \ref{cor:uniquessRp} applies and we also get uniqueness of $\B{p}$-algebras:
 \begin{corollary}\label{cor:uniquessBp}
 Let $\mathcal Y$ be a simple \voa{} that is isomorphic to $\B{p}$ as a module for the Virasoro algebra times the Heisenberg algebra. Then $\mathcal Y \cong \B{p}$.
 \end{corollary}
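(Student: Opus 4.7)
The plan is to run the argument used for Corollary \ref{cor:uniquessRp} with $\lksl$ replaced by the Virasoro algebra $L^{\rm{Vir}}(c_{1,p},0)$ and $\V{p}_0$ replaced by the singlet algebra $\A{p}_0 = \bigoplus_{s=0}^\infty L^{\rm{Vir}}(c_{1,p}, h_{1,2s+1})$. First I would form the Heisenberg coset $\mathcal Z := \rm{Com}(M_\varphi(1),\mathcal Y)$. The $U(1)$-weight-zero piece of the decomposition
\[
\B{p} = \bigoplus_{\ell \in \Z} \A{p}_\ell \otimes M_\varphi(1,-\ell)
\]
together with the hypothesis identifies $\mathcal Z$ with $\A{p}_0$ as a module over the Virasoro algebra.

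The next step is to promote this to an isomorphism of \voas. Corollary \ref{cor:tcVir} furnishes an equivalence of symmetric tensor categories $\mathcal O_{1,p}^{\rm{even}} \cong \rm{Rep}(SO(3))$, so by the Huang--Kirillov--Lepowsky correspondence \cite{HKL} simple \voa{} extensions of $L^{\rm{Vir}}(c_{1,p},0)$ inside $\mathcal O_{1,p}^{\rm{even}}$ are in bijection with simple commutative associative algebra objects in $\rm{Rep}(SO(3))$. Theorem \ref{thm:uniqueness}(1) singles out, up to isomorphism, a unique such algebra with underlying object $\bigoplus_{s=0}^\infty \rho_{2s}$. Since $\mathcal Z$ is simple (being the Heisenberg coset of the simple \voa{} $\mathcal Y$) and lies in $\mathcal O_{1,p}^{\rm{even}}$, the equivalence forces $\mathcal Z \cong \A{p}_0$ as \voas.

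Finally, the $U(1)$-action on $\mathcal Y$ induced by $M_\varphi(1) \subset \mathcal Y$ exhibits $\mathcal Y$ as an extension of $\A{p}_0 \otimes M(1)$ of the form $\bigoplus_{\ell \in \Z} \A{p}_\ell \otimes M_\varphi(1,-\ell)$. The needed braided tensor category structure on the relevant category of $\A{p}_0 \otimes M(1)$-modules is available by Theorem \ref{CFJRY} combined with \cite[Theorem 2.3]{CKLR} and \cite{CKM}; and exactly as in Subsection \ref{sec:sc}, Theorem \ref{mcrae} applied to the simple abelian intertwining algebra $\A{p} \otimes F_{-\frac{p}{2}}$ (with its diagonal $U(1)$-action) shows that each summand $\A{p}_\ell \otimes M_\varphi(1,-\ell)$ is a simple current over $\A{p}_0 \otimes M(1)$. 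Uniqueness of $\frac{1}{2}\Z$-graded simple current extensions via \cite[Proposition 2.15]{CGR} and \cite[Remark 3.11]{CKL} then forces $\mathcal Y \cong \B{p}$ as \voas.

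The main obstacle is that every piece of the Virasoro-side tensor category formalism rests on Theorem \ref{CFJRY}, quoted here as work in progress \cite{CFJRY}. A secondary point is to verify carefully that the $\A{p}_\ell$ really are invertible objects in $\mathcal O_{1,p}$, which uses the rigidity established in Corollary \ref{cor:tcVir} together with the fact that under the braided equivalence with ${\rm Rep}_{A,F,\Omega}(SU(2))$ they arise from the $U(1)$-weight decomposition of an invertible object. Once those foundations are granted, the argument runs in complete parallel with the affine case.
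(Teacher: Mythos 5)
Your proposal is correct and follows exactly the route the paper takes: the paper's argument is literally "the same argument as for Corollary \ref{cor:uniquessRp} applies," i.e.\ identify the Heisenberg coset of $\mathcal Y$ with $\A{p}_0$ as a \voa{} via Theorem \ref{thm:uniqueness} and Corollary \ref{cor:tcVir}, then invoke uniqueness of simple current extensions. You have in fact spelled out the dependencies (simplicity of the coset, invertibility of the $\A{p}_\ell$, reliance on Theorem \ref{CFJRY}) more explicitly than the paper does.
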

We remark that $h_{1, n}>h_{1, m}$ for $n<m$ and hence it follows inductively that any module $M$ for $L^{\text{Vir}}(c_{1, p},0)$ whose character (graded by conformal weight) coincides with  $\bigoplus_{n=0}^\infty  L^{\text{Vir}}(c_{1, p}, h_{1, 2n+s})$ is actually isomorphic as a Virasoro module to  $\bigoplus_{n=0}^\infty  L^{\text{Vir}}(c_{1, p}, h_{1, 2n+s})$. This in turn means that above corollary can be improved to:
\begin{corollary}\label{cor:uniquenessBp}
 Let $\mathcal Y$ be a simple \voa{} that is a module for $L^{\text{Vir}}(c_{1, p},0)$ times a rank one Heisenberg algebra, such that the character graded by conformal weight and Heisenberg weight of $\mathcal Y$ coincides with the one
 of $\B{p}$. Then $\mathcal Y \cong \B{p}$.
 \end{corollary}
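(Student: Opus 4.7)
The strategy is to promote the assumed equality of bigraded characters to an honest isomorphism of $L^{\rm{Vir}}(c_{1,p},0)\otimes M(1)$-modules between $\mathcal Y$ and $\B{p}$, and then invoke Corollary \ref{cor:uniquessBp}. I would begin by decomposing $\mathcal Y$ along the action of the rank-one Heisenberg $M(1)$: the hypothesis that $\mathcal Y$ admits a bigrading with the same Heisenberg spectrum as $\B{p}$ means that $\varphi(0)$ acts semisimply on $\mathcal Y$ with the same weights as those appearing in $\B{p}$, and since distinct Fock modules do not extend each other in any reasonable category of $M(1)$-modules, any such $M(1)$-module is a direct sum of Fock modules. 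Therefore
\[
\mathcal Y \cong \bigoplus_{\ell\in\Z} V^{(\ell)}\otimes M_\varphi(1,-\ell)
\]
as $L^{\rm{Vir}}(c_{1,p},0)\otimes M(1)$-modules, where each multiplicity space $V^{(\ell)}$ inherits a Virasoro action because the Virasoro and Heisenberg subalgebras commute in $\mathcal Y$.

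Dividing through by the (known) Fock characters, the bigraded character hypothesis identifies the Virasoro character of $V^{(\ell)}$ with that of $\bigoplus_{s\ge 0} L^{\rm{Vir}}(c_{1,p},h_{1,|\ell|+2s+1})$. I would then invoke the remark stated immediately before the corollary: because the conformal weights $h_{1,n}$ are strictly monotone in $n$, a vector of lowest conformal weight in $V^{(\ell)}$ is automatically Virasoro-primary, generates (by character matching) a submodule isomorphic to the simple module $L^{\rm{Vir}}(c_{1,p},h_{1,|\ell|+1})$, and splits off as a direct summand. Passing to the quotient and iterating yields $V^{(\ell)}\cong \bigoplus_{s\ge 0} L^{\rm{Vir}}(c_{1,p},h_{1,|\ell|+2s+1})$ as Virasoro modules, and re-tensoring with the Fock modules and summing over $\ell$ recovers an isomorphism $\mathcal Y\cong \B{p}$ of $L^{\rm{Vir}}(c_{1,p},0)\otimes M(1)$-modules. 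Corollary \ref{cor:uniquessBp} then upgrades this to an isomorphism of \voas, completing the argument.

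The main obstacle lies in the inductive peeling-off step: at each stage one has to verify both that the Verma module singular vectors over the primary under consideration vanish in $V^{(\ell)}$, and that the simple submodule they generate is a direct summand. The strict monotonicity of $h_{1,n}$ together with the exact match of conformal weight multiplicities guarantees that any nonzero singular vector would create low-conformal-weight vectors incompatible with the character, and the same character constraint forces the splitting. A minor auxiliary point is the semisimplicity of the $\varphi(0)$-action on $\mathcal Y$, which is implicit in the hypothesis that $\mathcal Y$ shares the bigraded character of $\B{p}$.
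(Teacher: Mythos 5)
Your argument is correct and follows essentially the same route as the paper: the paper's proof is precisely the remark preceding the corollary, which uses the strict monotonicity of the $h_{1,n}$ to upgrade the character coincidence to an isomorphism of $L^{\rm{Vir}}(c_{1,p},0)\otimes M(1)$-modules and then invokes Corollary \ref{cor:uniquessBp}. You merely spell out more explicitly the decomposition into Fock modules and the inductive peeling-off of simple Virasoro summands, which the paper compresses into the phrase ``it follows inductively.''
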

As a final remark in this section, let us note that a similar uniqueness Theorem applies for the triplet algebras $\W{p}$. For this let $\mathcal Z$ be a simple \voa{} of central charge $c_{1, p}$. Assume that $U(1)$ is a subgroup of the automorphism group of $\mathcal Z$ so that
\[
\mathcal Z \cong \bigoplus_{n \in \mathbb Z} \mathbb C_n \otimes \mathcal Z_n
 \]
 as $U(1) \otimes \mathcal Z^{U(1)}$-module. Then the $\mathcal Z_n$ are all simple $\mathcal Z^{U(1)}$-modules by \cite{DLM} and the $\mathcal Z_n$ are simple currents by \cite[Thm. 3.1]{CKLR} (since vertex tensor category assumption is satisfied by \cite{CFJRY} together with \cite{CKM}). We assume that the character of $\mathcal Z$, graded by $U(1)$ weight and also by conformal weight coincides with the corresponding graded character of $\W{p}$. Then as just remarked this means that the two algebras are already isomorphic as $U(1) \otimes \text{Vir}(c_{1, p}, 0)$-modules. Especially our uniqueness Theorem thus implies that $\mathcal Z^{U(1)} =\mathcal Z_0$ is isomorphic to the singlet \voa{} $\A{p}_0$ as a \voa. Uniqueness of simple current extensions then implies that $\mathcal Z \cong \W{p}$ as \voas. We summarize:
 \begin{corollary}
 Let $\mathcal Z$ be a simple \voa{} of central charge $c_{1, p}$ with $U(1)$ as subgroup of automorphism such that the character, graded by $U(1)$ weight and conformal weight, coincides with the graded character of $\W{p}$. Then $\mathcal Z\cong \W{p}$ as \voas.
 \end{corollary}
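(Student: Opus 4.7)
The plan is to follow the outline sketched in the paragraph immediately preceding the corollary, and to assemble the argument in four steps.

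First, I would exploit the assumed $U(1)$-action to decompose
\[
\mathcal Z \cong \bigoplus_{n\in\Z} \C_n \otimes \mathcal Z_n
\]
as a $U(1) \otimes \mathcal Z^{U(1)}$-module, where $\mathcal Z_n$ is the weight-$n$ subspace for the $U(1)$-action. By \cite{DLM} each $\mathcal Z_n$ is a simple $\mathcal Z^{U(1)}$-module, and by \cite[Thm.~3.1]{CKLR}, whose vertex tensor category hypothesis is supplied by Theorem \ref{CFJRY} together with \cite{CKM}, each $\mathcal Z_n$ is in fact a simple current for $\mathcal Z^{U(1)}$.

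Second, I would upgrade the bigraded character assumption to a module isomorphism. The remark preceding Corollary \ref{cor:uniquenessBp}, based on the strict inequality $h_{1,n} > h_{1,m}$ for $n<m$, shows inductively that any $L^{\rm{Vir}}(c_{1,p},0)$-module whose conformal-weight character agrees with $\bigoplus_{s\geq 0} L^{\rm{Vir}}(c_{1,p}, h_{1,2s+r})$ is already isomorphic to it as a Virasoro module. Applied $U(1)$-graded piece by piece, this forces $\mathcal Z \cong \W{p}$ as a $U(1)\otimes L^{\rm{Vir}}(c_{1,p},0)$-module; in particular $\mathcal Z^{U(1)} \cong \A{p}_0$ as a Virasoro module.

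Third, I would invoke the singlet analogue of Theorem \ref{thm:uniqueness}(2): using the symmetric tensor equivalence $\mathcal O_{1,p}^{\rm{even}} \cong \rm{Rep}(SO(3))$ of Corollary \ref{cor:tcVir}, the uniqueness of the simple commutative associative algebra structure on $\bigoplus_{s\geq 0} \rho_{2s}$ in $\rm{Rep}(SO(3))$ from Theorem \ref{thm:uniqueness}(1), and the equivalence between \voa{} extensions and commutative associative algebra objects \cite{HKL, CKM}, the simple \voa{} structure on $\bigoplus_{s\geq 0} L^{\rm{Vir}}(c_{1,p}, h_{1,2s+1})$ is unique up to isomorphism. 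This promotes the Virasoro-module isomorphism $\mathcal Z^{U(1)} \cong \A{p}_0$ to an isomorphism of vertex operator algebras. Finally, both $\mathcal Z$ and $\W{p}$ are simple-current extensions of $\A{p}_0$ by the family $\{\mathcal Z_n\}_{n\in\Z}$, which matches the corresponding family for $\W{p}$ by the module-level identification of the second step; by \cite[Proposition~2.15]{CGR} combined with \cite[Remark~3.11]{CKL}, such simple-current extensions are unique, so $\mathcal Z \cong \W{p}$ as \voas.

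The main obstacle I anticipate is the third step, namely extracting the singlet-uniqueness statement from the symmetric tensor equivalence and the algebra-object correspondence in a form fully parallel to Theorem \ref{thm:uniqueness}(2). Once that singlet analogue is in place, the remainder of the argument reduces to the same bookkeeping about simple currents and conformal-weight monotonicity that drives the proofs of Corollaries \ref{cor:uniquessBp} and \ref{cor:uniquenessBp}.
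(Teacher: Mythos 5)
Your proposal is correct and follows essentially the same route as the paper: decompose $\mathcal Z$ via the $U(1)$-action into simple currents using \cite{DLM} and \cite[Thm.~3.1]{CKLR}, use the monotonicity $h_{1,n}>h_{1,m}$ to promote the bigraded character identity to a $U(1)\otimes L^{\rm{Vir}}(c_{1,p},0)$-module isomorphism, apply the singlet uniqueness coming from Theorem \ref{thm:uniqueness} and Corollary \ref{cor:tcVir} to identify $\mathcal Z^{U(1)}\cong \A{p}_0$ as \voas, and conclude by uniqueness of simple current extensions. The step you flag as a potential obstacle is already settled in the paper in the discussion preceding Corollary \ref{cor:uniquessBp}, exactly by the argument you outline.
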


\section{$\cW$-algebras and chiral algebras of Argyres-Douglas theories}\label{sec:AD}

Let $\mathfrak{g}=\mathfrak{sl}_n$ and let $f$ be a nilpotent element corresponding to the partition $(n-2, 1, 1)$ of $n$. This means that there exists an $\sltwo$-triple $e, h, f$ in $\mathfrak{sl}_n$ such that the standard representation of $\mathfrak{sl}_n$ decomposes as $\rho_{n-2} \oplus \rho_1 \oplus \rho_1$ under this $\sltwo$-action. We denote the corresponding simple $\cW$-algebra at level $\ell$ by $\cW_\ell(\mathfrak{sl}_n, f)$. It contains an affine subalgebra of $\mathfrak{gl}_{2}$ at level $k=\ell+n-3$ as subalgebra. We set $p=n-1$ and $\ell = -\frac{p^2-1}{p}$ so that $k=\ell + p -2 = -2+ \frac{1}{p}$. Note that $\ell+n = \frac{n}{n-1}$ is a boundary admissible level.
The main result of \cite[Theorem 5.7]{C} tells us that for this choice $\cW_\ell(\mathfrak{sl}_{p+1}, f) \cong \R{p}$ as $\lksl \otimes M(1)$-modules. By Corollary \ref{cor:uniquessRp} we have that
 \begin{theorem}\label{thm:RpW} Let $\ell = -\frac{p^2-1}{p}$. Then
 $\cW_\ell(\mathfrak{sl}_{p+1}, f) \cong \R{p}$ as \voas.
 \end{theorem}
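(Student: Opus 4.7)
The plan is to reduce the statement to the uniqueness Corollary \ref{cor:uniquessRp}, which tells us that any simple vertex operator algebra that is isomorphic to $\R{p}$ as an $\lksl\otimes M(1)$-module must already be isomorphic to $\R{p}$ as a vertex operator algebra. Thus the whole proof consists of checking that $\cW_\ell(\mathfrak{sl}_{p+1}, f)$ is simple, that it contains $\lksl\otimes M(1)$ as a subalgebra, and that its decomposition as a module for this subalgebra matches the decomposition of $\R{p}$ given earlier in Section \ref{sec:Rp}.

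The first step is to identify the affine subalgebra. The nilpotent $f$ associated to the partition $(p-1,1,1)$ of $p+1$ comes from an $\sltwo$-triple in $\mathfrak{sl}_{p+1}$ whose centralizer is $\mathfrak{gl}_2$, acting on the two trivial blocks. By the general structure of Kac--Wakimoto reduction \cite{KW3}, the universal $\cW^\ell(\mathfrak{sl}_{p+1}, f)$ contains an affine $\mathfrak{gl}_2$-vertex subalgebra at shifted level $k=\ell+(p+1)-3 = -2+\tfrac1p$, and a direct computation shows that $\ell+h^\vee = \tfrac{p+1}{p}$ is boundary admissible for $\mathfrak{sl}_{p+1}$. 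At boundary admissible levels one verifies, as in \cite{AKMP,AKMP2}, that the image of the affine subalgebra inside the simple quotient $\cW_\ell(\mathfrak{sl}_{p+1}, f)$ is the simple affine algebra $L_k(\mathfrak{gl}_2)\cong \lksl\otimes M(1)$; equivalently, the maximal ideal of the $\mathfrak{gl}_2$-current algebra at this level embeds into the maximal ideal of the universal $\cW$-algebra.

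Next, one invokes the character/branching computation \cite[Theorem 5.7]{C} (together with the character tables of \cite{ACKL}), which identifies the graded character of $\cW_\ell(\mathfrak{sl}_{p+1}, f)$, with respect to conformal weight and the $\lksl\otimes M(1)$-weights, with the graded character of $\R{p}$ given by the decomposition
\[
\R{p} \cong \bigoplus_{\ell\in\Z}\bigoplus_{s=0}^{\infty} \LL{|\ell|+2s}{p}\otimes M_\varphi(1,-\ell).
\]
Since $\rm{KL}_k(\sltwo)$ is semisimple by \cite{K} and Fock-module characters of $M(1)$ are linearly independent, this character equality upgrades to an isomorphism of $\lksl\otimes M(1)$-modules.

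Finally, $\cW_\ell(\mathfrak{sl}_{p+1}, f)$ is simple by definition, so the hypotheses of Corollary \ref{cor:uniquessRp} are satisfied and one concludes $\cW_\ell(\mathfrak{sl}_{p+1}, f)\cong \R{p}$ as vertex operator algebras. The main subtlety I expect to encounter is the precise control of the affine subalgebra in the simple quotient; everything else is a clean application of the uniqueness machinery developed in Section \ref{sec:6}. If $L_k(\mathfrak{gl}_2)$ were to fail to embed into $\cW_\ell(\mathfrak{sl}_{p+1}, f)$ as the full affine subalgebra at the reduced level, the strategy would break down, so this is the step on which care must be taken, and this is precisely why the boundary admissibility of $\ell$ is crucial.
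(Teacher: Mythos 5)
Your proposal is correct and follows essentially the same route as the paper: identify the affine $\mathfrak{gl}_2$-subalgebra at level $k=\ell+(p+1)-3=-2+\tfrac{1}{p}$, invoke \cite[Theorem 5.7]{C} to match $\cW_\ell(\mathfrak{sl}_{p+1},f)$ with $\R{p}$ as an $\lksl\otimes M(1)$-module, and conclude via the uniqueness result of Corollary \ref{cor:uniquessRp}. The only difference is one of emphasis — you spell out how the character identity upgrades to a module isomorphism and flag the simplicity of the affine image as the delicate point, whereas the paper takes the module-level statement directly from \cite[Theorem 5.7]{C} — but the logical skeleton is identical.
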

 Argyres-Douglas theories are four dimensional $\mathcal N=2$ superconformal field theories. They have associated chiral algebras, that are actually \voas{} \cite{Beemetal}. The theories are characterized by pairs of Dynkin diagrams and we are interested in the case of $(A_1, D_{2p})$, see \cite{BN1, CS}. Usually not much is known about these chiral algebras. However in this case, the Schur-index, that is the character of the chiral algebra is known and it agrees with the character of $\cW_\ell(\mathfrak{sl}_{p+1}, f)$ by \cite[Theorem 5.7]{C}. Moreover the flavour symmetries of these algebras are known and in this case the translation is that the chiral algebra is an extension of $\lksl \otimes M(1)$ with $k=-2+\frac{1}{p}$. The question of simplicity has not appeared in the physics literature yet, but it is a natural requirement that chiral algebras of these gauge theories will most often be simple.
 Let us summarize
 \begin{remark}
 The chiral algebra $\mathcal X$ of $(A_1, D_{2p})$ Argyres-Douglas theories has the properties
 \begin{enumerate}
 \item $\mathcal X$ is a simple \voa,
 \item $\mathcal X$ is an extension of $\lksl \otimes M(1)$ with $k=-2+\frac{1}{p}$.
 \item The character of $\mathcal X$ coincides with the one of $\R{p}$.
 \end{enumerate}
 \end{remark}
 \begin{theorem}
$\R{p}$ is the chiral algebra of $(A_1, D_{2p})$ Argyres-Douglas theories.
 \end{theorem}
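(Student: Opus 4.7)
The plan is to reduce the statement to Corollary \ref{cor:uniquessRp} by verifying that the three properties listed in the preceding remark force $\mathcal X$ to be isomorphic to $\R{p}$ as an $\lksl \otimes M(1)$-module. First I would use property (2), which gives an embedding $\lksl \otimes M(1) \hookrightarrow \mathcal X$, to decompose $\mathcal X$ according to the action of the Heisenberg subalgebra $M(1)$. Concretely, exponentiating the zero mode of $\varphi$ produces a $U(1)$-action on $\mathcal X$, yielding a grading $\mathcal X = \bigoplus_{\ell \in \Z} \mathcal X^\ell$ by Heisenberg charge. Each $\mathcal X^\ell$ is then an $\lksl \otimes M(1)$-module of fixed $\varphi(0)$-weight, so it must be of the form $\mathcal X^\ell \cong N_\ell \otimes M_\varphi(1,-\ell)$ with $N_\ell$ an $\lksl$-module.

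Next I would argue that each $N_\ell$ lies in $\mathrm{KL}_k$ with $k=-2+\frac{1}{p}$. Since $\mathcal X$ is a VOA, its conformal weights are bounded below in every finite Heisenberg-weight subspace; combined with $\mathcal X \subset \mathcal X$ being finitely generated over $\lksl$ on each such subspace (since the graded character coincides with that of $\R{p}$, whose graded pieces are finite dimensional), this forces $N_\ell$ to be an ordinary $\lksl$-module, hence an object in $\mathrm{KL}_k$. By Kazama's semisimplicity result \cite{K} (used already in the proof of Corollary \ref{cor:tc}), $\mathrm{KL}_k$ is semisimple with simple objects the $\LL{n}{p}$, so $N_\ell \cong \bigoplus_{n} c_{\ell,n} \LL{n}{p}$ for some non-negative integer multiplicities $c_{\ell,n}$.

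The multiplicities $c_{\ell, n}$ are determined by the character of $\mathcal X$ graded by conformal weight and Heisenberg weight: indeed, the characters of the simple modules $\LL{n}{p} \otimes M_\varphi(1,-\ell)$ are linearly independent (the $\LL{n}{p}$ have distinct minimal conformal weights $n(n+2)/(4(k+2))$ and distinct top-level dimensions), so matching the graded character of $\mathcal X$ against the known graded character of $\R{p}$ forces $c_{\ell, n} = \delta_{n \equiv \ell \,(2)}$ whenever $n \geq |\ell|$ and zero otherwise. This gives
\[
\mathcal X \cong \bigoplus_{\ell \in \Z} \bigoplus_{s=0}^{\infty} \LL{|\ell|+2s}{p} \otimes M_\varphi(1,-\ell)
\]
as an $\lksl \otimes M(1)$-module, which is precisely the decomposition of $\R{p}$. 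Combined with property (1) that $\mathcal X$ is simple, Corollary \ref{cor:uniquessRp} yields $\mathcal X \cong \R{p}$ as VOAs.

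The main obstacle I anticipate is the second step: verifying that the pieces $N_\ell$ genuinely lie in $\mathrm{KL}_k$ rather than in some larger category containing, say, relaxed or spectrally flowed modules that could in principle have the same graded character contribution when summed. The cleanest way around this is to work weight space by weight space: at each fixed conformal weight, the finite dimensionality of $\mathcal X$ (guaranteed by the character matching) combined with the lower-boundedness of $L_0$ on $\mathcal X$ rules out any non-ordinary module appearing as a summand. Once this is verified, the remaining steps are essentially bookkeeping and an appeal to the uniqueness corollary.
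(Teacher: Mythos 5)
Your proposal is correct and follows the same route the paper intends: reduce to Corollary \ref{cor:uniquessRp} via the three physics-derived properties of the chiral algebra listed in the preceding remark. You in fact supply the one step the paper leaves implicit for $\R{p}$ (it records the analogous step for $\B{p}$ as Corollary \ref{cor:uniquenessBp}), namely that character equality together with being a simple extension of $\lksl\otimes M(1)$ forces an isomorphism of $\lksl\otimes M(1)$-modules — using the Heisenberg-charge decomposition, semisimplicity of $\mathrm{KL}_k$, and the distinct minimal conformal weights of the $\LL{n}{p}$; the only nit is that the semisimplicity reference \cite{K} is to Kumar, not ``Kazama.''
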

 Analogous results hold for the $\B{p}$-algebra. Theorem 27 of \cite{ACKR} says that the character of $\B{p}$ and of the simple subregular $\cW$-algebra of $\mathfrak{sl}_{p-1}$,  $\cW_\ell(\mathfrak{sl}_{p-1}, f_{\text{sub}})$,
 at level $\ell = 1-p +\frac{p-1}{p}$ coincide. The central charges of the Virasoro subalgebras also coincide so that Corollary \ref{cor:uniquenessBp} applies, i.e.
 \begin{theorem} \label{thm:BpW} Let $\ell = -\frac{(p-1)^2}{p}$. Then
 $\cW_\ell(\mathfrak{sl}_{p-1}, f_{\text{sub}}) \cong \B{p}$ as \voas.
 \end{theorem}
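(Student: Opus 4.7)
The plan is to verify the hypotheses of Corollary~\ref{cor:uniquenessBp} for the \voa{} $\mathcal Y = \cW_\ell(\mathfrak{sl}_{p-1}, f_{\text{sub}})$ at $\ell = -(p-1)^2/p$ and then invoke the uniqueness result. Simplicity is automatic from the convention that $\cW_\ell$ denotes the simple quotient of the universal $\cW^\ell$.

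Next I would identify a distinguished $L^{\text{Vir}}(c_{1,p}, 0) \otimes M(1)$ subalgebra inside $\mathcal Y$. Since $f_{\text{sub}}$ corresponds to the partition $(p-2, 1)$ of $p-1$, the reductive part of the centralizer of the associated $\sltwo$-triple contains a one-dimensional abelian direction; by the general structure theory of quantum Hamiltonian reduction this produces a weight-one generator in $\cW^\ell$, and hence a rank one Heisenberg subalgebra $M(1) \subset \mathcal Y$ upon passing to the simple quotient. Together with the conformal vector of the W-algebra this yields an embedding $L^{\text{Vir}}(c, 0) \otimes M(1) \hookrightarrow \mathcal Y$, and a direct computation via the standard Kac--Wakimoto central charge formula at $\ell = -(p-1)^2/p$ gives $c = 1 - 6(p-1)^2/p = c_{1, p}$, matching the Virasoro central charge of $\B{p}$.

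The character identification of $\mathcal Y$ with $\B{p}$ graded by conformal weight is precisely \cite[Theorem~27]{ACKR}. After normalizing the Heisenberg generator of $\mathcal Y$ against that of $\B{p}$, the bi-graded character match also holds, and Corollary~\ref{cor:uniquenessBp} then delivers the isomorphism $\mathcal Y \cong \B{p}$ as \voas.

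The main obstacle I anticipate is not conceptual but a matter of normalization: one must pin down the scaling of the Heisenberg generator in $\cW_\ell(\mathfrak{sl}_{p-1}, f_{\text{sub}})$ so that its charge grading agrees with the $\varphi$-grading of $\B{p}$ coming from the lattice factor $F_{-p/2}$. Without this alignment only the conformal-weight character identity of \cite{ACKR} is available, and the bi-graded hypothesis of Corollary~\ref{cor:uniquenessBp} cannot be invoked directly. Once the Heisenberg fields are correctly aligned (which amounts to rescaling by a single nonzero constant and checking eigenvalues on a generating weight-one vector), the uniqueness theorem yields the result with no further input.
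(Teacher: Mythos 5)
Your proposal follows essentially the same route as the paper: the character identity of \cite[Theorem 27]{ACKR} together with the matching Virasoro central charge is fed directly into Corollary \ref{cor:uniquenessBp}, which is exactly the paper's (one-line) argument for Theorem \ref{thm:BpW}. Your extra care in locating the rank one Heisenberg subalgebra of $\cW_\ell(\mathfrak{sl}_{p-1}, f_{\text{sub}})$ and normalizing its generator so that the bi-graded character hypothesis of Corollary \ref{cor:uniquenessBp} is genuinely satisfied is a refinement of the same argument rather than a different approach, and it addresses a point the paper passes over silently.
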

 The chiral algebra of $(A_1, A_{2p-3})$ Argyres-Douglas theories is a  \voa{} whose character and central charge coincide with the corresponding data of the $\B{p}$-algebra \cite{CS, BN1, C}. Again it is natural to require it to be a simple \voa.
 \begin{remark}
 The chiral algebra $\mathcal Y$ of $(A_1, A_{2p-3})$ Argyres-Douglas theories has the properties
 \begin{enumerate}
 \item $\mathcal Y$ is a simple \voa,
 \item $\mathcal Y$ is an extension of $L^{\text{Vir}}(c_{1, p}, 0) \otimes M(1)$.
 \item The character of $\mathcal X$ coincides with the one of $\B{p}$.
 \end{enumerate}
 \end{remark}
The uniqueness result (Corollary \ref{cor:uniquenessBp}) tells us that
 \begin{theorem}
$\B{p}$ is the chiral algebra of $(A_1, A_{2p-3})$ Argyres-Douglas theories.
 \end{theorem}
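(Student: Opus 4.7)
The plan is to apply Corollary \ref{cor:uniquenessBp} directly. The preceding remark lists exactly the three structural properties of the chiral algebra $\mathcal Y$ of the $(A_1, A_{2p-3})$ Argyres-Douglas theory that are needed as hypotheses: (i) $\mathcal Y$ is a simple \voa, (ii) $\mathcal Y$ is an extension of $L^{\text{Vir}}(c_{1,p},0) \otimes M(1)$, hence in particular a module for this subalgebra, and (iii) the (Schur) character of $\mathcal Y$ coincides with that of $\B{p}$. Property (iii), together with the central charge matching and the identification of the flavor symmetry with a rank one Heisenberg algebra, was established via the physics calculations summarized in the references \cite{CS, BN1, C}.

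The only point requiring attention is that Corollary \ref{cor:uniquenessBp} is formulated using the character graded jointly by conformal weight \emph{and} Heisenberg weight, whereas the remark above refers only to the character. First I would observe that the Schur index of an Argyres-Douglas theory with flavor symmetry is naturally a function of both the superconformal fugacity and the flavor fugacity, and under the standard dictionary this flavor fugacity is precisely the Heisenberg grading variable on the chiral algebra side. Thus the physics input furnishes the doubly graded character, and matching with $\B{p}$ in the singly graded form upgrades to matching in the doubly graded form once one identifies the flavor fugacity with the Heisenberg zero-mode; this identification is part of the structural data (ii).

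With all hypotheses of Corollary \ref{cor:uniquenessBp} verified, that corollary yields an isomorphism $\mathcal Y \cong \B{p}$ of vertex operator algebras, and the theorem follows. There is no genuine obstacle left at this stage: the conceptual content has already been discharged in establishing the tensor category results of Section \ref{sec:6}, the uniqueness theorem for simple current extensions applied to the singlet algebra $\A{p}_0$, and the Virasoro vertex tensor category input from \cite{CFJRY}. The present theorem is the final packaging of those ingredients together with the physics input of \cite{CS, BN1, C}, and as such the argument is a one-line citation of Corollary \ref{cor:uniquenessBp}.
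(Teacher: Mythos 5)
Your proposal is correct and follows exactly the paper's own argument: the paper lists the three properties of the chiral algebra in the preceding remark and then deduces the theorem as an immediate application of Corollary \ref{cor:uniquenessBp}. Your additional observation about upgrading the singly graded character match to the doubly graded one via the flavor fugacity is a sensible clarification of a point the paper leaves implicit, but it does not change the route.
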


\section{Quantum Hamilton reduction}\label{sec:8}

The aim of this section is to prove that $\B{p}$ is as a \voa{} the quantum Hamiltonian reduction of $\R{p}$ and also that $\A{p}$ is the quantum Hamiltonian reduction of $\V{p}$. For this we need to understand the reduction of modules of $\Pi(0)$ first.

\subsection{Reduction of  $\Pi(0)$-modules }

Recall that $\Pi(0) = M (1) \otimes {\C}[\Z c]$, and we have modules $\Pi_r (\lambda) = \Pi(0) . e^{ r \mu + \lambda c}$. We
 set $\Pi (\lambda) := \Pi_ 0 (\lambda)$.

In this section we shall introduce the Drinfeld-Sokolov reduction cohomology for $V(\mathfrak{n})$-modules and apply for modules which we constructed above, where $\mathfrak{n} = \C e \subset \sltwo$ and $V(\mathfrak{n})$ is the vertex subalgebra of $V_k(\sltwo)$ generated by $e(z)$. Let $F$ be the fermionic vertex superalgebra generated by the fields
$$ \Psi^{+}(z) = \sum_{n \in {\Z} } \Psi^+ (n) z^{-n-1}, \ \Psi^{-} (z) = \sum_{n \in {\Z} } \Psi^{-} (n)  z ^{-n}$$ such that the components of the fields $\Psi^{\pm} (z)$ satisfy the anti-commutation relation for the Clifford algebra
$$ \{ \Psi^{\pm} (n), \Psi ^{\pm} (m) \} = 0, \quad \{ \Psi^{\pm} (n), \Psi^{\mp} (m) \} = \delta_{n+m,0} \quad (n,m \in {\Z}). $$
The conformal vector
$$\omega_{fer} = \Psi^{-} (-1) \Psi^{+} (-1) {\vak}$$
defines on $F$ the structure of a vertex operator superalgebra with central charge $c=-2$.
The fermionic vertex superalgebra $F$ has the charge decomposition by $\operatorname{charge}(\Psi^\pm (n)) = \mp 1$, and we denote by $F^i$ the homogeneous subspace of $F$ with the charge degree $i$. We have $F = \bigoplus_{i \in \Z} F^i$. By using the boson-fermion correspondence $F$ can be realized as lattice vertex superalgebra
\begin{align*}
F \xrightarrow{\sim} V_{\Z \phi} = M_{\phi} (1) \otimes {\C}[\Z \phi],\quad
\Psi^\pm \mapsto e^{\mp \phi},\quad
\mbox{ where} \quad  \la \phi, \phi \ra = 1,
\end{align*}
and conformal vector is given by
$$\omega_{fer} = \frac{1}{2} ( \phi(-1) ^2 - \phi(-2) ) {\vak}.$$

Given a $V(\mathfrak{n})$-module $M$, set the complex $C(M) = M \otimes F$ and the differential $d_{\mathrm{DS} (0)}$, where
\begin{align*}
d_{\mathrm{DS}} = (e+1) \otimes e^{\phi} \in V(\mathfrak{n}) \otimes F.
\end{align*}
Then $C(M) = \bigoplus_{i \in \Z}C^i(M)$, where $C^i(M) = M \otimes F^i$. Since $d_{\rm{DS} (0)}\cdot C^i(M) \subset C^{i+1}(M)$ and $d^2_{\rm{DS} (0)} = 0$, the pair $(C(M), d_{\rm{DS} (0)})$ forms the cochain complex. The Drinfeld-Sokolov reduction $H_{\rm{DS}}^\bullet(M)$ for $M$ is defined by
\begin{align*}
H_{\mathrm{DS}}^\bullet(M) = H^\bullet(C(M), d_{\mathrm{DS} (0)}).
\end{align*}
Since $d_{\mathrm{DS} (0)}$ is a vertex operator of $d_{\mathrm{DS}}$, if $V$ is a vertex superalgebra with a map $V(\mathfrak{n}) \rightarrow V$ of vertex superalgebras, $H_{\mathrm{DS}}(V)$ inherits a vertex superalgebra structure from that of $C(V)$. Moreover, if $M$ is a $V$-module, $H_{\mathrm{DS}}(M)$ is a $H_{\mathrm{DS}}(V)$-module.

Consider now the Drinfeld-Sokolov reductions for the vertex algebra $\Pi(0)$ with a vertex algebra homomorphism
\begin{align*}
V(\mathfrak{n}) \ni e \mapsto e^c \in \Pi(0).
\end{align*}
and its irreducible modules $\Pi(\lambda)$, $\Pi_r (\lambda)$.

\begin{proposition} \label{pom-1} ${}$
\begin{enumerate}
\item $ H_{\mathrm{DS}}^i(\Pi(0)) = \delta_{i,0}{\C} {\vak}$,
\item $H_{\mathrm{DS}}^i (\Pi(\lambda) ) = \delta_{i,0}{\C} e ^{\lambda(\alpha+ \beta)}$,
\item $H_{\mathrm{DS}}  (\Pi _r ( \lambda) )  = 0 $  if $   r \ne 0$.
\end{enumerate}
\end{proposition}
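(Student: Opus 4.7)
The complex computing $H_{\mathrm{DS}}^\bullet(M)$ for a $\Pi(0)$-module $M$ is $C(M) = M \otimes F$ with differential $d_{\mathrm{DS}(0)} = Q_1 + Q_2$, where $Q_1 = (e^{c+\phi})_0$ and $Q_2 = (e^\phi)_0$. Both $Q_1, Q_2$ are nilpotent fermionic screenings (since $\langle c+\phi, c+\phi\rangle = \langle \phi, \phi\rangle = 1$) that anticommute, because the OPE $Y(e^{c+\phi},z)Y(e^\phi,w)$ has no $(z-w)^{-1}$ pole (as $\langle c+\phi,\phi\rangle = 1$). My overall plan is: first verify by OPE that $e^{\lambda c}\otimes\mathbf{1}$ is a cocycle for every $\lambda$ (both $Q_1,Q_2$ annihilate it since $\langle c+\phi,\lambda c\rangle = \langle \phi,\lambda c\rangle = 0$ and $(e^\phi)_0\mathbf{1}=0$); then treat parts (1)--(2) by a filtration / Euler--Poincar\'e argument on finite-dimensional conformal-weight subspaces; and handle part (3) by exploiting the conserved charge $c(0)$.

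For parts (1) and (2), I would equip $\Pi(0)\otimes F$ with the conformal vector coming from the Wakimoto embedding (so that $e^c$ has weight $1$ and $d_{\mathrm{DS}(0)}$ is a fermion-degree-one operator of weight zero), making every conformal-weight subspace finite-dimensional. On each such subspace the Euler--Poincar\'e principle applies, and a Jacobi-triple-product style computation shows that the supercharacter of $C(\Pi(\lambda))$ matches the character of the candidate one-dimensional cohomology $\mathbb C\,e^{\lambda c}\otimes\mathbf{1}$ in fermion degree zero. It then remains to verify that the cohomology is concentrated in degree $0$; for this I would filter by the $c$-lattice degree, under which $Q_1$ strictly raises and $Q_2$ preserves the degree. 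A Borcherds identity computation yields the key homotopy relation $\{d_{\mathrm{DS}(0)},\Psi^+(-1)\} = \mathrm{id} + \epsilon\,(e^c)_{-1}$ with $\epsilon=\pm 1$ a cocycle sign. On each finite-dimensional weight slice the $(e^c)_{-1}$-tower terminates, and one derives iteratively that every cocycle of nonzero fermion charge is a coboundary, and that every degree-zero cocycle is congruent modulo coboundaries to a multiple of the minimal-$c$-degree representative $e^{\lambda c}\otimes\mathbf{1}$.

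For part (3), the new input is the operator $c(0) = \tfrac{2}{k}(\mu(0)-\nu(0))$, viewed as a Heisenberg zero mode of $\Pi(0)$; it commutes with $d_{\mathrm{DS}(0)}$ (since $\langle c,c+\phi\rangle = \langle c,\phi\rangle = 0$) and acts as the scalar $r$ on all of $C(\Pi_r(\lambda))$. The plan is to construct an operator $T$ of fermion degree $-1$, of the form $T = \Psi^+(-1)\cdot S$ for a bosonic $S$ built from modes of $\nu$ and $d$, so that $\{d_{\mathrm{DS}(0)}, T\} = c(0)$ on $C(\Pi_r(\lambda))$; the Heisenberg correction $S$ is precisely tuned to cancel the $(e^c)_{-1}$-tower appearing in the homotopy identity of parts (1)--(2), leaving only the scalar $c(0)$. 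Granted such a $T$, every cocycle $v$ with $r\neq 0$ satisfies $r\,v = c(0)\,v = d_{\mathrm{DS}(0)}(Tv)$ and is hence a coboundary, forcing $H_{\mathrm{DS}}^\bullet(\Pi_r(\lambda)) = 0$. The construction and verification of $T$ is what I expect to be the main technical obstacle, and it is also precisely the step that separates $r=0$ from $r\neq 0$: on $\Pi(\lambda) = \Pi_0(\lambda)$ the scalar $c(0)$ vanishes, so the homotopy degenerates and leaves behind the residual class identified in parts (1)--(2).
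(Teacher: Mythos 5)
Your setup (the two anticommuting fermionic screenings, the cocycle $e^{\lambda c}\otimes\mathbf{1}$, and the homotopy relation $\{d_{\mathrm{DS}(0)},(e^{-\phi})_{-1}\}=\mathrm{id}+(e^{c})_{-1}$, which is indeed correct) is sound, but two load-bearing claims in your plan fail, both for the same reason: $c$ is \emph{isotropic} and the lattice $\Z c$ is infinite in both directions. First, no conformal vector on $\Pi(0)\otimes F$ makes the conformal-weight subspaces finite-dimensional: the weight of $e^{ic}$ can only depend linearly on $i$ (the quadratic term $\tfrac12\langle ic,ic\rangle$ vanishes), so every weight space contains infinitely many lattice sectors and the Euler--Poincar\'e/supercharacter step has no finite-dimensional slices to run on. Second, the ``$(e^{c})_{-1}$-tower'' does not terminate: $(e^{c})_{-1}$ shifts the lattice degree by $+1$ indefinitely, and there is no ``minimal-$c$-degree representative'' in $\Pi(\lambda)=\Pi(0).e^{\lambda c}$ since the degrees run over $\lambda+\Z$. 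Worse, the naive filtration by lattice degree is actively misleading here: the associated graded differential $(e^{\phi})_{0}$ has a genuine contracting homotopy, so an unconverging spectral-sequence or geometric-series inversion of $\mathrm{id}+(e^{c})_{-1}$ would ``prove'' that \emph{all} of the cohomology vanishes, contradicting parts (1)--(2). Finally, for part (3) you defer the construction of the homotopy $T$ with $\{d_{\mathrm{DS}(0)},T\}=c(0)$, which you correctly identify as the main obstacle; as written the proposal does not close this gap.

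The paper's proof avoids all of this with one observation you are missing: the conformal vector $\omega_{C(\Pi(0))}=\tfrac12 c(-1)d(-1)+\omega_{fer}$ (for which $e^{ic}$ has weight $0$ and $e^{\phi}$ has weight $1$) is itself an explicit coboundary, namely $d_{\mathrm{DS}(0)}$ applied to $\tfrac12\bigl(-d(-1)\phi(-1)+\phi(-1)^{2}-\phi(-2)\bigr)e^{-c-\phi}$. Since $\operatorname{Im}d_{\mathrm{DS}(0)}$ is an ideal in $\operatorname{Ker}d_{\mathrm{DS}(0)}$, every cocycle of nonzero conformal weight equals $\tfrac1l L^{\Pi}(0)v$ and is therefore exact, so the cohomology is concentrated in conformal weight zero. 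That subspace, though infinite-dimensional, is just $\operatorname{Span}\{e^{ic},\,e^{ic-\phi}\}$, where $d_{\mathrm{DS}(0)}e^{ic-\phi}=e^{ic}+e^{(i+1)c}$ and $d_{\mathrm{DS}(0)}e^{ic}=0$, giving exactly $\C\vak$ (respectively $\C e^{\lambda(\alpha+\beta)}$). Part (3) then comes for free: $c(-1)\vak$ is a closed vector of conformal weight one, hence exact, so its zero mode $c(0)=r\,\mathrm{id}$ carries cocycles into coboundaries --- this is precisely the homotopy $T$ you were trying to build, obtained without any explicit construction. You should replace the Euler--Poincar\'e and tower-termination steps with this exactness-of-$\omega$ argument and a direct computation in weight zero.
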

\begin{proof}
Let
\begin{align*}
\omega_{C(\Pi(0))} = \frac{1}{2}  c(-1) d(-1) + \omega_{fer}.
\end{align*}
Then $\omega_{C(\Pi(0))}$ is the conformal vector of $C(\Pi(0))$ with the central charge $0$. The conformal weights of $c, d, e^c, e^{-c}, e^\phi, e^{-\phi}$ are 1, 1, 0, 0, 1, 0 respectively. Denote by $L^{\Pi}(0)$ the gradation operator with respect to $\omega_{C(\Pi(0))}$, and by $C(\Pi(0))_l$ the homogeneous subspace of $C(\Pi(0))$ with the conformal weight $l \in \Z$. Then $C(\Pi(0)) = \bigoplus_{l \in \Z} C(\Pi(0))_l$. Now, we have
\begin{align*}
\omega_{C(\Pi(0))} = d_{\mathrm{DS}(0)} \cdot
\frac{1}{2}\left(  -d(-1) \phi (-1) + \phi(-1) ^2 - \phi(-2) \right) e^{-c - \phi} \in \operatorname{Im}d_{\mathrm{DS} (0)}.
\end{align*}
Since $\operatorname{Im}d_{\mathrm{DS} (0)}$ is an ideal of $\operatorname{Ker}d_{\mathrm{DS} (0)}$, if $v$ is a vector in $\operatorname{Ker}d_{\rm{DS} (0)} \cap C(\Pi(0))_l$ with $l \neq 0$, we have
\begin{align*}
v = \frac{1}{l}L^{\Pi}(0)v \in \operatorname{Im}d_{\mathrm{DS} (0)}.
\end{align*}
Thus, $H_{\mathrm{DS}}(\Pi(0)) = H(C(\Pi(0))_0, d_{\mathrm{DS} (0)})$. Using the facts that
\begin{align*}
C(\Pi(0))_0 = \operatorname{Span}\{e^{i c}, e^{i c - \phi} \mid i \in \Z\}
\end{align*}
and that
\begin{align*}
d_{\mathrm{DS}(0)} \cdot e^{i c - \phi} = e^{i c} + e^{(i+1)c} \neq 0,\quad
d_{\mathrm{DS}(0)} \cdot e^{i c} =0,
\end{align*}
it follows that
\begin{align*}
&\operatorname{Ker}d_{\mathrm{DS} (0)} \cap C(\Pi(0))_0 = \operatorname{Span}\{e^{i c} \mid i \in \Z\},\\
&\operatorname{Im}d_{\mathrm{DS} (0)} \cap C(\Pi(0))_0 = \operatorname{Span}\{e^{i c} + e^{(i+1)c} \mid i \in \Z\}.
\end{align*}
Hence
\begin{align*}
H_{\mathrm{DS}}(\Pi(0)) = H^0_{\mathrm{DS}}(\Pi(0)) = \frac{\operatorname{Ker}d_{\mathrm{DS} (0)} \cap C(\Pi(0))_0}{\operatorname{Im}d_{\mathrm{DS} (0)} \cap C(\Pi(0))_0} = \C \vak.
\end{align*}
Similarly we get $H_{\mathrm{DS}} (\Pi(\lambda) )  = \C e ^{\lambda(\alpha+ \beta)}$. This proves assertion (1) and (2).

Next we notice that $c(-1) \in  \operatorname{Ker}d_{\mathrm{DS} (0)}$, and since it has conformal weight $1$, it is in   $\operatorname{Im}d_{\mathrm{DS} (0)}$. Since $\operatorname{Im}d_{\mathrm{DS} (0)}$ is an ideal in  $\operatorname{Ker}d_{\mathrm{DS} (0)}$, we conclude that  for each $w \in \Pi_r (\lambda)$:

$$ w = \frac{1}{r} c(0) w  \in \operatorname{Im}d_{\mathrm{DS} (0)}.$$
This proves  assertion (3).
\end{proof}

Let $c \in \C$,  $U_1$  any $V^{\mathrm{Vir}}(c,0)$--module and  $U^{2}$ is any $L ^{\mathrm{Vir}} (c,0)$--module. As above, we will identify $U_i \otimes \Pi(0)$ with a $V(\mathfrak{n})$-module only acting on the second factor $\Pi(0)$. Using Proposition \ref{pom-1}, the following is clear:

\begin{corollary} \label{notes} ${}$
\begin{enumerate}
\item Assume that $k \in {\C} \setminus \{-2 \}$ and that $U_1$ is any $V ^{\rm{Vir}} (c_k,0)$--module. Then
$$ H ^{i}_{\rm{DS}} (V ^{\rm{Vir}} (c,0) \otimes \Pi(0)) \cong  \delta_{i,0}  V^{\rm{Vir}} (c,0)$$
and  $$H ^{i}_{\rm{DS}} (U_1 \otimes \Pi(\lambda)) \cong  \delta_{i,0} U_1 $$
as $V ^{\rm{Vir}}(c_k,0)$--modules.

\item Assume that $V$ is any \voa{} extension of $V ^{\rm{Vir}} (c_k,0)$. Then as $V ^{\rm{Vir}} (c_k,0)$--modules
$$ H ^{i}  _{\rm{DS}} ( V  \otimes \Pi (0))   =\delta_{i,0}  V. $$
\end{enumerate}
\end{corollary}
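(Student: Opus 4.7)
The plan is to reduce both assertions directly to Proposition \ref{pom-1} by a Künneth-type argument. The crucial observation is that the embedding $V(\mathfrak{n}) \hookrightarrow V^{\rm{Vir}}(c_k,0) \otimes \Pi(0)$ defined by $e \mapsto 1\otimes e^c$ factors entirely through the second tensor factor. Consequently, for any $V^{\rm{Vir}}(c_k,0)$--module $U_1$ and any $\Pi(0)$--module $N$, the tensor product $U_1\otimes N$ becomes a $V(\mathfrak{n})$--module via $N$ alone, and the Drinfeld-Sokolov differential
\[
d_{\mathrm{DS}(0)} \;=\; (e+1)_{(0)} \otimes e^{\phi}_{(0)}
\]
acts on the complex $C(U_1\otimes N) = U_1 \otimes N \otimes F$ as $\mathrm{id}_{U_1} \otimes d_{\mathrm{DS}(0)}|_{N\otimes F}$. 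In particular the Virasoro action on $U_1$ commutes with $d_{\mathrm{DS}(0)}$.

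First I would establish the Künneth identity
\[
H^i_{\mathrm{DS}}(U_1\otimes N) \;\cong\; U_1 \otimes H^i_{\mathrm{DS}}(N)
\]
as $V^{\rm{Vir}}(c_k,0)$--modules. Since the differential is trivial on the $U_1$ factor and everything is a complex of $\C$--vector spaces, this is the standard Künneth theorem; the only caveat is that $C(N)$ is infinite-dimensional, but the complex is naturally $\Z$--graded by conformal weight and each graded piece is finite-dimensional, so the factorization is unambiguous.

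Applying this with $N = \Pi(\lambda)$ and invoking parts (1)--(2) of Proposition \ref{pom-1} immediately gives
\[
H^i_{\mathrm{DS}}(U_1\otimes \Pi(\lambda)) \;\cong\; \delta_{i,0}\, U_1 \otimes \C e^{\lambda(\alpha+\beta)} \;\cong\; \delta_{i,0}\, U_1,
\]
and specializing to $U_1 = V^{\rm{Vir}}(c_k,0)$, $\lambda = 0$ gives the first isomorphism of part (1). For part (2), although $V$ may be a highly non-trivial VOA extension of $V^{\rm{Vir}}(c_k,0)$, as a module over $V^{\rm{Vir}}(c_k,0)$ it is simply a (possibly very large) Virasoro module. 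Applying the same Künneth argument with $U_1 = V$ and $\lambda = 0$, and using Proposition \ref{pom-1}(1), yields
\[
H^i_{\mathrm{DS}}(V \otimes \Pi(0)) \;\cong\; V \otimes H^i_{\mathrm{DS}}(\Pi(0)) \;\cong\; \delta_{i,0}\, V
\]
as $V^{\rm{Vir}}(c_k,0)$--modules, proving part (2).

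I do not anticipate a real obstacle here beyond the book-keeping required to make the Künneth factorization honest in the graded infinite-dimensional setting; the content of the statement is already essentially contained in Proposition \ref{pom-1}, and the main role of the corollary is to promote that computation from $\Pi(0)$ to an arbitrary Virasoro module tensored with $\Pi(0)$.
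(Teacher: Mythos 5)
Your proposal is correct and is essentially the paper's own argument: the paper simply notes that $U_1\otimes\Pi(\lambda)$ is regarded as a $V(\mathfrak{n})$--module acting only on the second factor and declares the corollary "clear" from Proposition \ref{pom-1}, which is exactly the K\"unneth-type factorization you spell out. (Since the differential is $\mathrm{id}_{U_1}\otimes d_{\mathrm{DS}(0)}$ and we work over $\C$, the identity $H^i_{\mathrm{DS}}(U_1\otimes N)\cong U_1\otimes H^i_{\mathrm{DS}}(N)$ holds for vector-space reasons alone, so even your finite-dimensionality caveat is unnecessary.)
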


\subsection{Quantum Hamilton reduction of $\mathcal V^{(p)}$ and $\mathcal R^{(p)}$ }

In this section we will prove
\begin{theorem}\label{reduction}
As vertex operator algebras,
\[
H_{\mathrm{DS}}^0(\R{p}) = \B{p},
\]
and as abelian intertwining algebras,
\[
H_{\mathrm{DS}}^0(\V{p}) = \A{p}.
\]
\end{theorem}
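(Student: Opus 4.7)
The plan is to compute $H^0_{\mathrm{DS}}$ module-by-module on the decompositions of $\V{p}$ and $\R{p}$ and then reassemble. The key input is Lemma \ref{quotient}, which exhibits each simple $\LL{n}{p}$ as a submodule of $L^{\mathrm{Vir}}(c_{1,p}, h_{1, n+1}) \otimes \Pi(0)^{\frac{1}{2}}_{\epsilon(n)}$ with quotient isomorphic to the spectrally flowed module $\rho_1(\LL{n-\frac{1}{p}}{p})$, where $\epsilon(n) \equiv n \pmod{2}$. Applying the cohomological functor $H^\bullet_{\mathrm{DS}}$ to this short exact sequence and using Corollary \ref{notes} to identify the middle term's cohomology as $L^{\mathrm{Vir}}(c_{1,p}, h_{1, n+1})$ concentrated in degree zero, I obtain a long exact sequence that, combined with the vanishing claim below, collapses to give
\[
H^0_{\mathrm{DS}}(\LL{n}{p}) \cong L^{\mathrm{Vir}}(c_{1,p}, h_{1, n+1}), \qquad H^i_{\mathrm{DS}}(\LL{n}{p}) = 0 \;\; (i \neq 0).
\]

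The crucial technical step is to prove $H^\bullet_{\mathrm{DS}}(\rho_1(\LL{n-\frac{1}{p}}{p})) = 0$. I would mimic the mechanism of Proposition \ref{pom-1}(3): realize (or filter) the spectrally flowed module so that it becomes a submodule or subquotient of an object of the form $U \otimes \Pi_r(\lambda)$ with $r \neq 0$, $U$ a Virasoro module. In $\Pi_r(\lambda)$ the element $c(-1)$ already lies in $\mathrm{Im}\, d_{\mathrm{DS}(0)}$ and $c(0)$ acts as the scalar $r \neq 0$, so every vector $w$ satisfies $w = r^{-1} c(0)\cdot w \in \mathrm{Im}\, d_{\mathrm{DS}(0)}$, forcing the cohomology to vanish. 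Equivalently, spectral flow by $\rho_1$ shifts the modes of $e(z)=e^c(z)$ so that the zero mode $e^c(0)$ acts invertibly on the new top level; this makes the leading piece of $d_{\mathrm{DS}(0)} = (e+1) \otimes e^\phi$ invertible with respect to a suitable conformal-weight filtration, yielding exactness via the standard spectral sequence argument.

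Granting this vanishing, the whole construction is $\sltwo$-equivariant (the action of $\sltwo$ from Corollary \ref{sl2der} commutes with $\asltwo$ and hence with $d_{\mathrm{DS}(0)}$), so summing over $n$ using Theorem \ref{thm:Vp-dec} and the decomposition \eqref{eq:decAp} of $\A{p}$ gives
\[
H^0_{\mathrm{DS}}(\V{p}) \cong \bigoplus_{n=0}^\infty \rho_n \otimes L^{\mathrm{Vir}}(c_{1,p}, h_{1, n+1}) \cong \A{p}.
\]
Because $d_{\mathrm{DS}(0)}$ acts as a derivation of the OPE, $H^0_{\mathrm{DS}}$ inherits an abelian intertwining algebra structure, and I would verify that the generators $\tau^{\pm}_{(p)}, \bar\tau^{\pm}_{(p)}$ of \eqref{eq:genVp} map to cohomology classes realising the doublet generators $a^{\pm}$ of $\A{p}$; this pins down the isomorphism as abelian intertwining algebras. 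For $\R{p}$, the same strategy applies to its decomposition $\R{p} \cong \bigoplus_{\ell \in \Z}\bigoplus_{s \geq 0} \LL{|\ell|+2s}{p} \otimes M_\varphi(1, -\ell)$, since the Heisenberg factor $M_\varphi(1)$ commutes with $d_{\mathrm{DS}(0)}$ and passes through unchanged; the result is precisely the decomposition of $\B{p}$.

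The main obstacle is the vanishing of $H^\bullet_{\mathrm{DS}}$ on the twisted modules $\rho_1(\LL{n-\frac{1}{p}}{p})$. While the underlying mechanism is transparent from Proposition \ref{pom-1}(3), constructing the explicit embedding (or filtration) of these relaxed/spectrally flowed $\lksl$-modules in such a way that Proposition \ref{pom-1}(3) applies requires careful bookkeeping of spectral flow conventions and of the free-field realization; this is the only genuinely new computation needed.
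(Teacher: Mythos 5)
Your overall architecture is workable, but two steps need more than you give them, and one of them is a genuine obstruction as you have set it up. First, the vanishing $H^{\bullet}_{\mathrm{DS}}\bigl(\rho_1(\LL{n-\frac{1}{p}}{p})\bigr)=0$ cannot be obtained from the mechanism of Proposition \ref{pom-1}(3) in the way you suggest: that argument requires $c(0)$ to act by a \emph{nonzero} scalar $r$, i.e.\ it applies to $U\otimes \Pi_r(\lambda)$ with $r\neq 0$, whereas the ambient modules in Lemma \ref{quotient} are $L^{\mathrm{Vir}}(c_{1,p},h_{1,s+1})\otimes \Pi(0)^{\frac{1}{2}}_{\epsilon}=L^{\mathrm{Vir}}(c_{1,p},h_{1,s+1})\otimes\Pi_0(\tfrac{\epsilon}{2})$, on which $c(0)$ acts by zero; the quotient $\rho_1(\LL{s-\frac{1}{p}}{p})$ is therefore not visibly a subquotient of anything with $r\neq 0$, and producing such a realization is a nontrivial new computation, as you yourself concede. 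The paper sidesteps this entirely: the module-level statement $H^{i}_{\mathrm{DS}}(\LL{s}{p})=\delta_{i,0}L^{\mathrm{Vir}}(c_{1,p},h_{1,s+1})$ is quoted from Arakawa's vanishing theorems (Lemma \ref{reduction1}), so Lemma \ref{quotient} is never needed for the reduction and your hardest step can be replaced by a citation.

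Second, and more importantly, ``generators map to generators'' under a module isomorphism does not pin down the algebra structure; you need an actual homomorphism of abelian intertwining algebras between $H^{0}_{\mathrm{DS}}(\V{p})$ and $\A{p}$. The paper manufactures one from the short exact sequence $0\to\V{p}\to(\A{p}\otimes\Pi(0)^{\frac{1}{2}})^{\Z_2}\to\mathrm{Im}(S)\to 0$ of Proposition \ref{prop:VpviaAp}: applying $H^{\bullet}_{\mathrm{DS}}$ and using Corollary \ref{notes} to identify the reduction of the middle term with $\A{p}$ yields an algebra map $\phi\colon H^{0}_{\mathrm{DS}}(\V{p})\to\A{p}$, and the bulk of the paper's proof (Lemma \ref{lemma1}) is devoted to showing $\phi$ is bijective by decomposing the embedding into the components $\widetilde\phi_{s,j}$ of Proposition \ref{prop:Ls-in-LVir} and checking that the classes of $v_{1,s+1,j}\otimes e^{\frac{s}{2}c}$ survive in cohomology. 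Your verification on $\tau^{\pm}_{(p)},\overline{\tau}^{\pm}_{(p)}$ becomes a proof only if you route it through such a composite: their images under $\V{p}\hookrightarrow\A{p}\otimes\Pi(0)^{\frac{1}{2}}\to H^{0}_{\mathrm{DS}}(\A{p}\otimes\Pi(0)^{\frac{1}{2}})$ are the doublet generators $a^{\pm}$, which gives surjectivity of an algebra map, and combined with the equality of graded characters from the module computation this yields bijectivity. As written, the algebra homomorphism itself is the missing ingredient rather than a routine verification. Your treatment of $\R{p}$ versus $\B{p}$ is then indeed parallel, exactly as in the paper.
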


First we need to recall some known statements
\begin{lemma}\label{reduction1}
As vertex operator algebras,
\[
H_{\mathrm{DS}}^0(V^k(\sltwo)) \cong V^{\mathrm{Vir}}(c_{1,p}, 0).
\]
As $V^{\mathrm{Vir}}(c_{1,p}, 0)$-modules
\begin{align*}
H_{\mathrm{DS}}^0(\mathcal{L}_s^{(p)}) \cong L^{\mathrm{Vir}} (c_{1,p},  h_{1,s+1} );\quad
H_{\mathrm{DS}}^i(\mathcal{L}_s^{(p)}) = 0,\ i \neq 0
\end{align*}
and
\begin{align*}
&H_{\rm{DS}}^0(\R{p}) \cong \B{p},\;\;\; H_{\rm{DS}}^0(\V{p}) \cong \A{p},\quad
&H_{\rm{DS}}^i(\R{p})=H_{\rm{DS}}^i(\V{p})=0,\ i \neq 0.
\end{align*}
\end{lemma}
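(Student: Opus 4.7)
The plan is to upgrade the module-level cohomology computations recorded in Lemma \ref{reduction1} to structural isomorphisms by invoking the uniqueness results of Section \ref{sec:6}. Since $\R{p}$ is a vertex operator algebra and $\V{p}$ is an abelian intertwining algebra, the cohomologies $H_{\mathrm{DS}}^0(\R{p})$ and $H_{\mathrm{DS}}^0(\V{p})$ automatically inherit structures of the same type, so the task reduces to recognising them as $\B{p}$ and $\A{p}$ respectively.

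For the identification $H_{\mathrm{DS}}^0(\R{p}) \cong \B{p}$, I would first observe that the Heisenberg subalgebra $M_\varphi(1) \subset \R{p}$ commutes with $e(z) \in \lksl$ and hence descends to a Heisenberg subalgebra of the cohomology; together with the subalgebra $H_{\mathrm{DS}}^0(\lksl) = H_{\mathrm{DS}}^0(\LL{0}{p}) \cong L^{\mathrm{Vir}}(c_{1,p},0)$ supplied by Lemma \ref{reduction1}, this realises $H_{\mathrm{DS}}^0(\R{p})$ as an extension of $L^{\mathrm{Vir}}(c_{1,p},0) \otimes M(1)$. Applying $H_{\mathrm{DS}}^0$ termwise to the decomposition
\[
\R{p} \cong \bigoplus_{\ell \in \Z}\bigoplus_{s \geq 0} \LL{|\ell|+2s}{p} \otimes M_\varphi(1,-\ell),
\]
which is legitimate because $H_{\mathrm{DS}}^i(\LL{n}{p}) = 0$ for $i \neq 0$, then reproduces the decomposition of $\B{p}$ as a module over $L^{\mathrm{Vir}}(c_{1,p},0) \otimes M(1)$.

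The main obstacle is to verify that $H_{\mathrm{DS}}^0(\R{p})$ is simple as a vertex operator algebra, for then Corollary \ref{cor:uniquessBp} yields the desired isomorphism at once. I would establish simplicity by essentially repeating the argument of Proposition \ref{simple}: any nonzero ideal $I$ is stable under the Virasoro and Heisenberg actions, hence decomposes as a subsum of the simple modules above, and the cohomology classes of the weight-$p/2$ generators $e_{\alpha_i,p}, f_{\alpha_i,p}$ of $\R{p}$ act nontrivially (using the cyclicity established in Lemma \ref{quotient}) to shift between summands until $\mathbf{1} \in I$.

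For the abelian intertwining algebra statement $H_{\mathrm{DS}}^0(\V{p}) \cong \A{p}$, I would follow the same template but replace the final uniqueness input by a two-step argument. Taking $\Z_2$-orbifolds, $(H_{\mathrm{DS}}^0(\V{p}))^{\Z_2} = H_{\mathrm{DS}}^0((\V{p})^{\Z_2})$ is a simple vertex operator algebra of central charge $c_{1,p}$ with a $PSL(2,\C)$-action (inherited from the $SL(2,\C)$-action of Theorem \ref{thm;sl2decomp}) whose $U(1)$-graded character matches that of the triplet $\W{p}$; the triplet uniqueness corollary stated at the end of Section \ref{sec:6} then gives $(H_{\mathrm{DS}}^0(\V{p}))^{\Z_2} \cong \W{p}$ as vertex operator algebras. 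Since $\A{p}$ is uniquely determined as the simple-current extension of $\W{p}$ by $\A{p}_1$ (the uniqueness of simple-current extensions to $\tfrac{1}{2}\Z$-graded structures invoked in the proof of Corollary \ref{cor:uniquessRp}), this lifts to the required abelian intertwining algebra isomorphism, the simplicity of $H_{\mathrm{DS}}^0(\V{p})$ being handled by the same ideal argument as in the $\R{p}$ case.
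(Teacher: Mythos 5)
The statement you are asked to prove is a module-level computation: apart from the first isomorphism (which is as vertex operator algebras and is the Feigin--Frenkel theorem), everything in Lemma \ref{reduction1} is asserted only as an isomorphism of $V^{\mathrm{Vir}}(c_{1,p},0)$-modules, together with the vanishing of the cohomologies in nonzero degree. The structural identification of $H^0_{\mathrm{DS}}(\R{p})$ with $\B{p}$ as a vertex operator algebra, and of $H^0_{\mathrm{DS}}(\V{p})$ with $\A{p}$ as an abelian intertwining algebra, is the content of Theorem \ref{reduction}, which the paper proves separately --- \emph{using} Lemma \ref{reduction1} as an input --- via the long exact sequence attached to $0 \to \V{p}\to (\A{p}\otimes\Pi(0)^{\frac{1}{2}})^{\Z_2}\to \mathrm{Im}(S)\to 0$. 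Your proposal is aimed at that later theorem, and as a proof of the lemma it is circular: you open by saying you will ``upgrade the module-level cohomology computations recorded in Lemma \ref{reduction1}'', i.e.\ you take as given precisely what you are supposed to establish. The real content of the lemma --- that $H^0_{\mathrm{DS}}(V^k(\sltwo))\cong V^{\mathrm{Vir}}(c_{1,p},0)$ and that $H^i_{\mathrm{DS}}(\LL{s}{p})=\delta_{i,0}\,L^{\mathrm{Vir}}(c_{1,p},h_{1,s+1})$ --- is nowhere addressed. These facts do not follow from the tensor-category or uniqueness machinery of Section \ref{sec:6}; in the paper they come from \cite{FF90} and from Arakawa's theorems (Theorems 6.7.1 and 6.7.4 of \cite{Ar05}) on the reduction of admissible highest-weight modules, and some such external input (or an Euler-character-plus-vanishing argument) is unavoidable. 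Once those are in hand, the statements about $\R{p}$, $\B{p}$, $\V{p}$, $\A{p}$ follow by applying $H^\bullet_{\mathrm{DS}}$ termwise to the known Virasoro-module decompositions; that termwise step is the one part of your write-up that matches the paper's proof.

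A secondary remark: even read as an attack on Theorem \ref{reduction}, your route has a gap. Corollary \ref{cor:uniquessBp} requires the candidate algebra to be \emph{simple}, and simplicity of $H^0_{\mathrm{DS}}(\R{p})$ is not automatic, since quantum Hamiltonian reduction need not preserve simplicity. Your plan to ``repeat the argument of Proposition \ref{simple}'' does not transfer: that argument lives inside the explicit embedding $\V{p}\subset \A{p}\otimes\Pi(0)^{\frac{1}{2}}$ and uses Lemma \ref{ker} on singular vectors there, structures that are not available on the cohomology a priori. The paper sidesteps this by producing an explicit homomorphism $\phi\colon H^0_{\mathrm{DS}}(\V{p})\to\A{p}$ of abelian intertwining algebras from the long exact sequence and verifying summand by summand that it is an isomorphism.
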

\begin{proof}
The first statement follows from \cite{FF90}, the second one follows from Theorem 6.7.1 and Theorem 6.7.4 in \cite{Ar05}, and the last one follows from the decomposition of $\R{p}$, $\B{p}$, $\V{p}$ and $\A{p}$ as Virasoro algebra $V^{\mathrm{Vir}}(c_{1,p}, 0)$-modules. See also Proposition 5.10 of \cite{C}.
\end{proof}

By Proposition \ref{prop:VpviaAp}, we have the short exact sequence of $\V{p}$-modules:
\begin{align}\label{short}
0 \rightarrow \V{p} \xrightarrow{\widetilde{\phi}} (\A{p} \otimes \Pi(0)^{\frac{1}{2}})^{\Z_2} \stackrel{S}{\rightarrow} \mathrm{Im}(S) \rightarrow 0.
\end{align}

Then the cohomology functor $H^\bullet_{\mathrm{DS}}(?)$ yields the long exact sequence of $H^0_{\mathrm{DS}}(\V{p}$-modules from the exact sequence \eqref{short}. From Lemma \ref{reduction1} as Virasoro algebra $V^{\mathrm{Vir}}(c_{1,p}, 0)$-modules
\[
H^i_{\mathrm{DS}}(\V{p}) \cong \delta_{i,0}\A{p}.
\]
Also, it follows from  Corollary  \ref{notes} and both of equations \eqref{eq:ApPiZ2-dec} and $\Pi(0)^{\frac{1}{2}}_1 = \Pi(\frac{1}{2})$ that as Virasoro algebra $V^{\mathrm{Vir}}(c_{1,p}, 0)$-modules
\[
H^i_{\mathrm{DS}}\left( (\A{p}\otimes \Pi(0)^{\frac{1}{2}})^{\Z_2}\right) \cong \delta_{i,0}\A{p}.
\]
Thus, the long exact sequence induced from \eqref{short} gives rise to the exact sequence of $V^{\rm{Vir}}(c_{1,p}, 0)$-modules
\begin{align}\label{long}
0 \rightarrow H^{-1}_{\mathrm{DS}}(\mathrm{Im}(S)) \rightarrow H^0_{\mathrm{DS}}(\V{p}) \stackrel{\phi}{\rightarrow} H^0_{\mathrm{DS}}((\A{p}\otimes \Pi(0)^{\frac{1}{2}})^{\Z_2}) \rightarrow H^{0}_{\mathrm{DS}}(\mathrm{Im}(S)) \rightarrow 0,
\end{align}
and the vanishing results
\begin{align}\label{vanish-ImS}
H^{i}_{DS}(\operatorname{Im}(S)) = 0,\ i \neq 0, -1.
\end{align}
$\phi$ is a homomorphism $\phi: \A{p} \rightarrow \A{p}$ of abelian intertwining algebras and it actually is an isomorphism:
\begin{lemma}\label{lemma1}
$\phi: \A{p} \rightarrow \A{p}$ is an isomorphism of abelian intertwining algebras.
\end{lemma}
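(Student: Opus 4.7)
The plan is to deduce that $\phi$ is an isomorphism from three facts: $\phi$ is a non-zero homomorphism of abelian intertwining algebras, $\A{p}$ is simple, and the graded characters of source and target agree. First I would note that the inclusion $\widetilde{\phi}: \V{p} \hookrightarrow (\A{p} \otimes \Pi(0)^{\frac{1}{2}})^{\Z_2}$ is a homomorphism of abelian intertwining algebras that sends the vacuum to the vacuum. Since $H^0_{\mathrm{DS}}$ is a cohomological functor which, when applied to a vertex algebra extension of $V(\mathfrak{n})$, produces a vertex algebra (or more generally an abelian intertwining algebra) and takes algebra maps to algebra maps, the induced map $\phi = H^0_{\mathrm{DS}}(\widetilde{\phi})$ is a homomorphism of abelian intertwining algebras with $\phi(\vak) = \vak$. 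In particular $\phi \neq 0$.

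Next I would use simplicity to upgrade non-vanishing to injectivity. The doublet $\A{p}$ is a simple abelian intertwining algebra, by an argument parallel to Proposition \ref{simple} (and already recorded in \cite{AdM-doublet}). Any non-zero homomorphism of abelian intertwining algebras out of a simple object has trivial kernel, so $\phi$ is injective. Equivalently, through the long exact sequence \eqref{long}, injectivity of $\phi$ corresponds to the vanishing $H^{-1}_{\mathrm{DS}}(\mathrm{Im}(S)) = 0$.

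For surjectivity, I would compare graded characters. The functor $H^0_{\mathrm{DS}}$ intertwines the (shifted) Virasoro action on a $\lksl$-module with the Virasoro action on the resulting Virasoro module, so $\phi$ is a graded linear map with respect to conformal weight. Both source and target are $\A{p}$, and by the decomposition \eqref{eq:decAp}
\[
\A{p} = \bigoplus_{n = 0}^{\infty} \rho_n \otimes L^{\rm{Vir}}(c_{1,p}, h_{1, n+1}),
\]
each conformal-weight graded piece is finite dimensional with a graded dimension that matches on both sides. An injection between graded vector spaces with finite-dimensional graded pieces of matching dimensions is bijective on each piece, so $\phi$ is surjective and hence an isomorphism of abelian intertwining algebras.

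The main obstacle I anticipate is the first step: verifying carefully that $\phi$ is genuinely a homomorphism of abelian intertwining algebras rather than merely a linear map, and that it preserves conformal weight so that the graded-character comparison is legitimate. This amounts to the standard compatibility of the BRST differential $d_{\mathrm{DS}(0)}$ with vertex operator products, together with the fact that the conformal vector of the complex $C(V) = V \otimes F$ is built from the Sugawara vector of $V$ and $\omega_{\text{fer}}$ in a way that descends to $H^0_{\mathrm{DS}}$. Once these structural points are in place, simplicity and graded-dimension matching finish the argument.
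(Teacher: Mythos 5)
Your strategy (nonzero homomorphism out of a simple algebra is injective; injectivity plus equality of graded characters with finite-dimensional graded pieces gives surjectivity) would work \emph{if} the source of $\phi$ were known to be simple as an abelian intertwining algebra. But it is not known at this stage, and this is a genuine gap. The map $\phi$ goes from $H^0_{\mathrm{DS}}(\V{p})$ to $H^0_{\mathrm{DS}}((\A{p}\otimes\Pi(0)^{\frac{1}{2}})^{\Z_2})$. The target is identified with $\A{p}$ as an algebra via Corollary \ref{notes}, but the source is only identified with $\A{p}$ as a $V^{\mathrm{Vir}}(c_{1,p},0)$-module (Lemma \ref{reduction1} is proved by a character/decomposition argument); upgrading that identification to an algebra isomorphism is precisely the content of Lemma \ref{lemma1} and of Theorem \ref{reduction}. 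The simplicity of $\A{p}$ recorded in \cite{AdM-doublet} therefore does not transfer to the source, and simplicity of a quantum Hamiltonian reduction of a simple algebra is not automatic (it is a well-known open problem in general whether $H_{\mathrm{DS}}$ of a simple affine vertex algebra is simple). Simplicity of the \emph{target} does not help either: the kernel of $\phi$ is an ideal of the source, not of the target, and the image is only a subalgebra of $\A{p}$, of which there are many proper ones (e.g. $\W{p}$ or the Virasoro subalgebra), so you cannot extract surjectivity from it. Your remark that injectivity is equivalent to $H^{-1}_{\mathrm{DS}}(\mathrm{Im}(S))=0$ is correct but does not close the gap, since that vanishing is derived in the paper as a \emph{consequence} of the lemma, not proved independently.

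The part of your argument that does survive is the reduction to injectivity: since both sides have the graded character of $\A{p}$ with finite-dimensional weight spaces, an injective weight-preserving map is automatically bijective. What is missing is a replacement for the simplicity step, and this is exactly where the paper does the real work: it decomposes $\widetilde{\phi}$ as $\bigoplus_{s,j}\widetilde{\phi}_{s,j}$ using the $\sltwo$-weight spaces $\rho_{s,j}$ and Proposition \ref{prop:Ls-in-LVir}, and then shows each induced map $\phi_{s,j}\colon H^0_{\mathrm{DS}}(\LL{s}{p})\to L^{\mathrm{Vir}}(c_{1,p},h_{1,s+1})$ is nonzero by exhibiting the explicit cocycle $v_{1,s+1,j}\otimes e^{\frac{s}{2}c}$, which is $d_{\mathrm{DS}(0)}$-closed and survives in cohomology by the computation in Proposition \ref{pom-1}. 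A nonzero map between irreducible Virasoro modules is an isomorphism, and summing over $(s,j)$ gives the lemma. Some explicit cocycle computation of this kind (or an independent proof that $H^0_{\mathrm{DS}}(\V{p})$ is simple, which amounts to comparable work with the generators \eqref{eq:genVp}) is unavoidable; your proposal as written does not supply it.
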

\begin{proof}
As $\widetilde{\phi}$ in \eqref{short} is a homomorphism of abelian intertwining algebras, so is $\phi$. Thus, it is enough to show that $\phi$ is an isomorphism of $L^{\rm{Vir}}(c_{1,p}, 0)$-modules. Notice that $\Pi\left(\frac{s}{2}\right)$ for $s \in \Z$ is isomorphic to $\Pi(0)=\Pi(0)^{\frac{1}{2}}_0$ (resp.  $\Pi\left(\frac{1}{2}\right) = \Pi(0)^{\frac{1}{2}}_1$) as a $\Pi(0)$-module if $s$ is even (resp. odd). First, the injective map $\widetilde{\phi}_{s, j} \colon \mathcal{L}_s^{(p)} \hookrightarrow L^{\rm{Vir}} (c_{1,p},  h_{1,s+1} ) \otimes \Pi(\frac{s}{2})$ given in Proposition \ref{prop:Ls-in-LVir} yields an exact sequence
\begin{align}\label{eq:summand for short}
0 \rightarrow \mathcal{L}_s^{(p)} \xrightarrow{\widetilde{\phi}_{s, j}} L^{\rm{Vir}} (c_{1,p},  h_{1,s+1} ) \otimes \Pi\left(\frac{s}{2}\right) \xrightarrow{S_s} \operatorname{Im}S_s \rightarrow 0,
\end{align}
where $\operatorname{Im}S_s = \left( L^{\rm{Vir}} (c_{1,p},  h_{1,s+1} ) \otimes \Pi\left(\frac{s}{2}\right) \right)/\mathcal{L}_s^{(p)}$ and $S_s$ is the canonical projection. Consider the map
\begin{align*}
\phi_{s, j} \colon H^0_{\mathrm{DS}}(\mathcal{L}_s^{(p)}) \rightarrow L^{\mathrm{Vir}} (c_{1,p},  h_{1,s+1} )
\end{align*}
induced from $\widetilde{\phi}_{s, j}$ through the Drinfeld-Sokolov reduction functor $H_{\rm{DS}}(?)$. By Proposition \ref{pom-1}, we have
\begin{align*}
H^i_{\mathrm{DS}}(L^{\mathrm{Vir}} (c_{1,p},  h_{1,s+1} ) \otimes \Pi\left(\frac{s}{2}\right) )
= L^{\mathrm{Vir}} (c_{1,p},  h_{1,s+1} ) \otimes H^i_{\mathrm{DS}}( \Pi\left(\frac{s}{2}\right) )\\
= \delta_{i,0}L^{\mathrm{Vir}} (c_{1,p},  h_{1,s+1} ) = H^i_{\mathrm{DS}}(\mathcal{L}_s^{(p)}).
\end{align*}
Recall that $v_{1, s+1, j} \otimes e^{\frac{s}{2} c}$ is the image of the highest weight vector of $\mathcal{L}_s^{(p)}$ by $\widetilde{\phi}_{s, j}$. Since
\begin{align*}
d_{\rm{DS} (0)} \cdot ( v_{1, s+1, j} \otimes e^{\frac{s}{2} c}) = 0
\end{align*}
for the differential $d_{\rm{DS} (0)}$ of the Drinfeld-Sokolov reduction for $L^{\rm{Vir}} (c_{1,p},  h_{1,s+1} ) \otimes \Pi\left(\frac{s}{2}\right)$, the vector $v_{1, s+1, j} \otimes e^{\frac{s}{2} c}$ is in $\operatorname{Ker} d_{\rm{DS} (0)}$ with degree $0$. Moreover, the vector $v_{1, s+1, j} \otimes e^{\frac{s}{2} c}$ is non-zero in $H^0_{\rm{DS}}(L^{\rm{Vir}} (c_{1,p},  h_{1,s+1} ) \otimes \Pi\left(\frac{s}{2}\right))$ by the proof of Proposition \ref{pom-1}. Thus, the map $\phi_{s, j}$ is a non-trivial homomorphism of $L^{\rm{Vir}}(c_{1,p}, 0)$-modules, and so is an isomorphism. Now, we have the decompositions
\begin{align*}
\V{p} = \bigoplus_{s = 0}^\infty \rho_s \otimes \mathcal{L}_s^{(p)},\quad
\A{p}\otimes \Pi\left(\frac{s}{2}\right) = \bigoplus_{s = 0}^\infty \rho_s \otimes L^{\rm{Vir}} (c_{1,p},  h_{1,s+1} ) \otimes \Pi\left(\frac{s}{2}\right)
\end{align*}
as $\sltwo \otimes \lksl$-modules. Set the projections
\begin{align*}
&p_s \colon \V{p} \twoheadrightarrow \rho_s \otimes \mathcal{L}_s^{(p)},\\
&q_s \colon \A{p} \otimes \Pi\left(\frac{s}{2}\right) \twoheadrightarrow \rho_s \otimes L^{\rm{Vir}} (c_{1,p},  h_{1,s+1} ) \otimes \Pi\left(\frac{s}{2}\right),\\
&r_{s, j} \colon \rho_s \twoheadrightarrow \rho_{s, j},
\end{align*}
where $\rho_{s, j}$ is the weight space of $\rho_s$ with the weight $2j - s$ for $j = 0, \ldots, s$, and let
\begin{align*}
&p_{s, j} = (r_{s, j}\otimes\operatorname{id}_{ \mathcal{L}_s^{(p)} }) \circ p_s,\\
&q_{s, j} = (r_{s, j}\otimes\operatorname{id}_{L^{\rm{Vir}} (c_{1,p},  h_{1,s+1} ) \otimes \Pi\left(\frac{s}{2}\right),}) \circ q_s.
\end{align*}
Using Proposition \ref{prop:Ls-in-LVir}, Lemma \ref{ker} and the proof of Theorem \ref{thm:Vp-dec}, it follows that the top space of $\widetilde{\phi}(\rho_{s, j} \otimes \mathcal{L}_s^{(p)})$ is spanned by $v_{1, s+1, j} \otimes e^{\frac{s}{2} c}$ so that $\widetilde{\phi}(\rho_{s, j} \otimes \mathcal{L}_s^{(p)})$ is embedded into $\rho_{s, j} \otimes L^{\rm{Vir}} (c_{1,p},  h_{1,s+1} ) \otimes \Pi\left(\frac{s}{2}\right)$. Thus, the restriction of the map $\widetilde{\phi}$ to $\rho_{s, j} \otimes \mathcal{L}_s^{(p)}$ factors uniquely through $\widetilde{\phi}_{s, j}$, i.e.
\begin{align*}
\widetilde{\phi}_{s, j} \circ p_{s, j} = q_{s, j} \circ \widetilde{\phi},\quad
s \in \Z_{\geq 0}.
\end{align*}
Hence, the exact sequence \eqref{short} consists of the direct sum of the exact sequence \eqref{eq:summand for short} for all $s, j$. We conclude that
\begin{align*}
\widetilde{\phi} = \bigoplus_{s=0}^\infty \bigoplus_{j = 0}^s \widetilde{\phi}_{s, j},
\end{align*}
which implies that
\begin{align*}
\phi = \bigoplus_{s=0}^\infty \bigoplus_{j = 0}^s \phi_{s, j}.
\end{align*}
As each $\phi_{s, j}$ is an isomorphism of $L^{\rm{Vir}}(c_{1,p}, 0)$-modules, so is $\phi$. This completes the proof.
\end{proof}

As a consequence of Lemma \ref{lemma1}, the exact sequence \eqref{long} and the formulae \eqref{vanish-ImS}, we have
\[
H_{\mathrm{DS}}(\operatorname{Im}(S)) = 0.
\]
Again from the long exact sequence (\ref{long}), the vertex operator algebra homomorphism $\phi$ is an isomorphism. We proved Theorem \ref{reduction} for the algebra $\V{p}$, the proof for $\R{p}$ is similar.

\subsection{Reduction of $L_1 (\mathfrak{psl}(2 \vert 2))$}
\label{red-psl22}
Recall that $$ L_1 (\mathfrak{psl}(2 \vert 2)) = \V{1} _0 \otimes L_1 (\sltwo) \bigoplus  \V{1} _1 \otimes L_1 (\omega_1) . $$
Since $$H_{\rm{DS}} ( \V{1}_0)  = \A{1}_0 = L_1(\sltwo), \quad  H_{\rm{DS}} ( \V{1}_1)  = \A{1} _1 = L_1(\omega_1), $$
we get
$$ H_{\rm{DS}}   ( L_1 (\mathfrak{psl}(2 \vert 2))  ) \cong L_1(\sltwo) ^{\otimes 2} \ \oplus \ L_1(\omega_1 ) ^{\otimes 2}  \cong F_ 1 ^{\otimes 2},$$
where $F_1$ is the Clifford vertex algebra ($bc $ system) of central charge $c=1$.




\subsection*{Acknowledgments}

This work was done in part during the visit of D.A. to the University of Alberta. T.C. appreciates the many discussions with Boris Feigin.

 D.A.   is   partially
supported   by the
QuantiXLie Centre of Excellence, a project cofinanced
by the Croatian Government and European Union
through the European Regional Development Fund - the
Competitiveness and Cohesion Operational Programme
(KK.01.1.1.01.0004).

T. C is supported by NSERC $\#$RES0020460.

N. G is supported by JSPS Overseas Research Fellowships.

\vskip10pt {\footnotesize{}{ }\textbf{\footnotesize{}D.A.}{\footnotesize{}:
Department of Mathematics, Faculty of Science, University of Zagreb, Bijeni\v{c}ka 30,
10 000 Zagreb, Croatia; }\texttt{\footnotesize{}adamovic@math.hr}{\footnotesize \par}

\vskip10pt {\footnotesize{}{ }\textbf{\footnotesize{}T.C.}{\footnotesize{}:
Department of Mathematical and Statistical Sciences, University of Alberta, 632 CAB, Edmonton, Alberta, Canada T6G 2G1; }\texttt{\footnotesize{}creutzig@ualberta.ca}{\footnotesize \par}

\vskip10pt {\footnotesize{}{ }\textbf{\footnotesize{}N.G.}{\footnotesize{}:
Department of Mathematical and Statistical Sciences, University of Alberta, 632 CAB, Edmonton, Alberta, Canada T6G 2G1; }\texttt{\footnotesize{}genra@ualberta.ca}{\footnotesize \par}

 \vskip10pt {\footnotesize{}{ }\textbf{\footnotesize{}J.Y.}{\footnotesize{}:
 Department of Mathematical and Statistical Sciences, University of Alberta, 632 CAB, Edmonton, Alberta, Canada T6G 2G1; }\texttt{\footnotesize{}jinwei2@ualberta.ca}{\footnotesize \par}
\end{document}